\theoremstyle{plain}
\newtheorem{theo}{Theorem}[section]
\newtheorem{prop}[theo]{Proposition}
\newtheorem{lemm}[theo]{Lemma}
\newtheorem{coro}[theo]{Corollary}
\newtheorem{defi}[theo]{Definition}
\theoremstyle{definition}
\newtheorem{rema}[theo]{Remark}
\newtheorem{nota}[theo]{Notation}
\newtheorem{exam}[theo]{Example}
\DeclareMathOperator{\supp}{supp}
\DeclareSymbolFont{pletters}{OT1}{cmr}{m}{sl}
\DeclareMathSymbol{s}{\mathalpha}{pletters}{`s}
\def\defn{\mathrel{:=}}
\def\eps{\varepsilon}
\def\la{\left\lvert}
\def\lA{\left\lVert}
\def\le{\leq}
\def\les{\lesssim}
\def\mez{\frac{1}{2}}
\def\mezl{\frac{3}{2}}
\def\partialx{\nabla}
\def\ra{\right\rvert}
\def\rA{\right\rVert}
\def\tdm{\frac{3}{2}}
\def\xC{\mathbf{C}}
\def\xN{\mathbf{N}}
\def\xR{\mathbf{R}}
\def\xS{\mathbf{S}}
\def\xZ{\mathbf{Z}}
\def\ld{\lambda}
\def\ep{\varepsilon}
\def\cF{\mathcal{F}}
\newcommand{\bq}{\begin{equation}}
\newcommand{\eq}{\end{equation}}
\newcommand{\bqa}{\begin{eqnarray*}}
\newcommand{\eqa}{\end{eqnarray*}}
\newcommand{\hk}{\hspace*{.24in}}
\numberwithin{equation}{section}
\title{A pseudo-local property of gravity water waves system}
\author{
Quang-Huy Nguyen
\address{Quang Huy Nguyen. Laboratoire de Math\'ematiques d'Orsay, UMR 8628 du CNRS, Universit\'e Paris-Sud, 91405 Orsay Cedex, France}
}
\date{\empty}
\begin{document}
\begin{abstract}
By proving a weighted contraction estimate in uniformly local Sobolev spaces for the flow of gravity water waves, we show that this  nonlocal system is in fact pseudo-local in the following sense: locally in time, the dynamic far away from a given bounded region has a small effect on that region (again, in a sense that we will make precise in the article). Our estimate on the flow also implies a new spatial decay property of the waves. To prove this result, we establish a  paradifferential calculus theory in uniformly local Sobolev spaces with weights. 
\end{abstract} 
\thanks{The author was supported in part by Agence Nationale de la Recherche
  project  ANA\'E ANR-13-BS01-0010-03.}
\maketitle
\section{Introduction}
\subsection{The problem} 
We consider an incompressible, irrotational, inviscid fluid moving in a domain $\Omega$ underneath a free surface described by $\eta$ and above a bottom described by a  given function $\eta_*$, which is assumed to be bounded and continuous. Namely,
\bq\label{defi:domain}
\Omega = \{(t,x,y) \in[0,T] \times \xR^d \times \xR: \eta_* (x)< y < \eta(t,x)\} .
\eq
We also denote by $\Sigma$ the free surface and by $\Gamma$ the bottom,
 \begin{align*}
 \Sigma &= \{(t,x,y) \in [0,T] \times \xR^d \times \xR:  y= \eta(t,x))\},\\
 \Gamma &=  \{(x,y) \in \xR^d \times \xR: y= \eta_*(x)\}.
 \end{align*}
The velocity filed $v$ admits a potential $\phi: \Omega \to \xR$ 
such that $v = \nabla_{x,y} \phi$ and $\Delta_{x,y} \phi = 0$ in $\Omega$. We introduce the trace of the potential on the surface
$$
\psi(t,x)= \phi(t,x,\eta(t,x))
$$
and the Dirichlet-Neumann operator 
\bq\label{def:G}
\begin{aligned}
G(\eta) \psi &= \sqrt{1 + \vert \nabla_x \eta \vert ^2}
\Big( \frac{\partial \phi}{\partial n} \Big \arrowvert_{\Sigma}\Big)\\
&= (\partial_y \phi)(t,x,\eta(t,x)) - \nabla_x \eta(t,x) \cdot(\nabla_x \phi)(t,x,\eta(t,x)).
\end{aligned}
\eq
Then (see \cite{LannesLivre}) the gravity water waves system in the Zakharov/Craig--Sulem formulation reads as follows
\begin{equation}\label{ww}
\left\{
\begin{aligned}
\partial_t \eta &= G(\eta) \psi,\\
\partial_t \psi &= - \mez \vert \nabla_x \psi \vert^2 + \mez \frac{(\nabla_x \eta \cdot \nabla_x \psi + G(\eta)\psi)^2}{1+ \vert \nabla_x \eta \vert^2} - g \eta
\end{aligned}
\right.
\end{equation}
where $g$ is the acceleration of gravity.\\
Following \cite{ABZ2} we shall consider the vertical and horizontal components of the velocity on the free surface as unknowns which can be expressed in terms of $\eta$ and $\psi$:
\begin{equation}\label{BV}
B = (v_y)\arrowvert_\Sigma = \frac{ \nabla_x \eta \cdot \nabla_x \psi + G(\eta)\psi} {1+ \vert \nabla_x \eta \vert^2},\quad
V= (v_x)\arrowvert_\Sigma  =\nabla_x \psi - B \nabla_x \eta.
 \end{equation}
 Recall also that the Taylor coefficient $a = -\frac{\partial P}{\partial y}\big\arrowvert_\Sigma$
 can be defined in terms of $\eta,\psi,B,V$ only (see \S 4.2 in \cite{ABZ1} and \S 4.3.1 in \cite{LannesLivre}).\\
\hk  The (local) well-posedness theory for gravity water waves (under the formulation \eqref{ww} or the others) in Sobolev spaces $H^s(\xR^d)$  has been studied by many authors, for example Yosihara \cite{Yosihara1982}, Wu \cite{WuInvent, WuJAMS}, Lannes \cite{LannesJAMS}; we refer to the recent book of Lannes \cite{LannesLivre} for a comprehensive survey of the subject. In these works, the waves were assumed to be of infinite extend (and vanish at infinity), that is, there is no restriction on the horizontal direction. However, in reality water waves always propagate in some bounded container (a lake, an ocean, etc) and hence there will be contacts at the "vertical boundary" of the container. A natural question then arises: (Q) can we justify the $\xR^d$-approximation? More precisely, if \eqref{ww} is a good model then it has to satisfy in certain sense the following property: the dynamic at "infinity" has a small effect on bounded regions.  Since \eqref{ww} appears to be nonlocal (due to the presence of the Dirichlet-Neumann operator) it is not clear that the above replacement at "infinity" is harmless. We should mention that in the special case of a canal or a rectangle basin where the walls are right vertical, the local theory was considered by Alazard-Burq-Zuily \cite{ABZ2}, Kinsey-Wu \cite{KinseyWu}, Wu \cite{Wu2015}.  Our goal in the present paper is to give the following answer to question (Q). Considering a bounded reference  domain, we shall prove that in some sense, far away from this reference domain, the dynamic there has a small effect on the reference domain, and the farther it is the smaller the effect is. In other words, this proves that the gravity water waves system enjoys the "pseudo-local property" (the terminology "pseudo" will be clear in our explanation below).
\subsection{Main results}
We recall first the definition of uniformly local Sobolev spaces (or Kato's spaces) introduced by Kato in \cite{Kato}. 
\begin{defi}\label{Kato}
Let $\chi \in C ^\infty(\xR^d)$ with
$\supp \chi \subset [-1,1]^d, \chi = 1$ near $ [-\frac{1}{4},\frac{1}{4}]^d$   such that 
\begin{equation}\label{kiq}
\sum_{q \in \xZ^d}\chi_q(x) = 1,\quad \forall   x\in \xR^d,~\chi_q(x) = \chi(x-q).
\end{equation} 
For $s\in \xR$ define $H^s_{ul}(\xR^d)$ the space of distributions $u\in H^s_{loc}(\xR^d)$  such that 
$$\Vert u\Vert_{H^s_{ul}(\xR^d)}:= \sup_{q\in \xZ^d}\Vert \chi_q u\Vert_{H^s (\xR^d)} < +\infty.$$
\end{defi}
This definition is independent of the choice of  $\chi\in C_0^\infty(\xR^d)$ satisfying~\eqref{kiq} (see Lemma 7.1 in \cite{ABZ2}).
Let us now define the classes of weights that we will consider.
\begin{defi}\label{classW}
1. We define the class $\mathcal{W}$ of acceptable weights  to be the class of all functions $w:\xR^d\to (0, \infty)$ satisfying the following conditions:
\begin{enumerate}
\item[(i)]  $r_1:=\frac{\nabla w^{-1}}{w^{-1}}$ and $r'_1:=\frac{\nabla w}{w}$ belong to $C^\infty_b(\xR^d)$, where $w^{-1}(x)=1/w(x)$,\\
\item [(ii)] for any $C_1>0$, there exists $C_2>0$ such that for any $x_0\in \xR^d$, there hold 
\[
w(x)\le C_2w(x_0)\quad\text{and}\quad w(x)^{-1}\le C_2w(x_0)^{-1}\quad\forall x\in \xR^d,~|x-x_0|\le C_1,
\]
\end{enumerate}
2. If $w\in \mathcal{W}$ and there exist $\varrho\ge 0,~C>0$ such that for any $x,~y\in \xR^d$ we have $w(x)w^{-1}(y)\le C\langle x-y\rangle^\varrho$ then we say that $w\in \mathcal{W}_{po}(\varrho)$.\\
3.  If $w\in \mathcal{W}$ and there exist $\varrho\ge 0,~C>0$ such that for any $x,~y\in \xR^d$ we have $w(x)w^{-1}(y)\le C\exp(\rho\langle x-y\rangle)$  then we say that $w\in \mathcal{W}_{ex}(\varrho)$.
\end{defi}
\begin{exam}
For any $t, s\in \xR,~C>1$, the functions $ \langle x\rangle ^s,~\ln(C+|x|^2)$ belong to $\mathcal{W}_{po}$, and the functions $e^{t\langle x\rangle},~e^{t\langle x\rangle}\langle x\rangle ^s$ belong to  $\mathcal{W}_{ex}$ but not to any class $\mathcal{W}_{po}$ if $t\ne 0$. See Remark \ref{rema:Wpo} for further remarks.
\end{exam}
\begin{nota}\label{widetildechi}
1.  $\widetilde{\chi}$ denotes a function in $C^{\infty}_0(\xR^d)$ such that $\widetilde{\chi}=1$ on the support of $\chi$ in definition \ref{Kato}. For every $k\in \xZ^d$, we also Define for $x\in \xR^d$, $\widetilde \chi_k(x):=\widetilde \chi(x-k)$.
2. We set for all $\sigma\in \xR$,
\begin{align*}
\mathcal{H}_{ul}^{\sigma}&=H_{ul}^{\sigma+\mez}(\xR^d)\times H_{ul}^{\sigma+\mez}(\xR^d)\times H_{ul}^\sigma(\xR^d)\times H_{ul}^\sigma(\xR^d),\\
\mathcal{W^\sigma}&=W^{\sigma+\mez, \infty}(\xR^d)\times W^{\sigma+\mez, \infty}(\xR^d)\times W^{\sigma, \infty}(\xR^d)\times W^{\sigma, \infty}(\xR^d).
\end{align*}
Denote also by $U=(\eta, \psi, B, V)$ the unknown of system \eqref{ww} and by $U^0=(\eta^0, \psi^0, B^0, V^0)$ its initial value.
\end{nota}
The Cauchy theory proved in \cite{ABZ2} reads as follows
\begin{theo}\label{theo:ABZ}
Let $s>1+\frac{d}{2}$ and $U^0\in \mathcal{H}_{ul}^s$ with
\bq\label{condition:data}
\inf_{x\in \xR^d} (\eta^0(x)  -\eta_*(x))\geq 2 h>0, \quad \inf_{x\in \xR^d}a(0, x) \geq 2c>0.
\eq
Then there exists $T>0$ such that the Cauchy problem for \eqref{ww} with  datum $U^0$  has a unique solution  
\[
U\in L^\infty([0,T], \mathcal{H}_{ul}^s)\cap C^0([0,T], \mathcal{H}_{ul}^{r}),\quad \forall r<s
\]
and
\bq\label{condition:solution}
\inf_{t\in[0, T]}\inf_{x\in \xR^d}[\eta(t,x) - \eta_*(x)] \geq  h,\quad \inf_{t\in [0, T]}\inf_{x\in \xR^d}a(t,x) \geq  c.
\eq
Moreover, for given  $h,~c>0$ the existence time $T$ can be chosen uniformly for data belonging to a bounded set of $\mathcal{H}_{ul}^s$.
\end{theo}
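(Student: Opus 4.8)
The plan is to follow the paradifferential strategy of Alazard--Burq--Zuily, run on top of the uniformly local calculus developed in this paper. The skeleton is: (i) analyze the Dirichlet--Neumann operator $G(\eta)$ to obtain its paralinearization and tame estimates in the scale $H^\sigma_{ul}$; (ii) paralinearize and symmetrize \eqref{ww} into a paradifferential evolution equation of the form $\partial_t u + T_V\cdot\nabla u + iT_\gamma u = f$ with $\gamma$ elliptic of order $\mez$ and principal symbol $\sqrt{a\lambda}$, where $\lambda$ is the principal symbol of $G(\eta)$; (iii) prove energy estimates for this symmetrized system in $\mathcal H^s_{ul}$, yielding an a priori bound on a time interval $T$ that depends only on $\|U^0\|_{\mathcal H^s_{ul}}$, $h$, and $c$; (iv) build the solution by regularizing the data and passing to the limit; (v) obtain uniqueness and time-continuity from a contraction estimate at lower regularity. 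Because Littlewood--Paley theory, Bony's calculus, and elliptic regularity are not a priori available in $H^s_{ul}$, the whole argument rests on the weighted uniformly local paradifferential calculus, which I invoke freely.

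First I would flatten the fluid domain by a diffeomorphism $(x,y)\mapsto(x,z)$ sending $\{\eta_*(x)<y<\eta(t,x)\}$ onto a fixed strip (truncated near the bottom), turning $\Delta_{x,y}\phi=0$ into a uniformly elliptic equation $\partial_z^2\varphi + \alpha\Delta_x\varphi + \beta\cdot\nabla_x\partial_z\varphi + \mu\,\partial_z\varphi = 0$ with coefficients tamely controlled by $\eta$ in $H^s_{ul}$. Localizing with the partition $\{\chi_q\}$, summing, and exploiting the decay of the Poisson kernel away from the diagonal, one gets uniform-in-$q$ elliptic estimates for $\nabla_{x,z}\varphi$ in terms of the boundary data. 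Factoring the elliptic operator into a product of two first-order paradifferential factors, one of which is elliptic with principal symbol $\lambda=\sqrt{(1+|\nabla\eta|^2)|\xi|^2-(\nabla\eta\cdot\xi)^2}$ (modulo a smoothing remainder), then yields $G(\eta)\psi = T_\lambda\psi + $ (lower order), together with $\|G(\eta)\psi\|_{H^{s-1}_{ul}}\le \cF(\|\eta\|_{H^{s+\mez}_{ul}})\|\psi\|_{H^{s+\mez}_{ul}}$ and a matching contraction estimate in $\eta$. Using \eqref{BV} to rewrite $\partial_t\psi$, introducing the good unknown $\theta=\psi-T_B\eta$ to cancel the worst terms, and paralinearizing the remaining nonlinearities reduces \eqref{ww} to $\partial_t\eta - T_\lambda\theta = f_1$, $\partial_t\theta + T_V\cdot\nabla\theta + T_a\eta = f_2$ modulo acceptable remainders; conjugating by an elliptic symmetrizer of order $\mez$ built from $\lambda$ and $a$ puts this in the symmetric form above with $u$ ranging in $H^s_{ul}\times H^s_{ul}$.

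Next I would run the energy estimate: with $E(t)=\sum_q\|\chi_q u(t)\|_{H^s}^2$ (or its symmetrized variant), using that $T_\gamma$ is self-adjoint up to order $0$, that $T_V\cdot\nabla+(T_V\cdot\nabla)^\ast$ is bounded on $H^s_{ul}$, and the commutator bounds of the uniformly local calculus, one gets $\frac{d}{dt}E(t)\le \cF(\|U(t)\|_{\mathcal H^s_{ul}})$. Since the lower bounds \eqref{condition:solution} are open conditions they persist on a short interval by continuity, and Gronwall produces the a priori estimate on $[0,T]$ with $T$ depending only on the data size, $h$, $c$; this already gives the uniformity over bounded sets. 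For existence, mollify $U^0$ to $U^0_n=J_nU^0$ with $J_n$ commuting with $\chi_q$, solve \eqref{ww} (or a suitable regularized system) for the smooth data on the common interval $[0,T]$ furnished by the a priori bound, and pass to the limit: there is no global compact embedding, so one extracts locally (each $\chi_q U_n$ lies in a bounded set of $H^s(\xR^d)$, where Rellich applies) and patches the limits using uniqueness, while the equation upgrades weak-$\ast$ convergence to $C^0([0,T],\mathcal H^r_{ul})$ for $r<s$. Uniqueness and the time-continuity statement follow from a contraction estimate for the difference of two solutions carried out at regularity one notch below $s$.

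The genuinely delicate point is the interface between the nonlocality of $G(\eta)$ and the uniformly local framework. Because $G(\eta)$ does not respect the partition $\{\chi_q\}$, one must bound $\chi_q G(\eta)\psi$ by $\psi$ with no help from decay at infinity, which forces the elliptic estimates above to be proved with real care about far-field contributions, and it forces all symbol classes to register only uniform-local --- not global --- regularity of $\eta,\psi,B,V,a$. Getting the constants in every paradifferential and commutator estimate to depend only on $H^s_{ul}$- and $W^{\sigma,\infty}$-type norms, and not to degenerate as $|q|\to\infty$, is the crux; once that calculus is in hand, the symmetrization, the energy argument, and the iteration are structurally identical to the $H^s(\xR^d)$ theory.
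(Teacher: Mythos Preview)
This theorem is not proved in the present paper: it is quoted verbatim as ``The Cauchy theory proved in \cite{ABZ2} reads as follows,'' and the proof lives entirely in that reference. So there is no in-paper proof to compare against; your task reduces to checking whether your outline matches the argument of \cite{ABZ2} (and its predecessor \cite{ABZ1}).

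On that score your sketch is essentially the Alazard--Burq--Zuily scheme and is correct in spirit: flatten the domain, prove elliptic regularity for the Poisson problem uniformly in the $\chi_q$-localizations, paralinearize $G(\eta)$ with principal symbol $\lambda$, introduce the good unknown, symmetrize to a hyperbolic paradifferential system with an elliptic symbol $\gamma$ of order $\mez$, close an a priori estimate via Gronwall, and obtain existence/uniqueness by regularization and a contraction estimate one derivative below. One small correction: you speak of invoking the ``weighted uniformly local paradifferential calculus,'' but Theorem~\ref{theo:ABZ} carries no weight $w$; only the plain $H^\sigma_{ul}$ calculus of \cite{ABZ2} is needed here, and the weighted machinery of Appendix~\ref{paramachinery} is developed for the paper's \emph{new} results (Theorem~\ref{main} and its corollaries), not for this cited theorem. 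Also, in the compactness step of \cite{ABZ2} the passage to the limit is organized slightly differently than you describe (the a priori estimate is of tame type, with an auxiliary index $s_0\in(1+\frac{d}{2},s)$ controlling the constants; cf.\ the estimate \eqref{apriori} recalled later in the paper), but this does not affect the overall logic of your outline.
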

Conditions \eqref{condition:data} mean that initially, the free surface is away from the bottom and the Taylor coefficient is positively away from $0$. Then the conclusion \eqref{condition:solution} asserts that these properties are propagated by the waves, locally in time. We shall always consider in the sequel solutions of \eqref{ww} obeying these properties, which for the sake of simplicity is denoted by 
\begin{multline}
\mathcal{P}_{s, T}(h, c):=\{ U=(\eta, \psi, B, V)\in L^\infty([0, T], \mathcal{H}^s_{ul})~\text{solution to}~\eqref{ww} ,~\text{satisfying}~\eqref{condition:solution}\\
\text{and}~U\arrowvert_{t=0}~\text{satisfies}~\eqref{condition:data}\}.
\end{multline}
Our main result concerning the solution map of the gravity water waves is stated in the following theorem.
\begin{theo}\label{main}
Let $s>1+{\frac{d}{2}}$, $T>0$ and two positive constants $h, c$.  Then for every $w\in \mathcal{W}_{po}(\varrho),~\varrho\ge 0$  there exists a function $\mathcal{K}:\xR^+\times\xR^+\to \xR^+$ nondecreasing in each argument, such that 
\bq\label{eq.lipschitz}
\Vert w(U_1-U_2)\Vert _{C([0, T], \mathcal{H}_{ul}^{s-1})} \leq \mathcal{K} (M_1, M_2) \Vert w(U_1-U_2)\arrowvert_{t=0}\Vert_{\mathcal{H}_{ul}^{s-1}}
\eq
for all $U_1,~U_2\in \mathcal{P}_{s,T}(h, c)$, provided that the right-hand side is finite, where
\[
M_j:=\lA U_j\rA_{L^\infty([0, T], \mathcal{H}^s_{ul})}<+\infty,\quad j=1, 2.
\]
\end{theo}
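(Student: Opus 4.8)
The plan is to establish \eqref{eq.lipschitz} through a weighted energy estimate for the difference $\delta U\defn U_1-U_2$, carried out at regularity $s-1$ --- the loss of one derivative being the usual price of a quasilinear contraction estimate and the reason for the space $\mathcal{H}^{s-1}_{ul}$ in \eqref{eq.lipschitz}. I would start from the paralinearization of the Dirichlet--Neumann operator and the symmetrization of \eqref{ww} from \cite{ABZ2}, transported to the uniformly local setting by the weighted $\mathcal{H}_{ul}$ paradifferential calculus developed earlier in the paper. Introducing a good unknown $\Phi(U)$ obtained from $U$ by order-zero paradifferential operators, \eqref{ww} takes the schematic form
\[
\partial_t\Phi(U)+T_V\cdot\nabla\,\Phi(U)+i\,T_{\gamma(U)}\,\Phi(U)=\mathcal{F}(U),
\]
where $\gamma(U)$ is a real elliptic symbol of order $\mez$ and $\mathcal{F}(U)$ is a tame remainder of lower order. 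Subtracting the equations for $U_1$ and $U_2$, the difference $\delta\Phi\defn\Phi(U_1)-\Phi(U_2)$ solves the same evolution with coefficients frozen at $U_1$, plus a source that is linear in $\delta U$ with coefficients bounded in terms of $M_1,M_2$ and that, crucially, \emph{gains one derivative} relative to $\delta U$; this gain rests on the contraction estimates for $G(\eta_1)\psi_1-G(\eta_2)\psi_2$, for the Taylor coefficients $a_1-a_2$ and for the auxiliary unknowns $B,V$, all proved beforehand in the $\mathcal{H}_{ul}$ framework.

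I would then derive the evolution satisfied by $w\,\delta\Phi$ --- equivalent to $w\,\delta U$ in $\mathcal{H}^{s-1}_{ul}$ by the weighted calculus --- and run a symmetrized energy estimate for it in $\mathcal{H}^{s-1}_{ul}$, localising with the cut-offs $\chi_q$ of Definition \ref{Kato} and tracking constants uniformly in $q\in\xZ^d$. Three effects must be controlled. First, the commutators between multiplication by $w$ and the paradifferential operators of the symmetrized system: since $\nabla w=w\,r'_1$ with $r'_1\in C^\infty_b$, such a commutator is multiplication by $w$ composed with an operator of order one less, with norm controlled by $\|r'_1\|_{C^\infty_b}$ and the symbol semi-norms; iterating, these contributions re-express themselves through $w\,\delta U$ and get absorbed, the loss of one $x$-derivative per step being affordable since $s-1>d/2$. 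Second, the skew-symmetry of $T_V\cdot\nabla+i\,T_\gamma$ modulo order zero, which by the first point survives conjugation by $w$, so that the first- and half-order parts do not enter $\tfrac{d}{dt}\|w\,\delta U\|^2_{\mathcal{H}^{s-1}_{ul}}$. Third, condition (ii) of Definition \ref{classW}: it forces $w$ to be comparable to a constant on each unit cube $q+[-1,1]^d$, which is exactly what makes the $\mathcal{H}_{ul}$ machinery run with bounds uniform in $q$.

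The genuinely new difficulty is the Dirichlet--Neumann operator once conjugated by $w$ and localised. Its diagonal part is clean: setting $\widetilde\phi=w\,\phi$ with $\phi$ the harmonic extension of $w^{-1}\widetilde\psi$, one checks that $\widetilde\phi$ solves a \emph{uniformly elliptic} equation in the fluid domain with $C^\infty_b$ coefficients --- the lower-order ones built from $r_1=\nabla w^{-1}/w^{-1}\in C^\infty_b$ --- so $w\,G(\eta)\,w^{-1}$ is again a first-order elliptic paradifferential operator, obeying the same estimates but with constants now also depending on $\|r_1\|_{C^\infty_b}$. The off-diagonal pieces $\chi_q\big(w\,G(\eta)\,w^{-1}\big)(1-\widetilde\chi_q)$ are smoothing in the Sobolev scale, but they inherit the \emph{long-range} character of the harmonic extension, whose kernel decays only polynomially in $x-y$ (already for flat $\eta$, $G$ is $|D_x|$, with polynomially-decaying convolution kernel); hence the weighted off-diagonal kernel is integrable only when $w(x)\,w^{-1}(y)\lesssim\L{x-y}^\varrho$ grows at most polynomially, i.e.\ $w\in\mathcal{W}_{po}(\varrho)$. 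This is precisely why the theorem is stated for $\mathcal{W}_{po}$ and not for the exponential class $\mathcal{W}_{ex}$.

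Putting the three effects together with the bounds for the nonlocal operator should give a differential inequality $\tfrac{d}{dt}\|w\,\delta U(t)\|^2_{\mathcal{H}^{s-1}_{ul}}\le\mathcal{K}(M_1,M_2)\,\|w\,\delta U(t)\|^2_{\mathcal{H}^{s-1}_{ul}}$, and Gronwall's lemma then yields \eqref{eq.lipschitz}; the hypothesis that the right-hand side of \eqref{eq.lipschitz} be finite is used only to start a continuity/bootstrap argument guaranteeing that $w\,\delta U(t)$ remains finite on $[0,T]$. I expect the main obstacle to be the simultaneous presence of nonlocality of $G(\eta)$, the uniformly local spatial structure, and an unbounded weight in every commutator and remainder estimate: the heart of the matter is to make the weighted $\mathcal{H}_{ul}$ paradifferential calculus quantitative enough that all constants depend only on $\|r_1\|_{C^\infty_b}$, $\|r'_1\|_{C^\infty_b}$, $\varrho$, the cube size and $M_1,M_2$, and never on the index $q$.
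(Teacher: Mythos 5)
Your proposal follows essentially the same route as the paper: reuse the ABZ paralinearization/good-unknown/symmetrization scheme, with the genuinely new ingredients being weighted elliptic estimates for the Dirichlet--Neumann operator (its bound, paralinearization, and contraction) and a weighted uniformly local paradifferential calculus in which commutators with $w$ are controlled via $r_1,r_1'\in C^\infty_b$ and the weight can be shifted onto whichever factor one prefers. You also correctly identify the polynomial off-diagonal decay of pseudodifferential kernels as the reason the result is restricted to $\mathcal{W}_{po}$, which is exactly the paper's explanation.
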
 
As a consequence, we have 
\begin{coro}\label{coro:main}
 Let $s>1+{\frac{d}{2}}$; $h,c>0$ and $\mathcal{A}$ be a bounded set in $\mathcal{H}_{ul}^s$. Denote by $T$ the uniform existence time of solutions to \eqref{ww} in $\mathcal{P}_{s,T}(h, c)$ with data in $\mathcal{A}$. Then there exists $0<T_1\le T$ such that the following property holds:\\
 for every $w\in \mathcal{W}_{po}(\varrho),~\varrho\ge 0$ one can find a constant $C>0$ such that
\bq\label{eq.lipschitz1}
\Vert w (U_1-U_2)\Vert _{C([0, T_1], \mathcal{H}_{ul}^{s-1})} \leq C \Vert w (U_1-U_2)\arrowvert_{t=0}\Vert_{\mathcal{H}_{ul}^{s-1}}, 
\eq
for all $U_j\in \mathcal{P}_{s,T}(h, c)$ with $U_j\arrowvert_{t=0}\in \mathcal{A}$ and  provided that the right-hand side is finite.
\end{coro}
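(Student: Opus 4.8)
The plan is to deduce the corollary from Theorem~\ref{main} by rendering the constant uniform over the bounded set $\mathcal{A}$; the only input needed beyond Theorem~\ref{main} is a uniform bound on the solutions themselves, which is built into the proof of Theorem~\ref{theo:ABZ}.

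First I would set $R:=\sup_{U^0\in\mathcal{A}}\Vert U^0\Vert_{\mathcal{H}_{ul}^s}<+\infty$, finite since $\mathcal{A}$ is bounded in $\mathcal{H}_{ul}^s$. The key preliminary step is to produce a time $T_1\in(0,T]$ and a constant $M<+\infty$, both depending only on $R,h,c,s$ (and on the fixed bottom $\eta_*$), such that every $U\in\mathcal{P}_{s,T}(h,c)$ with $U\arrowvert_{t=0}\in\mathcal{A}$ obeys
\[
\Vert U\Vert_{L^\infty([0,T_1],\mathcal{H}_{ul}^s)}\le M .
\]
This is exactly the a priori estimate that underpins the uniform existence statement in Theorem~\ref{theo:ABZ}: the continuation argument making $T$ uniform over bounded data simultaneously yields such a bound on a possibly shorter interval, which is where the parameter $T_1\le T$ enters. (If one prefers not to reopen that proof, the same conclusion follows from a short bootstrap: assuming $\Vert U(t)\Vert_{\mathcal{H}_{ul}^s}\le 2R$ on a maximal subinterval, the energy estimates for \eqref{ww} in uniformly local spaces improve this to $\le\tfrac{3}{2}R$ for $t\le T_1(R,h,c)$, fixing $T_1$ and giving $M=2R$.)

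Once $T_1$ and $M$ are fixed, the rest is formal. Restricting any $U_j\in\mathcal{P}_{s,T}(h,c)$ with data in $\mathcal{A}$ to $[0,T_1]$ gives an element of $\mathcal{P}_{s,T_1}(h,c)$ --- membership in $\mathcal{H}_{ul}^s$, being a solution of \eqref{ww}, and the conditions \eqref{condition:solution} and \eqref{condition:data} are all preserved under restriction in time --- with $M_j:=\Vert U_j\Vert_{L^\infty([0,T_1],\mathcal{H}_{ul}^s)}\le M$. Applying Theorem~\ref{main} with $T_1$ in place of $T$ to the given weight $w\in\mathcal{W}_{po}(\varrho)$ furnishes a nondecreasing $\mathcal{K}$ with
\[
\Vert w(U_1-U_2)\Vert_{C([0,T_1],\mathcal{H}_{ul}^{s-1})}\le\mathcal{K}(M_1,M_2)\,\Vert w(U_1-U_2)\arrowvert_{t=0}\Vert_{\mathcal{H}_{ul}^{s-1}}
\]
whenever the right-hand side is finite. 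Since $\mathcal{K}$ is nondecreasing in each argument and $M_j\le M$, we get $\mathcal{K}(M_1,M_2)\le\mathcal{K}(M,M)$, so \eqref{eq.lipschitz1} holds with $C:=\mathcal{K}(M,M)$, which depends only on $w$ (through $\mathcal{K}$) and on $\mathcal{A},h,c,s$ (through $M$). The one non-formal ingredient is the uniform bound $M$, and I expect that --- though standard given the machinery behind Theorem~\ref{theo:ABZ} --- to be the main point to verify.
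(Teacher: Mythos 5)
Your overall strategy coincides with the paper's: establish a uniform bound $M$ on $\Vert U_j\Vert_{L^\infty([0,T_1],\mathcal{H}^s_{ul})}$ for all solutions with data in $\mathcal{A}$, then apply Theorem~\ref{main} and take $C=\mathcal{K}(M,M)$. That reduction is correct. The gap is in the bootstrap you sketch for producing $M$ and $T_1$, which is the whole content of the proof.

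Two problems with the ``short bootstrap'' as you describe it. First, you run the continuity argument in the top norm $\mathcal{H}^s_{ul}$, assuming $\Vert U(t)\Vert_{\mathcal{H}^s_{ul}}\le 2R$ on a maximal subinterval and improving it. But by Theorem~\ref{theo:ABZ} the solution is only in $L^\infty([0,T],\mathcal{H}^s_{ul})\cap C^0([0,T],\mathcal{H}^r_{ul})$ for $r<s$; the function $\tau\mapsto \Vert U\Vert_{L^\infty([0,\tau],\mathcal{H}^s_{ul})}$ need not be continuous, so the continuity argument does not close at level $s$. Second, the a priori inequality from \cite{ABZ2} has the form $M^j_s(\mathcal{T})\le \mathcal{F}\bigl(M^j_\sigma(0)+\mathcal{T}M^j_\sigma(\mathcal{T})\bigr)\bigl(M^j_s(0)+\mathcal{T}M^j_s(\mathcal{T})\bigr)$ with $\mathcal{F}$ an unspecified nondecreasing function; one cannot in general turn $2R$ into $\tfrac32 R$, since $\mathcal{F}(\cdot)\ge1$ is all one knows, so $M=2R$ is not attainable. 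The paper circumvents both issues by a two-tier argument: fix $s_0\in(1+\tfrac{d}{2},s)$, run the continuity bootstrap in the $\mathcal{H}^{s_0}_{ul}$-norm (where the solution \emph{is} continuous in time) to obtain a uniform bound $N$ on some $[0,T_0]$, and then feed this into the a priori estimate with $\sigma=s_0$ to get $M^j_s(\tau)\le \mathcal{F}(N(1+T_0))\bigl(M^j_s(0)+\tau M^j_s(\tau)\bigr)$, from which the bound $M^j_s(T_1)\le 2\mathcal{F}(N(1+T_0))M^j_s(0)$ follows by absorbing the last term once $T_1\mathcal{F}(N(1+T_0))\le\tfrac12$. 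You should replace your one-step bootstrap by this two-norm argument (or make explicit what else you invoke from the proof of Theorem~\ref{theo:ABZ}); as written, the bootstrap does not run.
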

In Corollary \ref{coro:main} if we take $U_2\arrowvert_{t=0}=0$  and use the Sobolev embeddings (see Proposition 2.2, \cite{ABZ2}) 
\[
H^r_{ul} (\xR^d)\hookrightarrow W^{r-\frac{d}{2}, \infty}(\xR^d),\quad r>\frac{d}{2},~r-\frac{d}{2}\notin \xN,
\]
we derive 
\begin{coro}\label{coro2}
 Let $s>1+{\frac{d}{2}}$ and $h, c>0$. Then  for any    bounded set $\mathcal{A}$ in $\mathcal{H}_{ul}^s$, there exists a time $T>0$ such that:\\
for every $w\in \mathcal{W}_{po}(\varrho),~\varrho\ge 0$  one can find a constant $C>0$ such that
\bq\label{decay:H}
\Vert wU\Vert _{C([0, T], \mathcal{H}_{ul}^{s-1})} \leq C \Vert wU\arrowvert_{t=0}\Vert_{\mathcal{H}_{ul}^{s-1}}
\eq
for all $U\in \mathcal{P}_{s,T}(h, c)$ with $U\arrowvert_{t=0}\in \mathcal{A}$ and provided that the right-hand side is finite. Moreover, if $s\ge r>1+\frac{d}{2}$ and $r-\frac{d}{2}\notin \xN$ it follows that
\bq\label{decay}
\Vert w U\Vert _{C([0, T], \mathcal{W}^{r-1-\frac{d}{2}})} \leq C \Vert wU\arrowvert_{t=0}\Vert_{\mathcal{H}_{ul}^{s-1}}.
\eq
\end{coro}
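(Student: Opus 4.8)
The plan is to deduce both estimates from Corollary \ref{coro:main} by taking $U_2$ to be the still-water solution $U_\star\equiv(0,0,0,0)$. First I would check that $U_\star$ really solves \eqref{ww}: the Dirichlet--Neumann operator annihilates constants (in particular the zero trace), so $\partial_t\eta=G(0)\,0=0$, and the second equation then gives $\partial_t\psi=0$ as well. Next I would verify that $U_\star\in\mathcal{P}_{s,T}(h,c)$: assuming, as is natural for waves decaying at infinity, that the bottom lies strictly below the reference level, $\sup_x\eta_*(x)<0$, one has $\inf_x(-\eta_*(x))>0$ and Taylor coefficient $a\equiv g>0$, so that \eqref{condition:data} and \eqref{condition:solution} hold after, if necessary, decreasing $h$ and $c$; this only enlarges $\mathcal{P}_{s,T}(h,c)$, so the solutions $U$ under consideration stay in the class, and the uniform existence time is simply relabelled $T$. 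Finally I would replace $\mathcal{A}$ by $\mathcal{A}\cup\{0\}$, which is still bounded in $\mathcal{H}_{ul}^s$, so that both members of the pair $(U,U_\star)$ have data in this set; by the uniqueness part of Theorem \ref{theo:ABZ}, the solution issued from the zero datum is $U_\star\equiv0$.

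With this in hand, Corollary \ref{coro:main} (applied to $\mathcal{A}\cup\{0\}$) produces a time $T>0$ and, for each $w\in\mathcal{W}_{po}(\varrho)$, a constant $C$ such that \eqref{eq.lipschitz1} holds for $(U_1,U_2)=(U,U_\star)$ whenever $U\in\mathcal{P}_{s,T}(h,c)$ with $U\arrowvert_{t=0}\in\mathcal{A}$ and the right-hand side is finite. Since $U_\star\equiv0$, the two sides there are $\Vert wU\Vert_{C([0,T],\mathcal{H}_{ul}^{s-1})}$ and $\Vert wU\arrowvert_{t=0}\Vert_{\mathcal{H}_{ul}^{s-1}}$, which is exactly \eqref{decay:H} together with its finiteness proviso.

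For the refined bound \eqref{decay} I would argue componentwise on $wU=(w\eta,w\psi,wB,wV)$. By \eqref{decay:H} this function belongs to $C([0,T],\mathcal{H}_{ul}^{s-1})$, and since $s\ge r$ one has the componentwise embedding $\mathcal{H}_{ul}^{s-1}\hookrightarrow\mathcal{H}_{ul}^{r-1}$; it therefore suffices to apply the Sobolev embedding $H_{ul}^{\rho}(\xR^d)\hookrightarrow W^{\rho-\frac d2,\infty}(\xR^d)$ recalled before the statement, with $\rho=r-\mez$ on the $(\eta,\psi)$-slots and $\rho=r-1$ on the $(B,V)$-slots, and then take the supremum over $t\in[0,T]$. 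For the velocity components the hypotheses $r>1+\frac d2$ and $r-\frac d2\notin\xN$ give exactly that $\rho-\frac d2=r-1-\frac d2$ is positive and non-integral; for $w\eta$ and $w\psi$ one needs in addition $r-\mez-\frac d2\notin\xN$, which should be adjoined to the hypotheses (alternatively, these two bounds may be stated in Zygmund spaces $\mathcal{C}^{\rho-d/2}_*$, into which $H_{ul}^{\rho}$ embeds for every $\rho>\frac d2$ with no integer exclusion).

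I do not expect any genuine obstacle specific to this corollary: the entire analytic burden is carried by Theorem \ref{main}, hence by Corollary \ref{coro:main}, which we may use as a black box. The two points that do need (light) attention are those flagged above --- checking that the equilibrium solution is admissible for the prescribed constants $h$ and $c$ (handled by enlarging $\mathcal{A}$ and, if necessary, shrinking $h,c$ and relabelling $T$), and the non-integrality conditions required to pass from $\mathcal{H}_{ul}^{s-1}$ down to the $L^\infty$-based scale $\mathcal{W}^{r-1-\frac d2}$ through the Sobolev embedding.
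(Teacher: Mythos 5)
Your approach matches the paper's exactly: the paper states in one line that Corollary \ref{coro2} follows from Corollary \ref{coro:main} by taking $U_2\arrowvert_{t=0}=0$ and then applying the Sobolev embedding $H^\rho_{ul}\hookrightarrow W^{\rho-\frac d2,\infty}$. Your fleshed-out version of this is sound, and the two caveats you flag are genuine points the paper elides. First, the zero solution $U_\star\equiv 0$ does need to lie in some class $\mathcal{P}_{s,T}(h',c')$, which via \eqref{condition:data} forces $\inf_x(-\eta_*)\geq 2h'>0$, i.e.\ $\sup_x\eta_*<0$; this is the usual normalization but is never stated in the paper, so recording it (together with the observation that shrinking $h,c$ and enlarging $\mathcal{A}$ to $\mathcal{A}\cup\{0\}$ only enlarges the admissible class) is a legitimate clarification. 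Second, you are right that for the $(\eta,\psi)$-slots the embedding is applied with exponent $\rho=r-\mez$, so the non-integrality requirement there is $r-\mez-\frac d2\notin\xN$; the stated hypothesis $r-\frac d2\notin\xN$ only handles the $(B,V)$-slots (for which $r-1-\frac d2\notin\xN$ is equivalent, since $r-\frac d2>1$). Either adjoining $r-\mez-\frac d2\notin\xN$ to the hypotheses or passing to Zygmund spaces, as you suggest, repairs this. In short: same proof as the paper, with two small but real oversights in the paper correctly identified and patched.
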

\begin{rema}
 If $w\in C^\infty_b(\xR^d)$ then the right-hand sides of \eqref{eq.lipschitz},~\eqref{eq.lipschitz1},~\eqref{decay:H},~\eqref{decay} are automatically finite.
\end{rema}
\begin{rema}
Persistence properties in weighted spaces have been studied extensively for asymptotic models of water waves in different regimes: (generalized) Korteweg-de Vries equation equations, Schr\"odinger equations, Benjamin-Ono equation, Camassa-Holm equation,...We refer to the works of  Brandolese \cite{Brand}, Bona-Saut \cite{BoSa}, Fonseca-Linares-Ponce \cite{FoLiPo}, Nahas-Ponce \cite{NaPo}, Ni-Zhou \cite{NiZh}.
\end{rema}
\begin{rema}
It is natural to ask if the results in Theorem \ref{main}, Corollary \ref{coro:main}, Corollary \ref{coro2} hold for weights with exponential growth. For example, Theorem \ref{main} with $w=e^{-\lambda |x|},~\ld>0$ would give a strong pseudo-local property of gravity water waves. As we shall explain, the proof of our results can be divided into two parts: first, a study of the Dirichlet-Neumann operator in weighted spaces, and the second part makes use of a paradifferential machinery in weighted Sobolev spaces to paralinearize and symmetrize the system. For the first part, we are able to prove bound estimates for the Dirichlet-Neumann operator in the presence of "exponential weights" in the class $\mathcal{W}_{ex}$ (see Proposition \ref{coroetape2} below). However, for the (pseudo-) para-differential calculus, we have to restrict to "polynomial weights" in the class $\mathcal{W}_{po}$ due to the fact that, in general, the kernel of a pseudodifferential operator only decays polynomially (see paragraph \ref{explanation} 3. and the proof of Proposition \ref{pseudo}). 
\end{rema}
\begin{rema}
It  would of course be more satisfactory if the results could be formulated in terms of the derivatives of $\psi$ since $\psi$ is the trace of the velocity potential (on the free surface) and hence is determined up to additive constants. It should be possible to do so, modulo more technical complications; in particular, a Cauchy theory in Kato's spaces involving only regularity of $\psi$ in homogeneous spaces. We refer to a relating result of Lannes in \cite{LannesJAMS} and the references therein.
\end{rema}
\begin{rema}
Our proofs rely on the known Cauchy theories of Alazard-Burq-Zuily in \cite{ABZ1}, \cite{ABZ2}. To avoid a loss of $\mez$ derivatives, the authors assumed that initially, the trace of velocity $(B, V)$ are $\mez$ derivatives smoother than the natural threshold suggested by formula \eqref{BV} (see also Remark $1.4$, \cite{ABZ1}) . Another  way of avoiding this loss of derivatives can be found in Theorem $4.16$, \cite{LannesLivre} where instead of directly  imposing regularity condition on $\psi$, the author works with the "good unknown" $\underline\psi\thickapprox\psi-B\eta$.
\end{rema}
\subsection {Interpretation of the results}\label{explanation}
 1. The Zakharov system \eqref{ww} appears to be nonlocal, which comes from the fact that the Dirichlet-Neumann operator defined by \eqref{def:G} is nonlocal. This can be seen more concretely by considering the case of fluid domain with infinite depth (i.e. $\Gamma=\emptyset$) and free surface at rest (i.e. $\eta=0$). Then, the Dirichlet-Neumann operators is
$G(0)=|D_x|.$ However, Corollary \ref{coro:main} shows that the system is in fact still weakly local as explained below.\\
\hk  Take $s>1+{\frac{d}{2}}$. Let's restrict ourselves to a bounded set $\mathcal{A}$ of $\mathcal{H}_{ul}^s$ and suppose that we are observing a bounded domain, which by translation can be assumed to be centered at the origin, say $\mathcal{O}=B(0, 1)$. Let $U_{0,1},~U_{0,2}$  be two data in $\mathcal{A}$ such that they are identical  in  a ball $B(0, R)$  and have difference in $\mathcal{H}_{ul}^{s-1}$ of size $1$ outside this ball, where $R>1$ is a given distance. Take a "window" $\phi$ around our observation region $\mathcal{O}$, that is,  $\phi\in C^\infty_0(B(0, 3/2))$ and $\phi\equiv 1$ in  $\mathcal{O}$. Then by the estimate \eqref{eq.lipschitz1} we have for some $T=T(\mathcal{A})>0$ and any $N>0$
\begin{align*}
\Vert \phi(U_1-U_2)\Vert _{C([0, T], \mathcal{H}_{ul}^{s-1})}&\le C_N \Vert \langle \cdot\rangle^{-N} (U_1-U_2)\Vert _{C([0, T], \mathcal{H}_{ul}^{s-1})}\\
 &\le C_{N, \mathcal{A}} \Vert \langle \cdot\rangle^{-N} (U_{0,1}-U_{0,2})\Vert _{\mathcal{H}_{ul}^{s-1}}
\le  C_{N, \mathcal{A}} R^{-N}.
\end{align*}
\begin{center}
\begin{picture}(0,0)%
\includegraphics{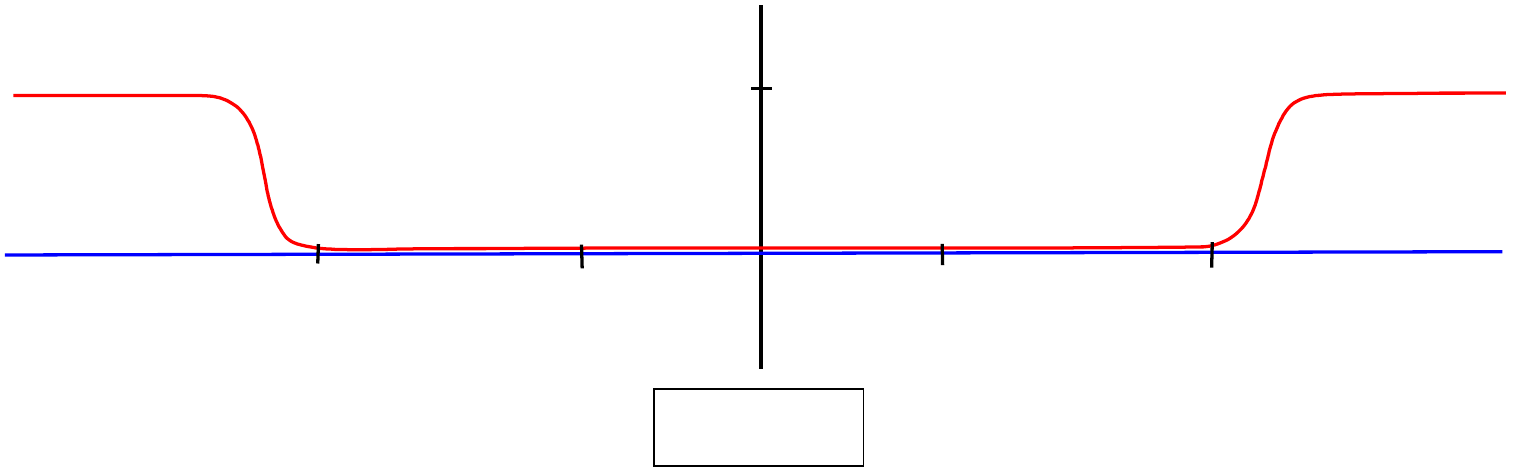}%
\end{picture}%
\setlength{\unitlength}{4144sp}%
\begingroup\makeatletter\ifx\SetFigFont\undefined%
\gdef\SetFigFont#1#2#3#4#5{%
  \reset@font\fontsize{#1}{#2pt}%
  \fontfamily{#3}\fontseries{#4}\fontshape{#5}%
  \selectfont}%
\fi\endgroup%
\begin{picture}(6907,2142)(77,-2451)
\put(2484,-1660){\makebox(0,0)[lb]{\smash{{\SetFigFont{12}{14.4}{\familydefault}{\mddefault}{\updefault}{\color[rgb]{0,0,0}$-1$}%
}}}}
\put(5416,-1655){\makebox(0,0)[lb]{\smash{{\SetFigFont{12}{14.4}{\familydefault}{\mddefault}{\updefault}{\color[rgb]{0,0,0}$R$}%
}}}}
\put(1220,-1666){\makebox(0,0)[lb]{\smash{{\SetFigFont{12}{14.4}{\familydefault}{\mddefault}{\updefault}{\color[rgb]{0,0,0}$-R$}%
}}}}
\put(4240,-1661){\makebox(0,0)[lb]{\smash{{\SetFigFont{12}{14.4}{\familydefault}{\mddefault}{\updefault}{\color[rgb]{0,0,0}$1$}%
}}}}
\put(3645,-662){\makebox(0,0)[lb]{\smash{{\SetFigFont{12}{14.4}{\familydefault}{\mddefault}{\updefault}{\color[rgb]{0,0,0}$1$}%
}}}}
\put(112,-1396){\makebox(0,0)[lb]{\smash{{\SetFigFont{12}{14.4}{\familydefault}{\mddefault}{\updefault}{\color[rgb]{0,0,0}$U_{0,1}$}%
}}}}
\put(3379,-2308){\makebox(0,0)[lb]{\smash{{\SetFigFont{12}{14.4}{\familydefault}{\mddefault}{\updefault}{\color[rgb]{0,0,0}$t=0$}%
}}}}
\put(133,-667){\makebox(0,0)[lb]{\smash{{\SetFigFont{12}{14.4}{\familydefault}{\mddefault}{\updefault}{\color[rgb]{0,0,0}$U_{0,2}$}%
}}}}
\end{picture}%
\end{center}
\begin{center}
\begin{picture}(0,0)%
\includegraphics{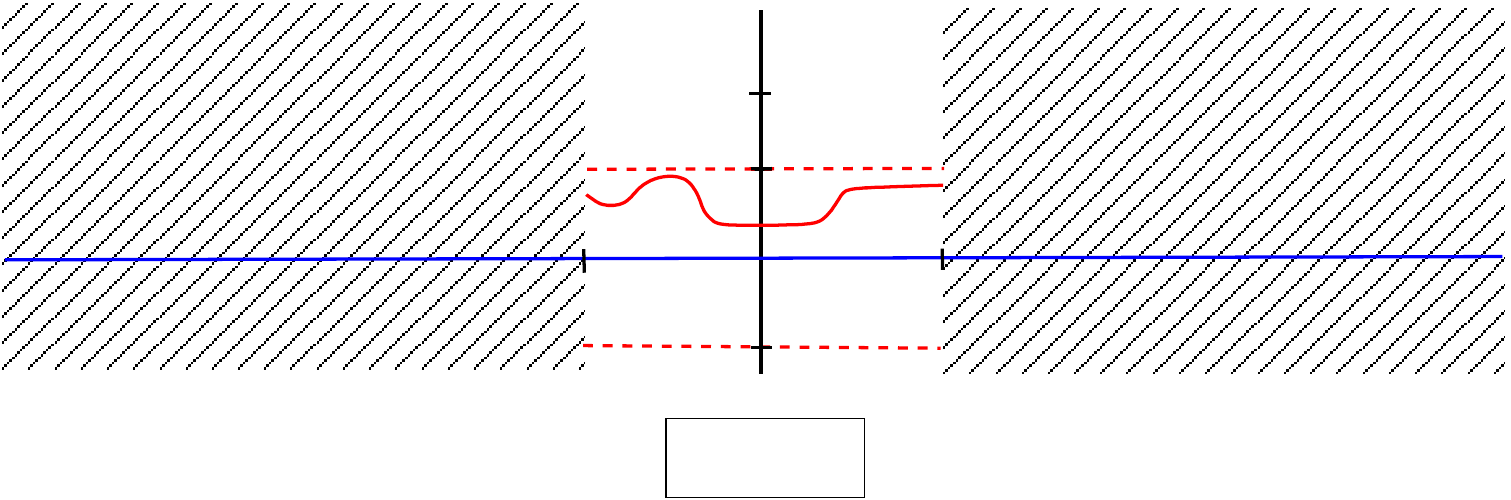}%
\end{picture}%
\setlength{\unitlength}{4144sp}%
\begingroup\makeatletter\ifx\SetFigFont\undefined%
\gdef\SetFigFont#1#2#3#4#5{%
  \reset@font\fontsize{#1}{#2pt}%
  \fontfamily{#3}\fontseries{#4}\fontshape{#5}%
  \selectfont}%
\fi\endgroup%
\begin{picture}(6891,2271)(77,-2571)
\put(4240,-1661){\makebox(0,0)[lb]{\smash{{\SetFigFont{12}{14.4}{\familydefault}{\mddefault}{\updefault}{\color[rgb]{0,0,0}$1$}%
}}}}
\put(3658,-964){\makebox(0,0)[lb]{\smash{{\SetFigFont{12}{14.4}{\familydefault}{\mddefault}{\updefault}{\color[rgb]{0,0,0}$\frac{1}{R^N}$}%
}}}}
\put(3038,-1271){\makebox(0,0)[lb]{\smash{{\SetFigFont{12}{14.4}{\familydefault}{\mddefault}{\updefault}{\color[rgb]{0,0,0}$U_2$}%
}}}}
\put(3635,-2046){\makebox(0,0)[lb]{\smash{{\SetFigFont{12}{14.4}{\familydefault}{\mddefault}{\updefault}{\color[rgb]{0,0,0}$-\frac{1}{R^N}$}%
}}}}
\put(3055,-1643){\makebox(0,0)[lb]{\smash{{\SetFigFont{12}{14.4}{\familydefault}{\mddefault}{\updefault}{\color[rgb]{0,0,0}$U_1$}%
}}}}
\put(3630,-687){\makebox(0,0)[lb]{\smash{{\SetFigFont{12}{14.4}{\familydefault}{\mddefault}{\updefault}{\color[rgb]{0,0,0}$1$}%
}}}}
\put(3172,-2453){\makebox(0,0)[lb]{\smash{{\SetFigFont{12}{14.4}{\familydefault}{\mddefault}{\updefault}{\color[rgb]{0,0,0}$0<t<T$}%
}}}}
\put(2762,-1665){\makebox(0,0)[lb]{\smash{{\SetFigFont{12}{14.4}{\familydefault}{\mddefault}{\updefault}{\color[rgb]{0,0,0}$-1$}%
}}}}
\end{picture}%
\end{center}
Therefore, under the dynamic governed by system \eqref{ww}, a difference of size $1$ outside the ball $B(0, R)$ of initial data leads to a difference of size $~R^{-N}$ of two solutions in the bounded domain $B(0, 1)$ (see the figures above). When $R\to +\infty$, the difference of two solutions tends to $0$ at a rate faster than any polynomial. In other words, to some extent, what happens  far away has small effect on a given bounded region; moreover this effect becomes smaller and smaller when the distance increases to $+\infty$. This gives us a weakly local property of gravity water waves. This property is indeed dictated by the polynomial decay off the diagonal of the kernel of differential operators in suitable classes, as we shall explain in point 3. below.\\
\hk 2. As a consequence of Corollary \ref{coro2}, the estimate \eqref{decay} with $\ld>0$ provides a spatial decay property for solutions. In classical Sobolev spaces, solutions always vanish at infinity. On the other hand, Theorem \ref{theo:ABZ} gives the existence of solutions in Kato's spaces which can be neither decaying nor periodic. The estimate \eqref{decay} however gives a conclusion for the intermediate case: as long as the datum decays at some rate which is at most algebraic 
in Kato's space, the solution decays also, and moreover, at the same rate.\\
\hk 3. Let us explain why the class $\mathcal{W}_{po}$ of weight that are at most polynomial growth  is a reasonable choice in our results. For this purpose, a good way is to look at the linearization of system \eqref{ww} around the rest state $(\eta, \psi)=(0, 0)$ (take $g=1$ and the flat bottom $\{y=-h,~h>0\}$) 
\bq\label{eq:line}
\left\{
\begin{aligned}
&\partial_t \eta-|D_x|\tanh(h|D_x|)\psi=0,\\
&\partial_t\psi+\eta=0
\end{aligned}
\right.
\eq
or equivalently, with $u:=\eta+i|D_x|^\mez\sqrt{\tanh(h|D_x|)} \psi$,
\[
\partial_t u+i|D_x|^\mez\sqrt{\tanh(h|D_x|)} u=0.
\]
Given a datum $u_0$ at time $t=0$, this linearized equation has the explicit solution 
\[
u(t,x)=p(t, D_x)u_0,
\]
where the symbol $p$ reads $p(t,\xi)=e^{-it|\xi|^\mez\sqrt{\tanh(h|\xi|)}}$. Then  for $w\in \mathcal{W}_{po}(\varrho),~\varrho\ge 0$ and $0<T<\infty$, we seek for the following  estimate
\bq\label{est:line}
\| w u\|_{C([0, T]; H^s_{ul})}\le C\| w u_0\|_{H^s_{ul}}.
\eq
Due to the presence of $\tanh(h|\xi|)$ we have that $p$ satisfies 
\bq\label{est:symbolp}
|\partial_\xi^\alpha p(t, \xi)|\le C_\alpha (1+\xi)^{-\mez|\alpha|},~\forall \alpha\in \xN^d,~(t, \xi)\in [0, T]\times \xR^d,
\eq
which is usually denoted by $p\in S^0_{\mez, 0}$. An adaptation of the proof of Proposition \ref{pseudo} then implies the estimate \eqref{est:line}. Indeed, for simplicity let us consider $s=0$. By writing $\chi_q=\chi_q\widetilde \chi_q$ (recall Notation \ref{widetildechi}) we need to show for any fixed $k\in \xZ^d$ 
\bq\label{intro:kernel}
A_k:=\sum_{q} w\chi_kp(t,D_x)\chi_qw^{-1}:{ L^2(\xR^d)\to L^2(\xR^d)} 
\eq
with norm bounded uniformly in $k$. Using the classical pseudo-differential theory, it suffices to prove \eqref{intro:kernel} for $q$ satisfying $|q-k|\ge M$ for fixed $M>0$ (cf  estimate \eqref{pseudo:near}). Due to the presence of $\chi_k$ it suffices to prove $A_k: L^2(\xR^d)\to L^\infty(\xR^d)$. To this end, we call 
\bq\label{intro:K}
K(x, y)=(2\pi)^{-d}\int_{\xR^d}e^{i(x-y)\xi}p(t,\xi)d\xi
\eq
 the kernel of the pseudo-differential operator $p(t, D_x)$ then the kernel of $A_k$ reads 
\[
H_k(x, y)=\sum_{|q-k|\ge M}w(x)\chi_k(x) K(x, y)\chi_q(y)w(y)^{-1}.
\]
 The Cauchy-Schwartz inequality implies
\[
\Vert A_k v\Vert_{L^\infty}\le \sup_x\Vert H_k(x, \cdot)\Vert_{L^2}\Vert v\Vert_{L^2}.
\]
By writing $\chi_q=\widetilde\chi_q\chi_q$ it suffices to show that $\Vert H_k\Vert_{L^\infty_{x, y}}$ is bounded by some constant independent of $k$. Indeed, remark that by choosing $M$ large enough,  on the support of $\chi_k(x)\chi_q(y)$ (in the expression of $H_k$) we have $|x-y|\ge \delta |k-q|$ for some $\delta>0$. Therefore, one can multiply both side of \eqref{intro:K} by $(x-y)^\gamma,~\gamma\in \xN^d$, integrate by parts and take into account the decay property (in $\xi$) \eqref{est:symbolp} of $p$ to derive
\bq\label{intro:sumq}
\vert H_k(x, y)\vert \le C_N\sum_{|q-k|\ge M}\frac{\vert\chi_k(x) w(x)\chi_q(y)w(y)^{-1}\vert}{\langle k-q\rangle^N},\quad\forall N\in\xN.
\eq
Observe that for any $w\in \mathcal{W}$ the absolute value of the numerator of each term in the above series is bounded by 
\[
Cw(k)w(q)^{-1}.
\]
Consequently, for the series in  \eqref{intro:sumq} to be convergent, it is reasonable to choose the weights that satisfy for some $\ld >0$
\[
w(k)w(q)^{-1}\les \langle k-q\rangle^\ld,~\forall k,~q\in \xZ^d.
\]
Then by choosing $N$ large enough the series in \eqref{intro:sumq} converges to some constant independent of $k$ as desired.\\
 This argument suggests heuristically that the finiteness of the fluid depth is likely to be necessary since otherwise, the symbol $p$ become $p(t,\xi)=|\xi|^{1/2}$ which is singular at $0$ and does not belong to a good class of symbols. The local property of the system is closely related to the finite propagation speed property. Indeed, the plane waves $u(t,x)=e^{i(x\cdot k-\omega(k) t)},\omega(k)=|k|^{1/2}$ are solutions to $u_t+i|D_x|^{1/2}u=0$. These plane waves propagate at (group) velocity $\nabla_k\omega(k)$, which is unbounded when the wavenumber $|k|$ tends to $0$. In contrast, for the finite depth case (say $h$), the dispersion relation reads $\omega(k)=\sqrt{|k|\tanh(h|k|)}$ and thus the speed of propagation is bounded over all wavenumber $k$. 
\begin{rema}
In the theory of pseudo-differential calculus, the terminology {\it pseudo-local} refers to the following property: if $T$ is a pseudo-differential operator then the singular support of $Tu$ is contained in the singular support of $u$. The proof of this result makes use of the fact that: the kernel of $T$ is $C^\infty$ off the diagonal $(x, x)$ in $\xR^d\times \xR^d$. This in turn stems from the decay property of the symbol of $T$.\\
The "pseudo-local property" in our result as explained above, also stems from the decay of the kernel of a pseudo-differential operator. However, such a decay is then translated not into the {\it regularity} (in term of the singular support) but the {\it persistence in weighted spaces}.
\end{rema}
\subsection{Plan of the proof} 
To prove Theorem \ref{main}  we follow essentially the scheme in \cite{ABZ2}. The first task is to adapt the paradifferential machinery to Kato's spaces with weights. This is done in Appendix \ref{paramachinery}, which can be of independent interest for other studies in this framework. Having this in hand, compare  to \cite{ABZ2} (and also \cite{ABZ1}) the main ingredient for the proof of Theorem \ref{main} reduces to the study of bound estimates, paralinearization and contraction estimate for the Dirichlet-Neumann operator. These are done in section \ref{descriptDN} and \ref{contractDN} below, respectively.\\
\\ \hk {\bf Acknowledgment.}~
This work was partially supported by the labex LMH through the grant no ANR-11-LABX-0056-LMH in the "Programme des Investissements d'Avenir". I would like to send my deepest thanks to my advisor, Prof. Nicolas Burq for his great guidance with many fruitful discussions and constant encouragement during this work. I sincerely thanks Prof. Claude Zuily for interesting discussions. Finally, I thanks the referees  for proposing many valuable suggestions that helped improve  both the content and the presentation of the manuscript. 
\section{A weighted description for the Dirichlet-Neumann operator}\label{descriptDN}
\begin{nota}
Throughout this paper, we denote $C>0$ and $\cF:\xR^+\to \xR^+$ are multiplicative constants and functions that may change from line to line within a proof. The notation $A\les B$ means that there exist $C>0$ such that $A\le C B$.
\end{nota}
\subsection{Definition of the Dirichlet-Neumann operator}
In this sections, we drop the time dependence of the domain and work on the domain of the form. 
 \begin{equation}\label{omega}
   \Omega =\{(x,y)\in \xR^{d+1}: \eta_*(x) <y < \eta(x)\}
 \end{equation}
where $\eta_*$ is a fixed bounded continuous function on $\xR^d$ and $\eta \in W^{1,\infty}(\xR^d)$. We assume that $\Omega$ contains a fixed strip
 \begin{equation}\label{condition}
      \Omega_h:= \{(x,y) \in \xR^{d+1} : \eta(x) -h \leq y <\eta(x) \}.
  \end{equation}
\subsubsection{Straightening the boundary} 
We recall here the change of variables introduced in \cite{ABZ1} (see section 3.1.1) to flatten the domain with free boundary (which is in turn inspired by Lannes \cite{LannesJAMS}). Consider the map $(x,z) \mapsto (x, \rho(x,z))$ from $\widetilde{\Omega}\defn\xR^d\times(-1,0)$ to $\Omega_h$ determined by
\begin{equation}\label{diffeo}
\rho(x,z)=  (1+z)e^{\delta z\langle D_x \rangle }\eta(x) -z\left [e^{-(1+ z)\delta\langle D_x \rangle }\eta(x) -h\right]\quad \text{if } (x,z)\in \widetilde{\Omega}.
\end{equation}
If $\eta\in W^{1,\infty}(\xR^d)$ and  $\delta$ is small enough this map is a~Lipschitz-diffeomorphism from~$ \widetilde{\Omega} $ to~$\Omega_h $ and moreover, $\partial_z\rho\ge c_0>0$ (see Lemma $3.6$, \cite{ABZ1}).
\begin{nota} For any function $f$ defined on $\Omega$, we  set
\begin{equation}\label{change}
 \widetilde{f}(x,z) = f(x,\rho(x,z))
\end{equation}
then 
\begin{equation}\label{lambda}
 \left\{
 \begin{aligned}
  \frac{\partial f}{\partial y}(x,\rho(x,z)) &= \frac{1}{\partial_z \rho}\partial_z \widetilde{f}(x,z):=\Lambda_1\widetilde{f}(x,z) \\
  \nabla_x f(x,\rho(x,z)) &= \big(\nabla_x \widetilde{f} - \frac{\nabla_x \rho}{\partial_z \rho}\partial_z \widetilde{f}\big)(x,z) := \Lambda_2 \widetilde{f}(x,z).
  \end{aligned}
  \right.
\end{equation}
\end{nota}
\subsubsection{Definition of the Dirichlet-Neumann operator $G(\eta)$}\label{defiDN} Let $\psi \in H_{ul}^\mez(\xR^d)$, we recall how $G(\eta)\psi$ is defined (section 3.1, \cite{ABZ2}). \\
For every $q \in \xZ^d$, set 
$\psi_q = \chi_q \psi \in H^\mez(\xR^d)$ then one can find $  \underline{ \psi}_q \in H^1(\Omega)$ such that $\underline{ \psi}_q  \arrowvert_{ y=\eta(x)}  = \psi_q(x)$ and  for some $\cF:\xR^+\to \xR^+$,
 \begin{align*} 
& (i) \quad \supp \underline{ \psi}_q \subset \{(x,y): \vert x-q \vert \leq 2, \eta(x) -h \leq y \leq \eta(x)\}\\
  & (ii) \quad \| \underline{ \psi}_q \|_{H^1( \Omega)} \leq \mathcal{F}(\Vert \eta \Vert_{W^{1,\infty}(\xR^d)}) \| \psi_q\|_{H^{\mez} ( \xR^d)}. 
   \end{align*}
Let $u_q\in H^{1,0}(\Omega):=\left\{v\in H^1(\Omega),~v\arrowvert_{\Sigma}=0\right\}$ be the unique variational solution, to equation $\Delta_{x,y} u_q = -\Delta_{x,y} \underline{\psi_q}$, which is characterized by 
\begin{equation}\label{eqvar}
\iint_{\Omega} \nabla_{x,y}u_q(x,y) \cdot \nabla_{x,y}\theta(x,y)dx dy = - \iint_{\Omega} \nabla_{x,y} \underline{\psi}_q(x,y) \cdot \nabla_{x,y}\theta(x,y)dx dy  
\end{equation}
for all $\theta \in H^{1,0}(\Omega)$.   The series $u:=\sum_{q\in \xZ^d} {u}_q$ is  then convergent in 
 $$ {H}^{1,0}_{ul}({\Omega}):=\left\{v:~\sup_{q\in \xZ^d}\lA \chi_qv\rA_{H^1(\Omega)}<+\infty~\text{and}~ v\arrowvert_{\Sigma}=0\right\}.$$
 Finally, $\Phi:=u+\underline{\psi}:=\sum_{q\in \xZ}u_q+\sum_{q\in \xZ}\underline{\psi}_q$ solves uniquely the elliptic problem 
\begin{equation}\label{eqPhi}
 \Delta_{x,y} \Phi = 0 \text{ in } \Omega, \quad  \Phi\arrowvert_{\Sigma} = \psi, \quad \frac{\partial \Phi}{\partial \nu}\arrowvert_ \Gamma = 0,
 \end{equation}
in the variational sense and moreover, there exists $\cF:\xR^+\to \xR^+$ such that
\[
\lA \Phi\rA_{H^1_{ul}(\Omega)}\le \cF(\lA \eta\rA_{W^{1,\infty}(\xR^d)})\lA \psi\rA_{H^{\mez}_{ul}(\xR^d)}.
\]
(see Proposition $3.3$, \cite{ABZ2})\\
 The Dirichlet-Neumann operator is defined by
\begin{equation}\label{G=}
\begin{aligned}
 G(\eta) \psi(x) &= (1+\vert \nabla_x \eta\vert^2)^\mez \frac{\partial \Phi}{\partial n}\arrowvert_\Sigma = \big(\frac{\partial \Phi}{\partial y} - \nabla_x \eta \cdot \nabla_x \Phi\big)\arrowvert_\Sigma\\
   &= \big( \Lambda_1\widetilde{\Phi}- \nabla_x \eta \cdot\Lambda_2 \widetilde{\Phi}\big)\arrowvert_{z=0} = \big( \Lambda_1\widetilde{\Phi}- \nabla_x \rho\cdot\Lambda_2 \widetilde{\Phi}\big)\arrowvert_{z=0}.
 \end{aligned}
 \end{equation}
\subsection {Elliptic regularity with weights}\label{elliptic}
We observe that if $u$ is a solution of the elliptic equation $\Delta u=0$ on $\Omega$ and $\widetilde{u} $ is its image via the diffeomorphism (\ref{diffeo}) then 
\[
(\Lambda_1^2+\Lambda^2_2)\widetilde{u} =0,
\]
which is equivalent to (see equation $(3.16)$, \cite{ABZ1})
  \begin{equation}\label{equ:modifie}
(\partial_z^2 + \alpha \Delta_x + \beta\cdot \nabla_x \partial_z - \gamma \partial_z) \widetilde{u} = 0,
\end{equation}
where 
\begin{equation}\label{alpha}
\alpha\defn \frac{(\partial_z\rho)^2}{1+|\partialx  \rho |^2},\quad 
\beta\defn  -2 \frac{\partial_z \rho \nabla_x \rho}{1+|\nabla_x  \rho |^2} ,\quad 
\gamma \defn \frac{1}{\partial_z\rho}\bigl(  \partial_z^2 \rho 
+\alpha\Delta_x \rho + \beta \cdot \nabla_x \partial_z \rho\bigr).
\end{equation}
These coefficients are estimated by
\begin{lemm}[\protect{\cite[Lemma~3.17]{ABZ2}}]\label{est-alpha} Let $J=(-1, 0)$ and $s>1+{\frac{d}{2}}$. There exists   $\mathcal{F}:\xR^+ \to \xR^+$ non decreasing such that (see Definition \ref{XY} for the definition of $X^{\mu}_{ul}$)
  \begin{equation*}
    \Vert \alpha \Vert_{X^{s-\mez}_{ul}(J)}  +  \Vert \beta \Vert_{X^{s-\mez}_{ul}(J)} + \Vert \gamma \Vert_{X^{s-\frac{3}{2}}_{ul}(J)} \\
   \leq \mathcal{F}\big(\Vert  \eta \Vert_{ H^{s+ \mez}_{ul}}\big).
        \end{equation*}
        \end{lemm}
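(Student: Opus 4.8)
The plan is to reduce the estimates on $\alpha,\beta,\gamma$ to regularity bounds for the straightening function $\rho$ of \eqref{diffeo} and its first two derivatives, and then to propagate these through the algebraic formulas \eqref{alpha}. The tools are the standard nonlinear estimates in Kato's spaces: $H^\mu_{ul}(\xR^d)$ is an algebra for $\mu>\frac d2$, it obeys the tame product estimate $\Vert fg\Vert_{H^b_{ul}}\lesssim\Vert f\Vert_{H^a_{ul}}\Vert g\Vert_{H^b_{ul}}$ whenever $a>\frac d2$ and $|b|\le a$, and it is stable under composition with smooth functions vanishing at $0$, with $\Vert F(u)\Vert_{H^\mu_{ul}}\le\cF(\Vert u\Vert_{H^\mu_{ul}})$ for $\mu>\frac d2$; all of these transfer, uniformly in $z\in J=(-1,0)$, to the uniformly local analogue $X^\mu_{ul}(J)$ of the space $X^\mu(J)$ of \cite{ABZ1} (which controls continuity in $z$ with values in $H^\mu_{ul}$ together with an $L^2_z$-gain of half a derivative; see Definition \ref{XY}).

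First I would prove that for $\eta\in H^{s+\mez}_{ul}(\xR^d)$ one has $\Vert\rho\Vert_{X^{s+\mez}_{ul}(J)}\le\cF(\Vert\eta\Vert_{H^{s+\mez}_{ul}})$, and consequently $\nabla_x\rho,\ \partial_z\rho\in X^{s-\mez}_{ul}(J)$ together with $\partial_z^2\rho,\ \Delta_x\rho,\ \nabla_x\partial_z\rho\in X^{s-\mezl}_{ul}(J)$, all with the same type of bound. This follows from the explicit formula \eqref{diffeo}: the building block is $z\mapsto e^{\delta z\langle D_x\rangle}\eta$, whose multiplier $e^{\delta z\langle\xi\rangle}$ is rapidly decreasing for $z<0$, so each $z$-differentiation produces a factor $\delta\langle\xi\rangle$ that is reabsorbed away from $z=0$, while the prefactors $(1+z)$ and $z$ in \eqref{diffeo} neutralise the endpoints $z=-1$ and $z=0$; the $L^2_z$-gain comes from $\int_{-1}^0\langle\xi\rangle e^{2\delta z\langle\xi\rangle}\,dz\lesssim 1$. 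The one point that is genuinely new with respect to \cite{ABZ1} is that $e^{\delta z\langle D_x\rangle}$ is nonlocal, so $\chi_q e^{\delta z\langle D_x\rangle}$ does not localise: one bounds $\Vert\chi_q e^{\delta z\langle D_x\rangle}f\Vert_{H^\mu}$ uniformly in $q\in\xZ^d$ and $z\in J$ by writing $f=\sum_{q'}\chi_{q'}f$ and summing the series via the rapid off-diagonal decay of the kernel of $e^{\delta z\langle D_x\rangle}$, precisely the type of estimate established in the paradifferential appendix (\S\ref{paramachinery}).

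Next I would use that, by Lemma $3.6$ of \cite{ABZ1} (recalled after \eqref{diffeo}), $\partial_z\rho\ge c_0>0$ on $\widetilde\Omega$, and trivially $1+|\nabla_x\rho|^2\ge 1$; by the previous step and the embedding $H^{s-\mez}_{ul}\hookrightarrow L^\infty$ (valid since $s-\mez>\frac d2$) these quantities are also bounded above by $\cF(\Vert\eta\Vert_{H^{s+\mez}_{ul}})$, so $1/\partial_z\rho$ and $1/(1+|\nabla_x\rho|^2)$ are compositions of $(\nabla_x\rho,\partial_z\rho)$ with a function smooth on a fixed neighbourhood of the range of that map. Since $s-\mez>\frac d2$, the space $X^{s-\mez}_{ul}(J)$ is an algebra stable under such compositions, so $\alpha$ and $\beta$ — being smooth functions of $(\nabla_x\rho,\partial_z\rho)$ with denominators bounded away from $0$ — lie in $X^{s-\mez}_{ul}(J)$ with norm $\le\cF(\Vert\eta\Vert_{H^{s+\mez}_{ul}})$. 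For $\gamma=\frac1{\partial_z\rho}\big(\partial_z^2\rho+\alpha\Delta_x\rho+\beta\cdot\nabla_x\partial_z\rho\big)$, the factors $1/\partial_z\rho,\alpha,\beta$ lie in the algebra $X^{s-\mez}_{ul}(J)$ while $\partial_z^2\rho,\Delta_x\rho,\nabla_x\partial_z\rho$ lie only in $X^{s-\mezl}_{ul}(J)$; applying the tame product estimate with $a=s-\mez>\frac d2$ and $b=s-\mezl$ (and $|b|\le a$ since $s>\mezl$) puts each product, hence $\gamma$, in $X^{s-\mezl}_{ul}(J)$ with the asserted bound.

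The hard part is the first step. Because $e^{\pm\delta z\langle D_x\rangle}$ is nonlocal, one has to show it acts boundedly on $H^\mu_{ul}(\xR^d)$, uniformly in $z\in J$ and stably under the cutoffs $\chi_q$, which forces one to track the decay of its kernel off the diagonal and sum over translates — and to do so uniformly up to $z=0$ and $z=-1$, where the smoothing degenerates and is compensated only by the vanishing prefactors in \eqref{diffeo}. Once this uniformly local operator bound is in hand, the remaining steps are routine consequences of the algebra, product and composition properties of $H^\mu_{ul}(\xR^d)$, exactly as in the $\xR^d$ proof of \cite[Lemma~3.17]{ABZ2}.
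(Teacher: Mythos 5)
The paper does not prove this lemma itself — it is cited verbatim from \cite[Lemma~3.17]{ABZ2} — and your reconstruction matches the argument in that reference: smoothing of $e^{\delta z\langle D_x\rangle}$ in uniformly local Sobolev spaces (Lemma 7.10 of \cite{ABZ2}, given a weighted analogue in Proposition \ref{L2smooth} of this paper) to control $\rho$ and its first two derivatives in $X^\mu_{ul}(J)$, followed by the algebra, tame product and composition rules for $H^\mu_{ul}$ to propagate through the formulas \eqref{alpha}. Your exponent checks ($s-\mez>\frac d2$ and $|s-\mezl|\le s-\mez$, both from $s>1+\frac d2$) are correct, and you have rightly flagged the nonlocality of $e^{\delta z\langle D_x\rangle}$ as the one point where the $\xR^d$ proof of \cite{ABZ1} requires genuine modification in Kato's spaces.
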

Let us denote by $\mathcal{L}$ the linear differential operator
\bq\label{L}
\mathcal{L}=\partial_z^2 + \alpha \Delta_x + \beta\cdot \nabla_x \partial_z
\eq
and consider the following inhomogeneous initial value problem 
\begin{equation}\label{elliptique}
\left\{
\begin{aligned}
&(\mathcal{L}-\gamma\partial_z)\widetilde u 
= F  \quad \text{in } \, \xR^d \times  J, \\
&\widetilde{u} \arrowvert_{z=0} = \psi.  
\end{aligned}
\right.
\end{equation}
Recall Definition \ref{classW} for the definitions of weight classes $\mathcal{W},~\mathcal{W}_{po},~\mathcal{W}_{ex}$. It is clear that $\mathcal{W}_{po}(\varrho)\subset \mathcal{W}_{ex}(\varrho)$ for all $\varrho \ge 0$. For any $w\in \mathcal{W}$, defining 
\bq\label{defi:r2}
r_2:=\frac{\Delta w^{-1}}{w^{-1}},~r'_2:=\frac{\Delta w}{w},
\eq
we have that $r_2,~r'_2\in C^\infty_b(\xR^d)$. Now we fix a weight $w\in \mathcal{W}$ and set $\widetilde v=w\widetilde u$. A simple computation shows that  $\widetilde v$ satisfies 
\[
\mathcal{L}\widetilde v+(\beta\cdot r_1-\gamma)\partial_z\widetilde v+\alpha r_2\widetilde v +2\alpha r_1\cdot\nabla_x \widetilde v=wF.
\]
Next, set $\widetilde v_k=\chi_k\widetilde v$, then
\bq\label{Lvk}
\mathcal{L}\widetilde v_k=\chi_kwF+F_0+F_1
\eq
where
\[
\begin{cases}
F_0=\alpha\Delta\chi_k\widetilde v+2\alpha \nabla\chi_k\cdot\nabla_x\widetilde v+\beta\cdot\nabla_x\chi_k\partial_z\widetilde v-\chi_k\beta \cdot r_1\partial_z\widetilde v-\chi_k\alpha r_2\widetilde v -2\chi_k\alpha r_1\cdot\nabla_x\widetilde v,\\
F_1=\chi_k\gamma\partial_z\widetilde v.
\end{cases}
\]
Estimates for $F_j$'s are given in the next lemma.
\begin{lemm}\label{estF0F1}  Let $J=(-1, 0)$ and $s>1+{\frac{d}{2}}$. There exists  $\mathcal{F}:\xR^+ \to \xR^+$  non decreasing such that  for $-\mez\leq \sigma \leq s-1$ we have
 \bq\label{est:F0F1}\sum_{j=0}^1\Vert F_j \Vert_{Y^{\sigma + \mez}(J)} \leq    \mathcal{F}\big(\Vert  \eta \Vert_{H^{s+\mez}_{ul} }\bigr)\big(\lA w\psi\rA_{H^\sigma_{ul}}+ \Vert \nabla_{x,z} \widetilde{v}\Vert_{X^\sigma_{ul}(J)}\big).\eq
(see Definition \ref{XY} for the definition of $Y^{\mu}$), where $\mathcal{F}$ depends on $w$ only through the semi-norms of $r_i, r'_i,~i=1,2$ in $C^\infty_b(\xR^d)$.
\end{lemm}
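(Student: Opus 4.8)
The plan is to estimate $F_0$ and $F_1$ term by term, reducing everything to product estimates in the uniformly local spaces $X^\mu_{ul}(J)$, $Y^\mu(J)$ together with Lemma \ref{est-alpha} for the coefficients $\alpha,\beta,\gamma$. The key structural observation is that every summand appearing in $F_0$ and $F_1$ is, up to multiplication by one of the $C^\infty_b$ functions $r_1$ or $r_2$ (or by the cutoff-derivatives $\nabla\chi_k$, $\Delta\chi_k$, which are uniformly bounded in $C^\infty_b$ and supported in $|x-k|\le 1$), a product of one of $\alpha,\beta,\gamma$ with one of $\widetilde v$, $\nabla_x\widetilde v$, $\partial_z\widetilde v$. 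Since the $C^\infty_b$ multipliers act boundedly on all the relevant spaces with norm controlled by their own $C^\infty_b$ semi-norms, this is exactly where the stated dependence of $\mathcal F$ on $w$ (only through the semi-norms of $r_i,r_i'$) comes from; in particular one never differentiates $w$ more than twice, so only $r_1,r_1',r_2,r_2'$ enter.

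First I would record the relevant paraproduct/product rules in the $X^\mu_{ul},Y^\mu$ scale: for $\sigma$ in the stated range $-\mez\le\sigma\le s-1$, one has $\|ab\|_{Y^{\sigma+\mez}(J)}\lesssim \|a\|_{X^{s-\mez}_{ul}(J)}\|b\|_{X^\sigma_{ul}(J)}$ and the analogous estimate with the factor $\gamma\in X^{s-\tdm}_{ul}(J)$ landing in $Y^{\sigma-\mez}(J)\subset Y^{\sigma+\mez}$ after using $\partial_z\widetilde v\in X^\sigma_{ul}$ — here one must check the index bookkeeping so that the low-regularity coefficient $\gamma$ is still admissible, which works because $s-\tdm>\frac d2-\mez$ and $\sigma\le s-1$. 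These are the uniformly local, $J$-parametrized analogues of the standard product laws; they follow from the localized versions in \cite{ABZ2} (the spaces $X^\mu_{ul}(J)$ are built precisely so that $\chi_q$-localization reduces them to $H^\mu$-in-$x$, $L^2$-or-better-in-$z$ statements). Then for $F_0$: the terms $\alpha\Delta\chi_k\widetilde v$, $\alpha\nabla\chi_k\cdot\nabla_x\widetilde v$, $\chi_k\alpha r_2\widetilde v$, $\chi_k\alpha r_1\cdot\nabla_x\widetilde v$ are all bounded by $\mathcal F(\|\eta\|_{H^{s+\mez}_{ul}})(\|\widetilde v\|_{X^\sigma_{ul}}+\|\nabla_{x,z}\widetilde v\|_{X^\sigma_{ul}})$ via Lemma \ref{est-alpha} and the product rule, and the two $\beta$-terms $\beta\cdot\nabla_x\chi_k\,\partial_z\widetilde v$ and $\chi_k\beta\cdot r_1\partial_z\widetilde v$ likewise since $\beta\in X^{s-\mez}_{ul}$ and $\partial_z\widetilde v\in X^\sigma_{ul}$. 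For $F_1=\chi_k\gamma\partial_z\widetilde v$ one uses the $\gamma$-estimate. Finally, to pass from $\|\widetilde v\|_{X^\sigma_{ul}}$ on the right to the stated $\|w\psi\|_{H^\sigma_{ul}}$, I would invoke the trace/boundary relation: $\widetilde v\arrowvert_{z=0}=w\widetilde u\arrowvert_{z=0}=w\psi$, and the definition of the $X^\sigma_{ul}(J)$ norm controls $\widetilde v$ in terms of its boundary value at $z=0$ and $\nabla_{x,z}\widetilde v$ (this is the standard $\|\widetilde v\|_{X^\sigma(J)}\lesssim \|\widetilde v\arrowvert_{z=0}\|_{H^\sigma}+\|\partial_z\widetilde v\|_{X^{\sigma-1}\cap L^2 H^{\sigma-\mez}}$-type inequality used throughout \cite{ABZ1,ABZ2}), whence $\|\widetilde v\|_{X^\sigma_{ul}(J)}\lesssim \|w\psi\|_{H^\sigma_{ul}}+\|\nabla_{x,z}\widetilde v\|_{X^\sigma_{ul}(J)}$, giving \eqref{est:F0F1} after taking $\sup_{k}$.

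The main obstacle I anticipate is not any single estimate but the uniformity in $k$ and the careful tracking of the weight. One must make sure that: (a) the implicit constants in all the product laws and in Lemma \ref{est-alpha} are genuinely independent of $q,k\in\xZ^d$ — this is where the hypothesis $w\in\mathcal W$, specifically condition (ii) of Definition \ref{classW}, is used, to guarantee that localizing by $\chi_k$ and conjugating by $w$ produces only $C$-independent-of-$k$ distortions; (b) the functions $r_1,r_1',r_2,r_2'$ really are the only trace of $w$ left after the computation producing \eqref{Lvk} and \eqref{Lvk}'s error terms — a direct expansion of $\mathcal L(w\widetilde u)$ and then $\chi_k\cdot(\,)$ confirms this, since $w$ is differentiated at most twice by $\mathcal L$ and not at all by the first-order terms beyond one derivative; and (c) the low-regularity term $\gamma$, which sits in $X^{s-\tdm}_{ul}$ rather than $X^{s-\mez}_{ul}$, is still high enough relative to $\sigma\le s-1$ for the product $\gamma\,\partial_z\widetilde v$ to land in $Y^{\sigma+\mez}$; this forces the restriction $\sigma\le s-1$ and is the reason the lemma is stated only in that range. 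Once these three points are handled, assembling \eqref{est:F0F1} is routine.
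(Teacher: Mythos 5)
Your proposal follows essentially the same route as the paper: reduce each summand of $F_0,F_1$ to a product of one of $\alpha,\beta,\gamma$ with one of $\widetilde v,\nabla_x\widetilde v,\partial_z\widetilde v$ (modulo $C^\infty_b$ multipliers $r_1,r_2,\nabla\chi_k,\Delta\chi_k$), bound these by the uniformly local product rule together with Lemma \ref{est-alpha}, and use the identity $\chi_j\widetilde v(x,z)=\chi_jw\psi(x)+\int_0^z\chi_j\partial_z\widetilde v\,d\tau$ to trade $\widetilde v$ for $w\psi$ plus $\partial_z\widetilde v$, which is exactly the paper's decomposition. One slip worth flagging: the inclusion $Y^{\sigma-\mez}(J)\subset Y^{\sigma+\mez}(J)$ you invoke for the $\gamma$-term runs the wrong way (the former is the larger, lower-regularity space), but it is also unneeded, since the relevant product estimate (Lemma $3.20$ of \cite{ABZ1}, cited by the paper) places $\gamma\,\partial_z\widetilde v$ directly in $Y^{\sigma+\mez}$ from $\gamma\in L^2(J,H^{s-1})_{ul}$ and $\partial_z\widetilde v\in X^\sigma_{ul}$ with $\sigma\le s-1$; similarly, the FTC step only controls the $L^\infty(J,H^\sigma)$ component of $\widetilde v$ rather than the full $X^\sigma_{ul}$-norm, which is all that is actually used.
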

\begin{proof}
1. It was proved in Lemma $3.20$, \cite{ABZ1} (applied with $\varepsilon=\mez$) that under the conditions of this lemma,
\[
\lA \gamma\partial_z\widetilde v\rA_{Y^{\sigma+\mez}(J)}\le C\lA \gamma\rA_{L^2(J, H^{s-1})}\lA \partial_z\widetilde v \rA_{X^\sigma(J)},
\]
whose proof uses only the regularity of $\gamma$ and $\partial_z \widetilde v$. By writing $\chi_k\gamma\partial_z\widetilde v=(\chi_k\gamma)(\widetilde\chi_k\partial_z\widetilde v)$ and using the proof of preceding estimate we obtain 
\begin{align*}
\lA F_1\rA_{Y^{\sigma+\mez}(J)}=\lA \chi_k\gamma\partial_z\widetilde v\rA_{Y^{\sigma+\mez}(J)}
&\le C\lA \gamma\rA_{L^2(J, H^{s-1}_{ul})}\lA \partial_z\widetilde v\rA_{X^\sigma_{ul}(J)}\\
&\le \mathcal{F}\big(\Vert  \eta \Vert_{H^{s+\mez}_{ul} }\bigr)\lA \partial_z\widetilde v\rA_{X^\sigma_{ul}(J)}.
\end{align*}
2. We turn to estimate $F_0$. All the terms containing either $\nabla_x\widetilde v$ or $\partial_z\widetilde v$ can be handled by the same method (remark that $r_1\in C^\infty_b(\xR^d)$). Let us consider for example $\alpha \nabla\chi_k\cdot\nabla_x\widetilde v$. There exists $M>0$ such that if $|k-j|>M$ then $\supp\chi_k\cap\supp\chi_j=\emptyset$. Therefore, it suffices to estimate $A=\chi_j\alpha \nabla\chi_k\cdot\nabla_x\widetilde v$ for $|j-k|\le M$.  We have the following product rule in Sobolev spaces (see for instance, Corollary $2.11$ $(i)$, \cite{ABZ1}): if $s_0\le s_1,~s_0\le s_2$,~$s_1+s_2>0$ and $s_0<s_1+s_2-{\frac{d}{2}}$ then there exists $C>0$ such that for all $u_1\in H^{s_1},~u_2\in H^{s_2}$ there holds
\bq\label{product:rule}
\lA u_1u_2\rA_{H^{s_0}}\le C\lA u_1\rA_{H^{s_1}}\lA u_2\rA_{H^{s_2}}.
\eq
The preceding result applied with $s_0=\sigma,~s_1=s,~s_2=\sigma$  together with Lemma \ref{est-alpha} leads to 
\[
\lA A\rA_{L^2(J, H^\sigma)}\le \lA \chi_k\alpha\rA_{L^2(J, H^s)}\lA \nabla\chi_k\cdot\nabla_x\widetilde v\rA_{L^\infty(J, H^\sigma)}\le \mathcal{F}\big(\Vert  \eta \Vert_{H^{s+\mez}_{ul} }\bigr)\lA \nabla_x \widetilde v\rA_{X^\sigma_{ul}}.
\]
3. We are left with two terms $\alpha\Delta\chi_k\widetilde v$ and $\chi_k \alpha r_2\widetilde v$, which can be treated in the same way (remark that $r_2\in C^\infty_b(\xR^d)$). Let us consider for example $\alpha\Delta\chi_k\widetilde v$. As in 2., one only need to estimate $\chi_j\alpha\Delta\chi_k\widetilde v$ for $j$ close to $k$. The product rule \eqref{product:rule} gives
\begin{align*}
\lA \chi_j\alpha\Delta\chi_k\widetilde v\rA_{Y^{s+\mez}(J)}&\le \lA \chi_j\alpha\Delta\chi_k\widetilde v\rA_{L^2(J, H^\sigma)}\\
&\le C\lA \alpha\Delta\chi_k\rA_{L^2(J, H^s)}\lA \chi_j\widetilde v\rA_{L^\infty(J, H^\sigma)}\\
&\le  C\lA \alpha\rA_{L^2(J, H^s_{ul})}\lA \chi_j\widetilde v\rA_{L^\infty(J, H^\sigma)}.
\end{align*}
Now, by writing 
\bq\label{NL}
\chi_j\widetilde v(x,z)= \chi_j\widetilde v(x, 0)+\int_0^z  \chi_j\partial_z\widetilde v(x, \tau)d\tau= \chi_jw\psi(x)+\int_0^z \chi_j\partial_z\widetilde v(x, \tau)d\tau,
\eq
we obtain
\begin{align*}
\lA \chi_j\widetilde v\rA_{L^\infty(J, H^\sigma)}\le \lA w\psi\rA_{H^\sigma_{ul}}+\lA \partial_z\widetilde v\rA_{L^\infty(J, H^\sigma)_{ul}}\le \lA w\psi\rA_{H^\sigma_{ul}}+\lA \partial_z\widetilde v\rA_{X^\sigma_{ul}(J)}.
\end{align*}
Consequently,
\[
\lA \chi_j\alpha\Delta\chi_k\widetilde v\rA_{Y^{\sigma+\mez}(J)}\le \cF(\lA \eta\rA_{H^{s+\mez}_{ul}})\big( \lA w\psi\rA_{H^\sigma_{ul}}+\lA \partial_z\widetilde v\rA_{X^\sigma_{ul}(J)}\big).
\]
\end{proof}
\begin{rema}\label{rema:s0}
Lemma \ref{estF0F1} is in the same spirit of Lemma $3.18$, \cite{ABZ2}. However, in Lemma $3.18$, \cite{ABZ2} the authors considered two cases corresponding to two  ranges of $\sigma$: $-\mez\le \sigma<s-\tdm$ and $s-\tdm\le \sigma\le s-1$. This aimed to keep in the estimate \eqref{est:F0F1} the function $\cF$ depending only on $\|\eta\|_{H^{s_0+\mez}_{ul}}$ for any $1+\frac{d}{2}<s_0\le s$, which appeared in their finial {\it a priori} estimate (see Proposition $4.7$, \cite{ABZ2}).  Here, however, for our contraction estimates we do not need this tame estimate. In fact, our contraction estimates shall be established in $1$-derivative lower Sobolev spaces, hence we even do not use weighted bounds in the highest norms. 
\end{rema}
Next, we prove an elliptic regularity theorem with weights for $\nabla_{x,z}\widetilde v$:
\begin{theo}\label{regell}
Let $J=(-1, 0)$, $s>1+{\frac{d}{2}}$ and $w\in \mathcal{W}_{ex}(\varrho),~\varrho\ge 0$. Let $\widetilde{u}$  be a solution of the problem (\ref{elliptique}) and set  $\widetilde{v}=w\widetilde{u}$. For $-\mez \leq \sigma \leq s-\mez$ let  $ \eta  \in H^{s + \mez}_{ul}(\xR^d)$ 
satisfying~\eqref{condition},  $w\psi \in H^{\sigma+1}_{ul}(\xR^d), F \in Y_{ul}^{\sigma}(J)$  and 
\begin{equation}\label{-mez-i}
\Vert \nabla_{x,z}\widetilde{v}\Vert_{X_{ul}^{-\mez}(J)}  <+\infty.
\end{equation}
Then for every $z_0\in ]-1,0[$ there exists $\mathcal{F} :\xR^+ \to \xR^+ $ non decreasing, 
depending only on $(s, d)$ and the semi-norms of $r_i,~r_i', i=1,2$ (in $C^\infty_b(\xR^d)$) such that\\
\bq\label{elliptic:1}
\Vert \nabla_{x,z}\widetilde{v} \Vert_{X^\sigma_{ul}(z_0,0)} 
\leq \mathcal{F} \bigl(\Vert  \eta  \Vert_{H^{s+ \mez}_{ul}}\bigr) 
\Big\{ \Vert w\psi \Vert_{H^{\sigma+1}_{ul} } +   \Vert wF \Vert_{Y_{ul}^{\sigma}(J)}  
+  \Vert \nabla_{x,z}\widetilde{v}\Vert_{X_{ul}^{-\mez}(J)} \Big\}.
\eq
Consequently,
\bq\label{elliptic:2}
\Vert w\nabla_{x,z}\widetilde{u} \Vert_{X^\sigma_{ul}(z_0,0)} 
\leq \mathcal{F} \bigl(\Vert  \eta  \Vert_{H^{s+ \mez}_{ul}}\bigr) 
\Big\{ \Vert w\psi \Vert_{H^{\sigma+1}_{ul} } +   \Vert wF \Vert_{Y_{ul}^{\sigma}(J)}  
+  \Vert w\nabla_{x,z}\widetilde{u}\Vert_{X_{ul}^{-\mez}(J)} \Big\}.
\eq
   \end{theo}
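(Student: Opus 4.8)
\emph{The plan} is to absorb the weight once and for all by passing to $\widetilde v=w\widetilde u$, to localise with the cut-offs $\chi_k$, and then to feed the resulting \emph{unweighted} equation \eqref{Lvk} for $\widetilde v_k=\chi_k\widetilde v$ into the elliptic-regularity scheme of \cite{ABZ2}. The only terms that are not already present in that scheme are $F_0,F_1$, and these are precisely the ones estimated in Lemma \ref{estF0F1}; the whole subtlety is then a matter of bookkeeping the regularity indices.

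\emph{Step 1 (reduction to a single cube).} Since $\chi_k$ does not depend on $z$ one has $\gamma\partial_z\widetilde v_k=\chi_k\gamma\partial_z\widetilde v=F_1$, so \eqref{Lvk} can be rewritten as
\[
(\mathcal L-\gamma\partial_z)\widetilde v_k=\chi_k wF+F_0,\qquad \widetilde v_k\arrowvert_{z=0}=\chi_k w\psi .
\]
The coefficients $\alpha,\beta,\gamma$ obey the bounds of Lemma \ref{est-alpha}, and by \eqref{-mez-i} together with the identity \eqref{NL} one has $\nabla_{x,z}\widetilde v_k\in X^{-\mez}(z',0)$ for every $z'\in(-1,0)$; thus $\widetilde v_k$, compactly supported in $x$, is an admissible input for the unweighted theory. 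Moreover $\chi_k\nabla_{x,z}\widetilde v=\nabla_{x,z}\widetilde v_k-((\nabla_x\chi_k)\widetilde v,0)$, and the commutator term, localised by $\widetilde\chi_k$, is controlled via \eqref{NL} by $\Vert w\psi\Vert_{H^\sigma_{ul}}+\Vert\nabla_{x,z}\widetilde v\Vert_{X^\sigma_{ul}(z_0,0)}$; hence, by the very definition of the uniformly local norms, it suffices to bound $\Vert\nabla_{x,z}\widetilde v_k\Vert_{X^\sigma(z_0,0)}$ by the right-hand side of \eqref{elliptic:1} with a constant uniform in $k\in\xZ^d$.

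\emph{Step 2 (running the elliptic scheme).} One now applies to $\widetilde v_k$ the elliptic-regularity estimate of \cite{ABZ1,ABZ2} for the operator $\mathcal L-\gamma\partial_z$ (the proof of which uses only the regularity of $\alpha,\beta,\gamma$ from Lemma \ref{est-alpha}): after paralinearising and factoring $\mathcal L-\gamma\partial_z$ into a forward- and a backward-parabolic first-order factor, one obtains the estimate by a \emph{finite} induction on the regularity index $\mu$, from $\mu=-\mez$ up to $\mu=\sigma$, along a shrinking nested family of subintervals $-1<z_N<\dots<z_1<z_0<0$. At the step that passes from level $\mu-\mez$ to level $\mu$, the forcing of the equation contributes, besides $\chi_k wF$, the term $F_0$; applying Lemma \ref{estF0F1} \emph{at the index $\mu-\mez$} (its proof localises in $z$, since one may freely shrink the $z$-interval in the bounds for $\gamma$ and in \eqref{NL}) gives
\[
\Vert F_0\Vert_{Y^{\mu}(z_1,0)}\le\mathcal F\bigl(\Vert\eta\Vert_{H^{s+\mez}_{ul}}\bigr)\bigl(\Vert w\psi\Vert_{H^{\mu-\mez}_{ul}}+\Vert\nabla_{x,z}\widetilde v\Vert_{X^{\mu-\mez}_{ul}(z_1,0)}\bigr),
\]
that is, in terms of $\nabla_{x,z}\widetilde v$ at the \emph{previous} inductive level, so the induction still closes; the base level $-\mez$ is furnished by the a priori bound \eqref{-mez-i}. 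Since the constants in Lemmas \ref{est-alpha} and \ref{estF0F1} and in the unweighted scheme are translation invariant, they are uniform in $k$; taking the supremum over $k$ and noting that the $\Vert w\psi\Vert_{H^{\mu-\mez}_{ul}}$- and $\Vert wF\Vert_{Y^{\mu}_{ul}}$-terms accumulated along the induction are all dominated by $\Vert w\psi\Vert_{H^{\sigma+1}_{ul}}$ and $\Vert wF\Vert_{Y^{\sigma}_{ul}(J)}$, we obtain \eqref{elliptic:1}.

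\emph{Step 3 (deducing \eqref{elliptic:2}, and the weight class).} Since $w=w(x)$, $\nabla_{x,z}\widetilde v=w\nabla_{x,z}\widetilde u+(r_1'\widetilde v,0)$ with $r_1'\in C^\infty_b(\xR^d)$, whence $\Vert w\nabla_{x,z}\widetilde u\Vert_{X^\sigma_{ul}(z_0,0)}\le\Vert\nabla_{x,z}\widetilde v\Vert_{X^\sigma_{ul}(z_0,0)}+C\Vert\widetilde v\Vert_{X^\sigma_{ul}(z_0,0)}$, and $\Vert\widetilde v\Vert_{X^\sigma_{ul}(z_0,0)}\les\Vert w\psi\Vert_{H^\sigma_{ul}}+\Vert\nabla_{x,z}\widetilde v\Vert_{X^\sigma_{ul}(z_0,0)}$ by \eqref{NL}; combining with \eqref{elliptic:1} gives \eqref{elliptic:2}. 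Throughout, the weight enters only through the $C^\infty_b$-seminorms of $r_1,r_1'$ (and of $r_2,r_2'$ from \eqref{defi:r2}), which are finite for every $w\in\mathcal W_{ex}(\varrho)$ by Definition \ref{classW}; this is why no restriction to polynomially growing weights is needed at this stage.

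\emph{Main obstacle.} The delicate point is the half-derivative bookkeeping in Step 2: $F_0$ carries a full copy of $\nabla_{x,z}\widetilde v$, so it cannot be absorbed by invoking Lemma \ref{estF0F1} at the current level $\mu$; one must invoke it one index below, which is exactly what forces the estimate to be produced by a finite induction over nested subintervals and what keeps the sole a priori hypothesis to be \eqref{-mez-i}. Checking that the unweighted scheme of \cite{ABZ2} tolerates the extra forcing $F_0$ at each of its internal inductive steps, with constants uniform in $k$ and depending on $w$ only through the stated seminorms, is the part that requires care.
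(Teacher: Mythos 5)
Your proposal follows essentially the same route as the paper: pass to the unweighted equation \eqref{Lvk} for $\widetilde v_k=\chi_k\widetilde v$, feed it into the elliptic-regularity scheme of \cite{ABZ2} (Propositions~3.16, 3.19, 3.20 there), absorb the extra forcing $F_0,F_1$ via Lemma~\ref{estF0F1} applied one index below (which is precisely what Remark~\ref{rema:s0} allows), and convert back to $w\nabla_{x,z}\widetilde u$ at the end. One small bookkeeping omission in your Step~3: after bounding $\Vert w\nabla_{x,z}\widetilde u\Vert_{X^\sigma_{ul}}\les\Vert\nabla_{x,z}\widetilde v\Vert_{X^\sigma_{ul}}+\Vert w\psi\Vert_{H^{\sigma+\mez}_{ul}}$ and inserting \eqref{elliptic:1}, the right-hand side still carries $\Vert\nabla_{x,z}\widetilde v\Vert_{X^{-\mez}_{ul}}$ rather than the $\Vert w\nabla_{x,z}\widetilde u\Vert_{X^{-\mez}_{ul}}$ that appears in \eqref{elliptic:2}; you also need the reverse inequality $\Vert\nabla_{x,z}\widetilde v\Vert_{X^{-\mez}_{ul}}\les\Vert w\psi\Vert_{H^{0}_{ul}}+\Vert w\nabla_{x,z}\widetilde u\Vert_{X^{-\mez}_{ul}}$, which the paper records and which follows by the same computation (the identity $\widetilde u\nabla_xw=r_1'w\widetilde u$ plus \eqref{NL}, using that $\sigma+1\ge 0$). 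With that supplied, the argument matches the paper's.
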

\begin{proof}
Estimate \eqref{elliptic:1} is a  consequence of Proposition $3.19$, Proposition $3.20$ and the proof of Proposition $3.16$ in \cite{ABZ2}, tacking into account Remark \ref{rema:s0}. We now derive \eqref{elliptic:2} using \eqref{elliptic:1}. Remark first that $\partial_z\widetilde v=w\partial_z\widetilde u$. Next, we write 
\[
w\nabla_x\widetilde u=\nabla_x\widetilde v-\widetilde u\nabla_xw,
\]
where 
\[
\widetilde u(x,z)\nabla_xw(x)=r'_1(x)w(x)\widetilde u(x, z)=r'_1(x)w(x)\Big( \widetilde u(x, 0)+\int_0^z\partial_z\widetilde u(x, \tau)d\tau\Big),
\]
which implies (using again $r'_1\in C^\infty_b(\xR^d)$)
\[
\lA \widetilde u\nabla_xw\rA_{X^\sigma_{ul}}\le C\big( \lA w\psi\rA_{H^{\sigma+\mez}_{ul}}+\lA w\partial_z \widetilde u\rA_{X^\sigma_{ul}}\big)\le C\big( \lA w\psi\rA_{H^{\sigma+\mez}_{ul}}+\lA \partial_z \widetilde v\rA_{X^\sigma_{ul}}\big).
\]
We have proved that 
\[
\lA w\nabla_{x,z}\widetilde u\rA_{X^\sigma_{ul}}\le C\big(\lA w\psi\rA_{H^{\sigma+\mez}_{ul}}+\lA \nabla_{x,z} \widetilde v\rA_{X^\sigma_{ul}}\big).
\]
Likewise, it holds that 
\[
\lA \nabla_{x,z}\widetilde v\rA_{X^{-\mez}_{ul}}\le C\big(\lA w\psi\rA_{H^0_{ul}}+\lA w\nabla_{x,z} \widetilde u\rA_{X^{-\mez}_{ul}}\big).
\]
The two inequalities above show that \eqref{elliptic:2} is a consequence of \eqref{elliptic:1} (notice that $\sigma+1\ge 0$).
\end{proof}
\begin{rema}\label{remark: -mez}
 We remark that in all the results stated below, the function $\mathcal{F}$  depends on $w$ only through the semi-norms of $r_i$ and $r'_i,~i=1,2$ in $C^\infty_b(\xR^d)$.
\end{rema}
To apply Theorem \ref{regell} we need a base estimate in the low norm $X^{-\mez}_{ul}$. For the proof of this, let us recall a classical interpolation result.
\begin{lemm}[\protect{\cite[Theorem~3.1]{Lions}}]\label{interpo}
Let $J = (-1,0)$ and $\sigma\in \xR$.  Let  $f \in L_z^2(J, H^{\sigma+ \mez}(\xR^d))$ be such that $\partial_z f \in L_z^2(J, H^{\sigma-\mez}(\xR^d))$. Then $f \in C_z^0([-1,0], H^{\sigma}(\xR^d))$ and there exists an absolute constant $C>0$ such that
$$
\Vert f\Vert_{C_z^0([-1,0], H^{\sigma}(\xR^d))} \leq C
\Vert f \Vert _{L_z^2(J, H^{\sigma+ \mez}(\xR^d))} +C\Vert \partial_zf \Vert_{L_z^2(J, H^{\sigma- \mez}(\xR^d))}.
$$
 \end{lemm}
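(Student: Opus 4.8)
The statement is the classical anisotropic trace (or interpolation) inequality of Lions--Magenes type, and the plan is to prove it by a direct Fourier argument in the $x$ variable. The first step would be a reduction to the case $\sigma=0$: since $\langle D_x\rangle^\sigma$ is an isometric isomorphism of $H^\tau(\xR^d)$ onto $H^{\tau-\sigma}(\xR^d)$ for every $\tau\in\xR$ and commutes with $\partial_z$, replacing $f$ by $\langle D_x\rangle^\sigma f$ turns the general claim into the special one: if $f\in L^2_z(J,H^{\mez}(\xR^d))$ and $\partial_z f\in L^2_z(J,H^{-\mez}(\xR^d))$, then $f\in C^0_z([-1,0],L^2(\xR^d))$ with $\Vert f\Vert_{C^0_z(L^2)}\lesssim \Vert f\Vert_{L^2_z(H^{\mez})}+\Vert\partial_z f\Vert_{L^2_z(H^{-\mez})}$.

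Next I would pass to the partial Fourier transform $g(z,\xi)=\widehat f(z,\xi)$. The hypotheses then read $\langle\xi\rangle^{\mez}g,\ \langle\xi\rangle^{-\mez}\partial_z g\in L^2(J\times\xR^d)$; by Fubini this forces $g(\cdot,\xi)\in H^1(J)\hookrightarrow C^0([-1,0])$ for a.e.\ $\xi$, which is where continuity in $z$ will ultimately come from. For such $\xi$ the map $z\mapsto|g(z,\xi)|^2$ is absolutely continuous with derivative $2\RE(\overline g\,\partial_z g)$, so for all $z,z'\in[-1,0]$ one has $|g(z,\xi)|^2=|g(z',\xi)|^2+2\RE\int_{z'}^z\overline{g}\,\partial_z g\,dt$. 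Averaging over $z'\in J$ (a set of measure $1$) and estimating $2|g||\partial_z g|\le\langle\xi\rangle|g|^2+\langle\xi\rangle^{-1}|\partial_z g|^2$ yields the key pointwise bound $|g(z,\xi)|^2\le h(\xi)$, uniformly in $z\in[-1,0]$, where $h(\xi):=\int_J\bigl(|g|^2+\langle\xi\rangle|g|^2+\langle\xi\rangle^{-1}|\partial_z g|^2\bigr)\,dt$.

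Finally, $h\in L^1(\xR^d)$ with $\int h\,d\xi\lesssim\Vert f\Vert_{L^2_z(H^{\mez})}^2+\Vert\partial_z f\Vert_{L^2_z(H^{-\mez})}^2$ by Plancherel, so integrating the pointwise bound in $\xi$ already gives the desired inequality with $\sup_z$ in place of $C^0_z$. To upgrade this to genuine continuity I would fix $z_\ast\in[-1,0]$ and $z_n\to z_\ast$: for a.e.\ $\xi$, $g(z_n,\xi)\to g(z_\ast,\xi)$ by the frequency-wise $C^0$ embedding, while $|g(z_n,\xi)-g(z_\ast,\xi)|^2\le 4h(\xi)\in L^1(\xR^d)$, so dominated convergence gives $\Vert f(z_n)-f(z_\ast)\Vert_{L^2}\to0$. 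The inequality itself is elementary; the only step needing a little care is precisely this passage from a uniform-in-$z$ bound to actual continuity (including at the endpoints $z=0,-1$), which the explicit dominating function $h$ makes routine.
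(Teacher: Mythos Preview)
Your proof is correct. Note, however, that the paper does not actually supply its own proof of this lemma: it is quoted verbatim as \cite[Theorem~3.1]{Lions} and used as a black box, so there is no ``paper's proof'' to compare against. Your Fourier-analytic argument---reduction to $\sigma=0$ via $\langle D_x\rangle^\sigma$, the fundamental theorem of calculus applied to $z\mapsto|g(z,\xi)|^2$, the averaging-in-$z'$ trick combined with the weighted Young inequality $2|g||\partial_z g|\le\langle\xi\rangle|g|^2+\langle\xi\rangle^{-1}|\partial_z g|^2$, and dominated convergence for the continuity upgrade---is one of the standard self-contained proofs of this Lions--Magenes trace inequality and is cleanly executed, including the endpoint continuity.
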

Recall also here the Poincar\'e inequality proved in \cite{ABZ2} (cf. Remark $3.2$) for fluid domains with finite depth of type $\Omega$ (cf. \eqref{defi:domain}). 
\begin{lemm}\label{lemm:Poincare}
Let $\theta,~\theta_*\in C^0_b(\xR^d)$ satisfying $\|\theta-\theta_*\|_{L^\infty(\xR^d)}>0$. Define
\[
\mathcal{O}=\{(x, y)\in \xR^d\times \xR: \theta_*(x)\le y\le \theta(x)\}
\]
and 
\[
H^{1,0}(\mathcal{O})=\{u\in L^2(\mathcal{O}): \nabla_{x,y}u\in L^2(\mathcal{O}) ~\text{and}~u\arrowvert_{y=\theta(x)}=0\}.
\]
Then for all $u\in H^{1,0}(\mathcal{O}),~\alpha\in C^\infty_b(\xR^d),~\alpha\ge 0$, there holds
\bq\label{Poincare}
\iint_{\mathcal{O}}\alpha(x)|u(x, y)|^2dxdy\le \Vert \theta-\theta_*\Vert_{L^\infty(\xR^d)}^2\iint_{\mathcal{O}}\alpha(x)|\partial_yu(x,y)|^2dxdy.
\eq
\end{lemm}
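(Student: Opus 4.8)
The plan is to reduce the weighted Poincar\'e inequality \eqref{Poincare} to the elementary one-dimensional Poincar\'e inequality on the vertical fibres of $\mathcal{O}$, and then integrate against the nonnegative weight $\alpha(x)\,dx$.

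First I would slice $\mathcal{O}$ in the vertical direction by Fubini's theorem: since $u\in L^2(\mathcal{O})$ and $\nabla_{x,y}u\in L^2(\mathcal{O})$, for a.e. $x\in\xR^d$ the partial function $y\mapsto u(x,y)$ lies in $H^1\big((\theta_*(x),\theta(x))\big)$ with weak derivative $y\mapsto\partial_y u(x,y)$, and the condition $u\arrowvert_{y=\theta(x)}=0$ forces the trace of this one-dimensional slice at the upper endpoint $y=\theta(x)$ to vanish for a.e. such $x$. This last point is the only place where a little care is needed, because $\theta,\theta_*$ are merely continuous and bounded, so $\mathcal{O}$ is only a region between two continuous graphs; I would handle it by approximating $u$ in $H^{1,0}(\mathcal{O})$ by functions that are smooth in $y$ and supported away from the upper graph (which is how $H^{1,0}(\mathcal{O})$ is built), proving the inequality for these and passing to the limit, or equivalently by invoking the fibrewise one-dimensional trace statement.

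Granting this, for a.e. $x$ and all $y\in(\theta_*(x),\theta(x))$ the fundamental theorem of calculus gives $u(x,y)=-\int_y^{\theta(x)}\partial_y u(x,s)\,ds$, so the Cauchy--Schwarz inequality yields
\[
|u(x,y)|^2\le(\theta(x)-y)\int_{\theta_*(x)}^{\theta(x)}|\partial_y u(x,s)|^2\,ds\le\Vert\theta-\theta_*\Vert_{L^\infty(\xR^d)}\int_{\theta_*(x)}^{\theta(x)}|\partial_y u(x,s)|^2\,ds .
\]
Integrating this in $y$ over $(\theta_*(x),\theta(x))$ and using $\theta(x)-\theta_*(x)\le\Vert\theta-\theta_*\Vert_{L^\infty(\xR^d)}$ once more gives, for a.e. $x$,
\[
\int_{\theta_*(x)}^{\theta(x)}|u(x,y)|^2\,dy\le\Vert\theta-\theta_*\Vert_{L^\infty(\xR^d)}^2\int_{\theta_*(x)}^{\theta(x)}|\partial_y u(x,y)|^2\,dy .
\]
Finally I would multiply this fibrewise bound by $\alpha(x)\ge0$, integrate in $x\in\xR^d$, and apply Fubini once more to recognise both sides as integrals over $\mathcal{O}$; since $\alpha\ge0$ everything is legitimate and the result is exactly \eqref{Poincare}. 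The only (mild) obstacle is thus the slicing/trace justification in a domain bounded by continuous graphs; the estimate itself is just the standard one-dimensional Poincar\'e inequality transported through the Fubini decomposition, and the $x$-dependent weight $\alpha$ enters only through its nonnegativity.
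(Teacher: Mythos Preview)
Your argument is correct and is exactly the standard fibrewise proof: write $u(x,y)=-\int_y^{\theta(x)}\partial_y u(x,s)\,ds$, apply Cauchy--Schwarz, integrate in $y$, then multiply by $\alpha(x)\ge 0$ and integrate in $x$. The paper does not actually give its own proof of this lemma; it quotes the result from \cite{ABZ2} (Remark~3.2) and merely observes that tracking that proof yields the explicit constant $\Vert\theta-\theta_*\Vert_{L^\infty}^2$, which is precisely what your computation shows.
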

\begin{rema}
In Remark $3.2$, \cite{ABZ2} the constant appearing in the Poincar\'e inequality is stated to be dependent only on $\|\theta\|_{L^\infty}+\|\theta_*\|_{L^\infty}$. However, it is easy to track the proof to derive the explicit constant $\Vert \theta-\theta_*\Vert_{L^\infty(\xR^d)}^2$ in \eqref{Poincare}.
\end{rema}
\begin{prop}\label{rec00}
Let $J=(-1,0 )$, $s>1+{\frac{d}{2}}$. Let $\Phi$ be the unique solution to (\ref{eqPhi}). Then the following statements hold true.\\
$(i)$ For every $w\in \mathcal{W}_{po}(\varrho),~\varrho\ge 0$, one can find a non decreasing function $\mathcal{F}: \xR^+ \to \xR^+$ such that
\[
 \Vert w\nabla_{x,z}\widetilde{\Phi}\Vert_{X^{-\mez}_{ul}(J)} \leq \mathcal{F}(\Vert \eta \Vert_{H^{s+\mez}_{ul}(\xR^d)}) \Vert  w\psi \Vert_{H^\mez_{ul}(\xR^d)}.
\]
$(ii)$ There exists an absolute constant $C_*=C_*(d)>0$ such that for all $w\in \mathcal{W}_{ex}(\varrho)$ and $\varrho\le \varrho_*$ with 
\bq\label{defi:varrho*}
\varrho_*C_*\lA \eta-\eta_*\rA^2_{L^\infty(\xR^d)}=\mez,
\eq
 one can find a non decreasing function $\mathcal{F}: \xR^+ \to \xR^+$ such that
\[
 \Vert w\nabla_{x,z}\widetilde{\Phi}\Vert_{X^{-\mez}_{ul}(J)} \leq \mathcal{F}(\Vert \eta \Vert_{H^{s+\mez}_{ul}(\xR^d)}) \Vert  w\psi \Vert_{H^\mez_{ul}(\xR^d)}.
\]
   \end{prop}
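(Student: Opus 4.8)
The plan is to estimate the Kato building blocks $\Phi_q=u_q+\underline{\psi}_q$ of $\Phi$ (cf.\ \S\ref{defiDN}) one block at a time, with a quantitative decay in $|x-q|$, to sum the blocks against $w$, and only afterwards to pass from weighted $L^2$ bounds to the $X^{-\mez}$-scale via the interpolation Lemma \ref{interpo}. Recall that $u_q=\Phi_q-\underline{\psi}_q\in H^{1,0}(\Omega)$ is the variational solution \eqref{eqvar}, that $\underline{\psi}_q$ extends $\psi_q=\chi_q\psi$ and is supported in $\{|x-q|\le2\}$ with the bounds of \S\ref{defiDN}, and that $\iint_\Omega\nabla_{x,y}\Phi_q\cdot\nabla_{x,y}\theta=0$ for all $\theta\in H^{1,0}(\Omega)$, the Neumann condition on $\Gamma$ being encoded in this identity. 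I would use two different decay mechanisms: a Caccioppoli iteration for the polynomial case $(i)$, and a weighted energy estimate for the exponential case $(ii)$; it is in the second that the hypothesis \eqref{defi:varrho*} enters.

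For $(i)$, fix $q$ and, for $R$ large, an annular cutoff $\zeta$ that vanishes on $\{|x-q|<R\}$, equals $1$ on $\{|x-q|>2R\}$, with $|\nabla\zeta|\lesssim 1/R$. Testing the identity for $\Phi_q$ against $\zeta^2u_q$ --- which has zero trace on $\Sigma$, and whose gradient is supported away from $\supp\underline{\psi}_q$ once $R\ge2$, so that the $\underline{\psi}_q$-term disappears --- expanding, and applying the Poincar\'e inequality of Lemma \ref{lemm:Poincare} to $u_q$ with the nonnegative weight $|\nabla\zeta|^2$, I expect to obtain, with $E(R):=\iint_{\Omega\cap\{|x-q|>R\}}|\nabla_{x,y}u_q|^2$,
\[
E(2R)\le\frac{C\,\|\eta-\eta_*\|_{L^\infty}^2}{R^2}\,E(R),\qquad R\ge R_0(\|\eta-\eta_*\|_{L^\infty}).
\]
Since the prefactor decreases dyadically, iterating gives $E(R)\lesssim_N(R/R_0)^{-N}E(R_0)$ for every $N$, and $E(R_0)\lesssim\mathcal{F}(\|\eta\|_{H^{s+\mez}_{ul}})\|\chi_q\psi\|_{H^\mez}^2$ by the unweighted $H^1_{ul}$-bound of \cite{ABZ2}. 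Hence $\|\chi_k\nabla_{x,y}u_q\|_{L^2(\Omega)}\lesssim_N\L{k-q}^{-N}\mathcal{F}^{1/2}\|\chi_q\psi\|_{H^\mez}$; no restriction on $\varrho$ is needed, since $\sum_q\L{k-q}^{\varrho-N}<\infty$ for $N$ large.

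For $(ii)$, fix $q$, a smooth $\phi_q\ge0$ with $\phi_q(x)=|x-q|$ for $|x-q|\ge1$ and $|\nabla\phi_q|\le1$, and a rate $\mu>\varrho$. Testing the identity for $\Phi_q$ against $e^{2\mu\phi_q}u_q$ (legitimate after a routine truncation of the weight and a passage to the limit), expanding $\nabla_{x,y}(e^{2\mu\phi_q}u_q)$, and estimating the resulting first-order term by Cauchy--Schwarz together with the Poincar\'e inequality of Lemma \ref{lemm:Poincare} applied to $e^{\mu\phi_q}u_q$, I expect to reach an inequality of the form
\[
\iint_\Omega e^{2\mu\phi_q}|\nabla_{x,y}u_q|^2\le\Theta\bigl(\mu,\|\eta-\eta_*\|_{L^\infty}\bigr)\iint_\Omega e^{2\mu\phi_q}|\nabla_{x,y}u_q|^2+C(\mu)\,\mathcal{F}(\|\eta\|_{H^{s+\mez}_{ul}})\,\|\chi_q\psi\|_{H^\mez}^2,
\]
where the Poincar\'e-absorption factor $\Theta$ is $<\mez$ exactly when $\mu$ stays below a threshold proportional to $\|\eta-\eta_*\|_{L^\infty}^{-2}$, and where the absolute constant $C_*$ in \eqref{defi:varrho*} is chosen (depending only on $d$) so that $\varrho_*$ lies strictly inside this admissible range. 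Then, for $w\in\mathcal{W}_{ex}(\varrho)$ with $\varrho\le\varrho_*$, one picks $\mu\in(\varrho,\text{threshold})$, absorbs the first term, and obtains the genuine exponential decay $\|\chi_k\nabla_{x,y}u_q\|_{L^2(\Omega)}\lesssim e^{-\mu|k-q|}\mathcal{F}^{1/2}\|\chi_q\psi\|_{H^\mez}$; the sum $\sum_q w(k)w(q)^{-1}e^{-\mu|k-q|}\lesssim\sum_q e^{(\varrho-\mu)|k-q|}$ then converges since $\mu>\varrho$.

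Finally, passing through the diffeomorphism \eqref{diffeo} (whose Jacobian satisfies $\partial_z\rho\ge c_0>0$) converts the above weighted $L^2(\Omega)$-bounds on $\nabla_{x,y}u_q$ into bounds on $\nabla_{x,z}\widetilde{\Phi}_q$ in $L^2_z(J,L^2)$, the $L^2_z$-component of $X^{-\mez}(J)$; and since $\widetilde{\Phi}_q$ solves the flattened equation \eqref{equ:modifie} with coefficients controlled by Lemma \ref{est-alpha}, Lemma \ref{interpo} --- applied to $\nabla_x\widetilde{\Phi}_q$ and to $\partial_z\widetilde{\Phi}_q+\beta\cdot\nabla_x\widetilde{\Phi}_q$, whose $\partial_z$-derivative lies in $L^2_z(J,H^{-1})$ by the equation --- upgrades these to the full $X^{-\mez}(J)$-norm, the decay in $|k-q|$ being carried along. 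Summing over $q$, and using $\sup_{\supp\chi_k}w\le Cw(k)$, $\|\chi_q\psi\|_{H^\mez}\le Cw(q)^{-1}\|w\psi\|_{H^\mez_{ul}}$ (a consequence of $r_1\in C^\infty_b$), and the two convergent series above, gives the two estimates of the Proposition, with $\mathcal{F}$ depending on $w$ only through $r_1$ in case $(i)$ and additionally through $\|\eta-\eta_*\|_{L^\infty}$ in case $(ii)$. The hard part will be the book-keeping in case $(ii)$: one must identify the precise shape of the absorption factor $\Theta$ --- which contributions quadratic in $\mu$ and which Poincar\'e-squares in $\|\eta-\eta_*\|_{L^\infty}$ appear after the integration by parts --- so that the threshold for absorbability is exactly the one in \eqref{defi:varrho*} with an absolute $C_*=C_*(d)$, and in particular leaves the strict gap $\mu>\varrho$ needed for the summation; the flattening and the interpolation bootstrap are otherwise routine within the framework of \cite{ABZ2}.
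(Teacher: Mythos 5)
Your approach is correct in spirit, but it takes a genuinely different route to the key weighted $L^2$ decay estimate, and it does more work than the paper does. The paper does not rederive the decay of $\nabla_{x,y}u_q$ at all: it simply cites Lemma~3.6 of \cite{ABZ2}, which already provides the \emph{exponential} bound $\|e^{\mu\langle x-q\rangle}\nabla_{x,y}u_q\|_{L^2(\Omega)}\le\mathcal{F}(\|\eta\|_{W^{1,\infty}})\|\psi_q\|_{H^{1/2}}$ for every $\mu$ with $\mu C_*\|\eta-\eta_*\|_{L^\infty}^2\le 1$. That one estimate handles both parts: for $(i)$, $e^{-\mu\langle k-q\rangle}$ dominates any polynomial in $\langle k-q\rangle$ so the sum over $q$ converges for any $\varrho\ge 0$; for $(ii)$, the paper saturates the constraint with $\mu C_*\|\eta-\eta_*\|^2_{L^\infty}=1$ and sets $\varrho_*$ to half of that, leaving the gap needed for the geometric series. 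Your proposal instead re-derives the decay from scratch via two distinct mechanisms: a Caccioppoli/hole-filling iteration for $(i)$ (which is self-contained and elegant, and does not require tracking the precise constant $C_*$) and a weighted energy estimate with weight $e^{2\mu\phi_q}$ for $(ii)$ (which is essentially a re-proof of the cited Lemma~3.6, including the Poincar\'e-absorption book-keeping that fixes $C_*$). Both mechanisms are correct and the Caccioppoli argument you sketch is sound --- testing against $\zeta^2 u_q$, applying Lemma~\ref{lemm:Poincare} with weight $|\nabla\zeta|^2$, and iterating dyadically indeed yields decay faster than any polynomial. What the paper's route buys is brevity and uniformity (one lemma for both cases); what your route buys is self-containment and, for $(i)$, independence from the shape of $C_*$.

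Two minor points about the end-game. First, you apply Lemma~\ref{interpo} to the grouped quantity $\partial_z\widetilde\Phi_q+\beta\cdot\nabla_x\widetilde\Phi_q$, whereas the paper interpolates $\partial_z\widetilde\Phi_q$ directly and bounds $\partial_z^2\widetilde\Phi_q$ in $L^2(J,H^{-1})$ from equation~\eqref{equ:modifie} by product rules with $\alpha,\beta,\gamma$. Your grouping works, but $\partial_z(\partial_z\widetilde\Phi_q+\beta\cdot\nabla_x\widetilde\Phi_q)$ produces the extra term $(\partial_z\beta)\cdot\nabla_x\widetilde\Phi_q$, so you must check that $\partial_z\beta$ is controllable at the regularity $s>1+\frac{d}{2}$ (it is, via $\partial_z^2\rho$ and the product rule \eqref{product:rule} with exponents $(-1,\,s-\frac{3}{2},\,0)$, but it is an extra check the paper's direct route avoids). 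Second, you should be explicit that the $\underline\psi_q$-contribution is harmless: since $\underline\psi_q$ is supported in $\{|x-q|\le 2\}$, $\nabla_{x,y}\underline\psi_q$ trivially has arbitrary decay and its $L^2$-norm is bounded by $\mathcal{F}\,\|\psi_q\|_{H^{1/2}}$, so it can be added to $u_q$ before flattening; this is exactly what the paper does when it passes from \eqref{uq:e} to the corresponding estimate for $\Phi_q$.
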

\begin{proof} We proceed in two steps.\\
{\it Step 1.}~ By Lemma 3.6, \cite{ABZ2} one can find an absolute constant $C_*>0$ such that for all $\mu>0$ satisfying
\bq
\mu C_*\lA \eta-\eta_*\rA^2_{L^\infty(\xR^d)}\le 1
\label{delta1}
\eq
there exists $\mathcal{F}:\xR^+\to \xR^+$ non decreasing such that for all $q\in \xZ^d$ we have
\bq\label{uq:e}
\|e^{\mu\langle x-q\rangle}\nabla_{x,y}u_q\|_{L^2(\Omega)}\le  \mathcal{F}(\|\eta\|_{W^{1,\infty}(\xR^d)})\|\psi_q\|_{H^{\frac{1}{2}}(\xR^d)}.
\eq
Using properties $(i)$ and $(ii)$ above of $\underline \psi_q$ (see section \ref{defiDN}), we see that (\ref{uq:e}) also holds for $u_q$ replaced by $\underline \psi_q$ for any $\mu>0$ and thus (\ref{uq:e}) is true for $u_q$ replaced by $\Phi_q=u_q+\underline \psi_q$, i.e., 
\[
\|e^{\mu\langle x-q\rangle}\nabla_{x,z}\Phi_q\|_{L^2(\Omega)}\le  \mathcal{F}(\|\eta\|_{W^{1,\infty}(\xR^d)})\|\psi_q\|_{H^{\frac{1}{2}}(\xR^d)}
\]
for any $\mu>0$ satisfying (\ref{delta1}).\\
Using the diffeomorphism (\ref{diffeo}) we deduce that
\bq\label{phiqw}
\|e^{\mu\langle x-q\rangle}\nabla_{x,z}\widetilde\Phi_q\|_{L^2(J, L^2(\xR^d))}\le  \mathcal{F}(\|\eta\|_{W^{1,\infty}(\xR^d)})\|\psi_q\|_{H^{\frac{1}{2}}(\xR^d)}.
\eq
Consider a weight $w$ that is a priori in $\mathcal{W}$. On the support of $\chi_k$, we have $w(x)e^{\mu\langle x-q\rangle}\sim w(k)e^{\mu\langle k-q\rangle}$. Hence by the product rule \eqref{product:rule} we have
\bq\label{phiqw:1}
\|\chi_kw\nabla_{x,z}\widetilde\Phi_q\|_{L^2(J, L^2(\xR^d))}\le Cw(k)e^{-\mu\langle k-q\rangle} \mathcal{F}(\|\eta\|_{W^{1,\infty}(\xR^d)})\|\psi_q\|_{H^{\frac{1}{2}}(\xR^d)},
\eq
from which it follows that 
\bq\label{sum:Phi}
 \begin{aligned}
\|\chi_k w\nabla_{x,z}\widetilde\Phi\|_{L^2(J, L^2(\xR^d))}&\le \sum_q\|\chi_k w\nabla_{x,z}\widetilde\Phi_q\|_{L^2(J, L^2(\xR^d))}\\
&\le C\sum_qw(k)e^{-\mu\langle k-q\rangle} \mathcal{F}(\|\eta\|_{W^{1,\infty}(\xR^d)})\|\psi_q\|_{H^{\frac{1}{2}}(\xR^d)}\\
& \le C\sum_qw(k)w^{-1}(q)e^{-\mu\langle k-q\rangle} \mathcal{F}(\|\eta\|_{W^{1,\infty}(\xR^d)})\|w\psi_q\|_{H^{\frac{1}{2}}(\xR^d)}.
 \end{aligned}
\eq
Now we distinguish two cases: \\
$(i)$ $w\in \mathcal{W}_{po}(\varrho),~\varrho\ge 0$. By definition, $w(k)w^{-1}(q)\le C\langle k-q\rangle^\varrho$ and thus the final sum in \eqref{sum:Phi} converges for any $\rho\ge 0$, which leads to 
\bq\label{step1}
\|\chi_k w\nabla_{x,z}\widetilde\Phi\|_{L^2(J, L^2(\xR^d))}\le  \mathcal{F}(\|\eta\|_{W^{1,\infty}(\xR^d)})\|w\psi\|_{H^{\frac{1}{2}}_{ul}(\xR^d)}.
\eq
$(ii)$ $w\in \mathcal{W}_{ex}(\varrho),~\varrho\ge 0$. Choosing $\mu,~\varrho_*$ such that 
\[
\mu C_*\lA \eta-\eta_*\rA_{L^\infty(\xR^d)}^2=1, ~\varrho_* C_*\lA \eta-\eta_*\rA_{L^\infty(\xR^d)}^2=\mez
\]
then the final sum in \eqref{sum:Phi} converges for all $\varrho\le \varrho_*$ and one also ends up with \eqref{step1}.\\
{\it Step 2.} Let us fix a weight $w$ as in $(i)$ or $(ii)$. To complete the proof of this lemma, it remains to show for any $k\in \xZ^d$ that
\bq\label{step2}
\Vert \chi_k w\nabla_{x,z}\widetilde\Phi\Vert_{L^{\infty}(J, H^{-\mez}(\xR^d))}\le  \mathcal{F}(\|\eta\|_{H^{s+\mez}_{ul}})\|w\psi\|_{H^{\frac{1}{2}}_{ul}(\xR^d)}.
\eq
By the interpolation Lemma \ref{interpo}
\[
\Vert w\chi_k\nabla_{x}\widetilde{\Phi}\Vert_{L^{\infty}(J, H^{-\frac{1}{2}})}\le  \|w\chi_k\nabla_{x}\widetilde{\Phi}\|_{L^2(J, L^2)}+\|w\chi_k\partial_z\nabla_{x}\widetilde{\Phi}\|_{L^2(J, H^{-1})}.
\]
The first term on the right-hand side is estimated by \eqref{step1}, so we need to estimate 
\[ M:=\|w\chi_k\nabla_{x}\partial_z\widetilde{\Phi}\|_{L^2(J, H^{-1})}.
\]
Notice that for any acceptable weight $\omega\in \mathcal{W}$, there holds with $\widetilde \chi\in C_0^{\infty}(\xR^d)$ and $\widetilde \chi=1$ on $\supp\chi$ that
 \bqa
\|\omega\chi_k\nabla_xf\|_{H^s(\xR^d)}&\le & \|\nabla(\omega\chi_kf)\|_{H^s(\xR^d)}+\|\omega\nabla_x\chi_kf\|_{H^s(\xR)^d}+\|\nabla_x\omega\chi_kf\|_{H^s(\xR^d)}\\
&\le & \|\chi_k\omega f\|_{H^{s+1}(\xR^d)}+\|\omega\nabla_x\chi_kf\|_{H^s(\xR^d)}+ \Vert r'_1\widetilde\chi_k\|_{H^s}\|\chi_k\omega f\|_{H^s(\xR^d)}
\eqa
where $r'_1=\frac{\nabla \omega}{\omega}$ as in Definition \ref{classW}. This implies 
\bq\label{Poincare:w}
\|\omega\chi_k\nabla_xf\|_{H^s(\xR^d)}\le C\|\omega f\|_{H^{s+1}_{ul}(\xR^d)},\quad \forall \omega\in \mathcal{W},~s\in \xR.
\eq
Applying this estimate and \eqref{step1} leads to
\[
M\le C\|w\partial_z\widetilde{\Phi}\|_{L^2(J, L^2)_{ul}}\le \mathcal{F}(\|\eta\|_{H^{s+\frac{1}{2}}_{ul}}) \|w\psi\|_{H^{\frac{1}{2}}_{ul}}.
\]
Finally, to obtain \eqref{step2} we shall prove
 \bq
\|w\partial_{z}\widetilde{\Phi}\|_{L^{\infty}(J, H^{-\frac{1}{2}})_{ul}}\le \mathcal{F}(\|\eta\|_{H^{s+\frac{1}{2}}_{ul}}) \|w\psi\|_{H^{\frac{1}{2}}_{ul}}.
\label{phiqw3}
\eq
Again, by interpolation, 
\[
\|w\chi_k\partial_z\widetilde{\Phi}\|_{L^{\infty}(J, H^{-\frac{1}{2}})}\le  \|w\chi_k\partial_z\widetilde{\Phi}\|_{L^2(J, L^2)}+\|w\chi_k\partial_z^2\widetilde{\Phi}\|_{L^2(J, H^{-1})}.
\]
It remains to estimate $A:=\|w\chi_k\partial_z^2\widetilde{\Phi}\|_{L^2(J, H^{-1})}$. Taking into account the fact that $\widetilde\Phi_q$ satisfies equation (\ref{equ:modifie}), we have
\[
A\le \sum_q A_{1,q}+A_{2,q}+A_{3,q},
\]
where by the product rule \eqref{product:rule} (remark that $s>1+\frac{d}{2}$ is sufficient), Lemma \ref{est-alpha} and \eqref{step1}, 
   \begin{equation*}
  \begin{aligned}
  A_{1,q} &=\Vert \chi_k w\alpha \Delta \widetilde{\Phi}_q\Vert_{L^2(J, H^{-1} )}\leq \Vert \alpha \Vert_{L^\infty(J,H^{s-\mez} )_{ul}} \Vert w\Delta \widetilde{\Phi}_q\Vert_{L^2(J, H^{-1}  )_{ul}},\\
  A_{2,q} &=  \Vert \chi_k w\beta \partial_z \nabla_x  \widetilde{\Phi}_q\Vert_{L^2(J, H^{-1} )}\leq \Vert \beta \Vert_{L^\infty(J,H^{s-\mez} )_{ul}} \Vert w\partial_z \nabla_x  \widetilde{\Phi}_q\Vert_{L^2(J, H^{-1}  )_{ul}},\\
  A_{3,q} &=  \Vert \chi_k w\gamma\partial_z   \widetilde{\Phi}_q\Vert_{L^2(J, H^{-1} )}\leq \Vert \gamma \Vert_{L^\infty(J,H^{s-\frac{3}{2}}) _{ul}}\Vert w\partial_z \widetilde{\Phi}_q \Vert_{L^2(J,L^2 )_{ul}}.
  \end{aligned}
\end{equation*}
Finally, to sum $\Vert w\nabla_{x,z} \widetilde{\Phi}_q \Vert_{L^2(J,L^2 )_{ul}}$ over $q\in \xZ^d$, one makes use of \eqref{phiqw:1} and argues as in \eqref{sum:Phi}. The proof of Proposition \ref{rec00} is complete.
\end{proof}
\begin{rema}
In statement $(ii)$ above, the function $\cF$ depends on $\varrho$, which is in turn bounded from above by $C\| \eta-\eta_*\|_{L^\infty}^{-2}$. Therefore, $\cF$ is really increasing in $\|\eta\|_{H^{s+\mez}_{ul}}$ if the fluid depth $ \| \eta-\eta_*\|_{L^\infty}$ is bounded from below by some positive constant.
\end{rema}
Using Proposition~\ref{rec00} as the ground step for the regularity Theorem \ref{regell} we now prove a weighted estimate for $\widetilde\Phi$ and its gradient.
 \begin{coro}\label{coroetape1}
  Let $J=(-1,0 )$, $s>1+{\frac{d}{2}}$. Let $\Phi$ be the unique solution to (\ref{eqPhi}). Then the following statements hold true.\\
$(i)$ For every $w\in \mathcal{W}_{po}(\varrho),~\varrho\ge 0$ and $-\mez\le \sigma\le s-\mez$, one can find a non decreasing function $\mathcal{F}: \xR^+ \to \xR^+$ such that  for any $z_0\in (-1, 0)$,
 $$\Vert  w\widetilde{\Phi}  \Vert_{X^{\sigma+1}_{ul}(z_0,0)}+  \Vert w \nabla_{x,z}\widetilde{\Phi} \Vert_{X^\sigma_{ul}(z_0,0)}  \leq \mathcal{F}\bigl(\Vert   \eta  \Vert_{H^{s+ \mez}_{ul}}\bigr)     \Vert w\psi \Vert_{H^{\sigma+1}_{ul}}.
$$
$(ii)$ For every $w\in \mathcal{W}_{ex}(\varrho)$ with $\varrho\le \varrho_*$ (defined by \eqref{defi:varrho*}) and $-\mez\le \sigma\le s-\mez$, one can find a non decreasing function $\mathcal{F}: \xR^+ \to \xR^+$ such that  for any $z_0\in (-1, 0)$,
 $$\Vert  w\widetilde{\Phi}  \Vert_{X^{\sigma+1}_{ul}(z_0,0)}+  \Vert w \nabla_{x,z}\widetilde{\Phi} \Vert_{X^\sigma_{ul}(z_0,0)}  \leq \mathcal{F}\bigl(\Vert   \eta  \Vert_{H^{s+ \mez}_{ul}}\bigr)     \Vert w\psi \Vert_{H^{\sigma+1}_{ul}}.
$$
\end{coro}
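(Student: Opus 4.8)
I would obtain Corollary~\ref{coroetape1} from Proposition~\ref{rec00} and the weighted elliptic regularity Theorem~\ref{regell} by a single bootstrap step. The idea is that Proposition~\ref{rec00} provides the base estimate at the lowest level $X^{-\mez}_{ul}$, and Theorem~\ref{regell}, applied to the \emph{homogeneous} equation, raises it directly to the level $X^\sigma_{ul}$ for any admissible $\sigma$. Both statements $(i)$ and $(ii)$ can be treated together: because $\mathcal{W}_{po}(\varrho)\subset\mathcal{W}_{ex}(\varrho)$, Proposition~\ref{rec00} (in its version $(i)$, resp.\ $(ii)$) and Theorem~\ref{regell} are available in each case, the only difference being that in case $(ii)$ one must carry along the restriction $\varrho\le\varrho_*$ (with $\varrho_*$ as in~\eqref{defi:varrho*}), since it is required by Proposition~\ref{rec00}$(ii)$. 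The structural point that eliminates any source term is that $\Phi$ solves the homogeneous problem~\eqref{eqPhi}, so after straightening $\widetilde\Phi$ solves~\eqref{elliptique} with $F=0$ and boundary datum $\psi$ (this is just~\eqref{equ:modifie} rewritten).

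Concretely, I would fix $\sigma\in[-\mez,s-\mez]$, $z_0\in(-1,0)$ and a weight $w$ as in $(i)$ or $(ii)$, and assume $w\psi\in H^{\sigma+1}_{ul}(\xR^d)$ (otherwise there is nothing to prove). Since $\sigma+1\ge\mez$ we have $\lA w\psi\rA_{H^\mez_{ul}}\le C\lA w\psi\rA_{H^{\sigma+1}_{ul}}$, so Proposition~\ref{rec00} gives
\[
\lA w\nabla_{x,z}\widetilde\Phi\rA_{X^{-\mez}_{ul}(J)}\le\cF\bigl(\lA\eta\rA_{H^{s+\mez}_{ul}}\bigr)\lA w\psi\rA_{H^{\sigma+1}_{ul}}<+\infty .
\]
In particular, since also $w\psi\in H^{0}_{ul}$, the finiteness hypothesis~\eqref{-mez-i} of Theorem~\ref{regell} holds for $\widetilde u=\widetilde\Phi$, $\widetilde v=w\widetilde\Phi$. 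Applying Theorem~\ref{regell} with $F=0$ and inserting the previous bound into the right-hand side of~\eqref{elliptic:2}, I get
\[
\lA w\nabla_{x,z}\widetilde\Phi\rA_{X^\sigma_{ul}(z_0,0)}\le\cF\bigl(\lA\eta\rA_{H^{s+\mez}_{ul}}\bigr)\lA w\psi\rA_{H^{\sigma+1}_{ul}} ,
\]
which is the gradient half of the asserted estimate, in both cases.

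It then remains to upgrade this to a bound on $\lA w\widetilde\Phi\rA_{X^{\sigma+1}_{ul}(z_0,0)}$, which is routine and simply reproduces the manipulations used to pass from~\eqref{elliptic:1} to~\eqref{elliptic:2} and in the proof of Lemma~\ref{estF0F1} (cf.~\eqref{NL}): one writes $w\widetilde\Phi(x,z)=w\psi(x)+\int_0^z w\partial_z\widetilde\Phi(x,\tau)\,d\tau$ to control the $z$-continuous and the $L^2_z$ parts of the $X^{\sigma+1}_{ul}$-norm, at $H^\sigma_{ul}$-regularity in $x$, by $\lA w\psi\rA_{H^{\sigma+1}_{ul}}+\lA w\partial_z\widetilde\Phi\rA_{X^\sigma_{ul}}$, and recovers the missing $x$-derivative from the elementary inequality $\lA\chi_k g\rA_{H^{\sigma+1}}\les\lA\nabla_x(\chi_k g)\rA_{H^\sigma}+\lA\chi_k g\rA_{H^\sigma}$ applied to $g=w\widetilde\Phi$, using $\nabla_x(w\widetilde\Phi)=w\nabla_x\widetilde\Phi+r_1'\,w\widetilde\Phi$ with $r_1'\in C^\infty_b(\xR^d)$ together with the gradient bound just obtained. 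Summing over the localization index $k\in\xZ^d$ with constants uniform in $k$ gives the stated inequality; along the way one checks, as in Remark~\ref{remark: -mez}, that $\cF$ depends on $w$ only through the $C^\infty_b$-semi-norms of $r_i,r_i'$, $i=1,2$.

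I do not expect a genuine obstacle at this stage, because the two substantial inputs are already in hand: the base $X^{-\mez}_{ul}$-estimate of Proposition~\ref{rec00}, whose proof performs the delicate summation over $q\in\xZ^d$ of the exponentially localized blocks $\widetilde\Phi_q$ (and it is precisely there that the restriction $\varrho\le\varrho_*$ for exponential weights is forced), and the weighted elliptic regularity Theorem~\ref{regell}. The only points that still require care are: keeping the constraint $\varrho\le\varrho_*$ throughout case $(ii)$; using $\lA w\psi\rA_{H^\mez_{ul}}\les\lA w\psi\rA_{H^{\sigma+1}_{ul}}$ to align the base norm with the target norm; and the uniformity in $k\in\xZ^d$ demanded by the $ul$-norms, which forces all auxiliary product estimates to be run locally, exactly as in the proofs above.
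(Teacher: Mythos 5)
Your proposal matches the paper's proof essentially step for step: Proposition~\ref{rec00} gives the $X^{-\mez}_{ul}$ base estimate, Theorem~\ref{regell} with $F=0$ lifts it to the gradient bound in $X^\sigma_{ul}$, and the fundamental-theorem-of-calculus argument~\eqref{NL} together with the weighted inequality $\Vert Wu\Vert_{H^{\sigma+1}_{ul}}\le C\bigl(\Vert W\nabla u\Vert_{H^{\sigma}_{ul}}+\Vert Wu\Vert_{H^{\sigma}_{ul}}\bigr)$ recovers the $X^{\sigma+1}_{ul}$ bound on $w\widetilde\Phi$. The only additions you make --- aligning $\Vert w\psi\Vert_{H^\mez_{ul}}\lesssim\Vert w\psi\Vert_{H^{\sigma+1}_{ul}}$ and spelling out the Leibniz step $\nabla_x(w\widetilde\Phi)=w\nabla_x\widetilde\Phi+r_1'w\widetilde\Phi$ --- are correct bookkeeping that the paper leaves implicit.
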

\begin{proof}
Observe that $\widetilde{\Phi}$ satisfies~\eqref{elliptique} with $F=0$. According to Proposition~\ref{rec00}, with the weight $w$ given either in $(i)$ or $(ii)$ we have 
\begin{equation*}
\Vert w\nabla_{x,z}\widetilde{\Phi}\Vert_{X^{-\mez}_{ul}(z_0,0)} \leq \mathcal{F}(\Vert \eta \Vert_{ H^{s_0+ \mez}_{ul}}) \Vert w\psi \Vert_{H^\mez_{ul} }<+\infty.
\end{equation*}
Theorem \ref{regell} then leads to the desired estimate for $\Vert w \nabla_{x,z}\widetilde{\Phi} \Vert_{X^\sigma_{ul}(z_0,0)}$. Consequently, the argument in \eqref{NL} leads to
\[
\Vert w\widetilde{\Phi}\Vert_{X^\sigma_{ul}(z_0,0)}\le \lA w\psi\rA_{H^{\sigma+\mez}}+ \Vert w\partial_z\widetilde{\Phi}\Vert_{X^\sigma_{ul}(z_0,0)}\le  \mathcal{F}\bigl(\Vert   \eta  \Vert_{H^{s+ \mez}_{ul}}\bigr)     \Vert w\psi \Vert_{H^{\sigma+1}_{ul}}.
\]
Finally, using the fact that for any $W\in \mathcal{W}$, 
\[
\Vert Wu\Vert_{H^{\sigma+1}_{ul}}\le C\big(\Vert W\nabla u\Vert _{H^{\sigma}_{ul}} +\Vert Wu\Vert _{H^{\sigma}_{ul}}\big)
\]
we derive 
\[
\Vert  w\widetilde{\Phi}  \Vert_{X^{\sigma+1}_{ul}(z_0,0)}\le  \mathcal{F}\bigl(\Vert   \eta  \Vert_{H^{s+ \mez}_{ul}}\bigr)     \Vert w\psi \Vert_{H^{\sigma+1}_{ul}}.
\]
\end{proof}
 Corollary \ref{coroetape1} implies  the following weighted estimate for the Dirichlet-Neumann operator, which is of independent interest.
 \begin{prop}\label{coroetape2}
Let $s>1+{\frac{d}{2}}$ and $\eta\in H^{s+\mez}(\xR^d)_{ul}$. Then the following statements hold true.\\
$(i)$ For every $w\in \mathcal{W}_{po}(\varrho),~\varrho\ge 0$ and $-\mez\le \sigma\le s-\mez$, one can find a non decreasing function $\mathcal{F}: \xR^+ \to \xR^+$ such that 
   $$\Vert w G(\eta)\psi \Vert_{H^\sigma_{ul}}  \leq \mathcal{F}\bigl(\Vert   \eta  \Vert_{H^{s+ \mez}_{ul}}\bigr)     \Vert w\psi \Vert_{H^{\sigma+1}_{ul}}.$$  
$(ii)$ For every $w\in \mathcal{W}_{ex}(\varrho)$ with $\varrho\le \varrho_*$ (defined by \eqref{defi:varrho*}) and $-\mez\le \sigma\le s-\mez$, one can find a non decreasing function $\mathcal{F}: \xR^+ \to \xR^+$ such that
   $$\Vert wG(\eta)\psi \Vert_{H^\sigma_{ul}}  \leq \mathcal{F}\bigl(\Vert   \eta  \Vert_{H^{s+ \mez}_{ul}}\bigr)     \Vert w\psi \Vert_{H^{\sigma+1}_{ul}}.$$  
\end{prop}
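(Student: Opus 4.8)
The plan is to derive Proposition \ref{coroetape2} directly from the weighted bound on $\widetilde\Phi$ established in Corollary \ref{coroetape1}, using the formula \eqref{G=} for $G(\eta)\psi$ in the flattened coordinates, namely
\[
G(\eta)\psi = \bigl(\Lambda_1\widetilde\Phi - \nabla_x\rho\cdot\Lambda_2\widetilde\Phi\bigr)\big\arrowvert_{z=0}
= \Bigl(\frac{1+|\nabla_x\rho|^2}{\partial_z\rho}\,\partial_z\widetilde\Phi - \nabla_x\rho\cdot\nabla_x\widetilde\Phi\Bigr)\Big\arrowvert_{z=0}.
\]
First I would fix a weight $w$ in the relevant class ($\mathcal{W}_{po}(\varrho)$ in case $(i)$, or $\mathcal{W}_{ex}(\varrho)$ with $\varrho\le\varrho_*$ in case $(ii)$) and apply Corollary \ref{coroetape1} with this $w$ and the given $\sigma\in[-\mez,s-\mez]$, to get, for a fixed $z_0\in(-1,0)$,
\[
\Vert w\widetilde\Phi\Vert_{X^{\sigma+1}_{ul}(z_0,0)} + \Vert w\nabla_{x,z}\widetilde\Phi\Vert_{X^\sigma_{ul}(z_0,0)} \leq \mathcal{F}\bigl(\Vert\eta\Vert_{H^{s+\mez}_{ul}}\bigr)\Vert w\psi\Vert_{H^{\sigma+1}_{ul}}.
\]
The point is that the weight $w$ commutes harmlessly with the differentiation in the $z$ variable (so $w\partial_z\widetilde\Phi = \partial_z(w\widetilde\Phi)$ and the trace at $z=0$ costs nothing by the interpolation Lemma \ref{interpo}), and it interacts with $\nabla_x$ only through the bounded logarithmic-derivative $r_1'$, as already exploited in the proof of Theorem \ref{regell} and Corollary \ref{coroetape1}.

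Next I would handle the nonlinear coefficients $\alpha,\beta$ (equivalently $\partial_z\rho$, $\nabla_x\rho$, $1+|\nabla_x\rho|^2$). These are unweighted functions controlled by $\Vert\eta\Vert_{H^{s+\mez}_{ul}}$ via Lemma \ref{est-alpha} (and the trace at $z=0$ is again fine), so multiplying the weighted factor $w\partial_z\widetilde\Phi$ or $w\nabla_x\widetilde\Phi$ by these coefficients is just an application of the product rule \eqref{product:rule} in uniformly local Sobolev spaces, localised as usual by writing $\chi_q = \chi_q\widetilde\chi_q$ and using that only finitely many translates of $\chi$ overlap. One must check that the Sobolev exponents line up: with $s>1+\frac d2$ and $-\mez\le\sigma\le s-\mez$, one has $\sigma\le s-\mez$ and $s-\mez+\sigma > \sigma + \frac d2$ (using $s>1+\frac d2$), so \eqref{product:rule} applies with $s_0=\sigma$, $s_1=s-\mez$, $s_2=\sigma$. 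Evaluating at $z=0$ and collecting the estimates yields
\[
\Vert wG(\eta)\psi\Vert_{H^\sigma_{ul}} \leq \mathcal{F}\bigl(\Vert\eta\Vert_{H^{s+\mez}_{ul}}\bigr)\Bigl(\Vert w\partial_z\widetilde\Phi\Vert_{X^\sigma_{ul}(z_0,0)} + \Vert w\nabla_x\widetilde\Phi\Vert_{X^\sigma_{ul}(z_0,0)}\Bigr),
\]
and the right-hand side is bounded by $\mathcal{F}(\Vert\eta\Vert_{H^{s+\mez}_{ul}})\Vert w\psi\Vert_{H^{\sigma+1}_{ul}}$ by the displayed consequence of Corollary \ref{coroetape1}.

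I do not expect a genuine obstacle here: the substance of the argument lies entirely in the elliptic regularity Theorem \ref{regell} and the base estimate Proposition \ref{rec00}, which have already been proven; the present step is a bookkeeping reduction. The only points requiring a little care are (a) verifying that the weight does not interfere with either the $z$-trace or the product estimates — this is where Remark \ref{remark: -mez} matters, i.e.\ the constants depend on $w$ only through the $C^\infty_b$ seminorms of $r_1,r_1'$ (and $r_2,r_2'$) — and (b) confirming the admissibility of the Sobolev indices in the product rule across the full range $-\mez\le\sigma\le s-\mez$. Cases $(i)$ and $(ii)$ differ only in which part of Corollary \ref{coroetape1} is invoked, so the two statements are proved by the identical computation, and the proof closes immediately after the product-rule estimate above.
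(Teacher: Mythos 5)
Your overall strategy (invoke Corollary \ref{coroetape1}, take the trace at $z=0$, multiply by the coefficients $\nabla_x\rho$, $(1+|\nabla_x\rho|^2)/\partial_z\rho$) is the same as the paper's, but the proposal understates the difficulty in the trace step, and this is where the argument has a genuine gap.

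You claim ``the trace at $z=0$ costs nothing by the interpolation Lemma \ref{interpo}.'' To trace $w\partial_z\widetilde\Phi$ into $H^\sigma_{ul}$ via that lemma you would need $w\partial_z^2\widetilde\Phi\in L^2(J,H^{\sigma-\mez})_{ul}$, and the only route to such a bound is the elliptic equation $\partial_z^2\widetilde\Phi=-\alpha\Delta_x\widetilde\Phi-\beta\cdot\nabla_x\partial_z\widetilde\Phi+\gamma\partial_z\widetilde\Phi$. The coefficient $\gamma$ is only controlled in $X^{s-\frac{3}{2}}_{ul}$ (one derivative rougher than $\alpha,\beta$, by Lemma \ref{est-alpha}), and the product rule \eqref{product:rule} applied to $\gamma\cdot w\partial_z\widetilde\Phi$ in $L^2(J,H^{\sigma-\mez})$ with $s_1=s-\frac{3}{2}$ does not close across the full range $-\mez\le\sigma\le s-\mez$. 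The paper avoids this entirely by never tracing $\nabla_{x,z}\widetilde\Phi$ component by component. Instead it traces the whole expression $H:=\Lambda_1\widetilde\Phi-\nabla_x\rho\cdot\Lambda_2\widetilde\Phi$ at once, and the decisive step is the algebraic identity $\partial_z H=-\nabla_x\bigl((\partial_z\rho)\Lambda_2\widetilde\Phi\bigr)$ (equation $(3.21)$ of \cite{ABZ2}), which writes $\partial_z H$ as a spatial divergence of a quantity that \emph{is} controlled in $L^2(J,H^{\sigma+\mez})_{ul}$; hence $\partial_zH\in L^2(J,H^{\sigma-\mez})_{ul}$ without ever invoking $\gamma$ or $\partial_z^2\widetilde\Phi$. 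That identity is the substance of the proof, not a bookkeeping detail.

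Relatedly, your choice of Sobolev indices $s_0=\sigma,~s_1=s-\mez,~s_2=\sigma$ for the product rule is appropriate only if one really is multiplying at the $H^\sigma$ level at $z=0$, which is the step you have not justified. The paper performs the multiplication at the $L^2(J,H^{\sigma+\mez})_{ul}$ level before tracing, with $s_0=\sigma+\mez,~s_1=s-\mez,~s_2=\sigma+\mez$; there the constraint $s_0\le s_1$ forces $\sigma\le s-1$ and necessitates a separate argument (a tame product estimate using $\sigma+\mez>\frac{d}{2}$) for the endpoint range $s-1\le\sigma\le s-\mez$. You should at least acknowledge this dichotomy, since it is a consequence of the correct trace mechanism. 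To repair your proposal, replace the vague trace-then-multiply step by the interpolation Lemma applied to $H$ together with the identity for $\partial_z H$, and then mirror the paper's two-case analysis for the product rule.
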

\begin{proof}
Let $w$ be the weight as in $(i)$ or $(ii)$. By \eqref{G=},
\[
G(\eta)\psi=\big( \Lambda_1\widetilde{\Phi}- \nabla_x \rho\cdot\Lambda_2 \widetilde{\Phi}\big)\arrowvert_{z=0}=:H\arrowvert_{z=0}.
\]
Owing to Lemma \ref{interpo}, we have for any $J=(z_0, 0)\subset (-1, 0)$
\[
\Vert \chi_q wG(\eta)\psi\Vert_{H^\sigma} \le C\big(\Vert \chi_qwH\Vert_{L^2(J, H^{\sigma+\mez})}+\Vert \chi_qw\partial_z H\Vert_{L^2(J, H^{\sigma-\mez})}\big).
\]
For the term $\Vert \chi_qw\partial_z H\Vert_{L^2(J, H^{\sigma-\mez})}$ we make use of the following identity (see $(3.21)$, \cite{ABZ2})
\[
\partial_z H=-\nabla\big((\partial_z\rho)\Lambda_2\widetilde\Phi\big)
\]
to have 
\[
\Vert \chi_qw\partial_z H\Vert_{L^2(J, H^{\sigma-\mez})}\le \Vert w(\partial_z\rho)\Lambda_2\widetilde\Phi\Vert_{L^2(J, H^{\sigma+\mez})_{ul}}.
\]
 On the other hand, we observe by definition \eqref{lambda} of $\Lambda_{1,2}$ that the terms in $H$ have the same structure as $(\partial_z\rho)\Lambda_2\widetilde\Phi$ and thus, it suffices to prove, for example, that
\[
\Vert w\nabla_x\rho\cdot\nabla_x\widetilde\Phi\Vert_{L^2(J, H^{\sigma+\mez})_{ul}}\le \mathcal{F}\bigl(\Vert   \eta  \Vert_{H^{s+ \mez}_{ul}}\bigr)\Vert w\psi \Vert_{H^{\sigma+1}_{ul}}.
\]
By virtue of Corollary \ref{coroetape1}, this reduces to  proving
\bq\label{boundDN}
\Vert w\nabla_x\rho\cdot\nabla_x\widetilde\Phi\Vert_{L^2(J, H^{\sigma+\mez})_{ul}}\le \mathcal{F}\bigl(\Vert   \eta  \Vert_{H^{s+ \mez}_{ul}}\bigr)\Vert w\nabla_{x,z}\widetilde\Phi\Vert_{X^\sigma_{ul}}.
\eq
We consider two cases:\\
{\it Case 1:} $-\mez\le \sigma\le s-1$. 
We apply the product rule \eqref{product:rule} with $s_0=\sigma+\mez,~s_1=s-\mez,~s_2=\sigma+\mez$ to obtain
\[
\Vert \chi_kw\nabla_x\rho\cdot\nabla_x\widetilde\Phi\Vert_{L^2(J, H^{\sigma+\mez})}\le C\Vert \widetilde\chi_k\nabla_x\rho\Vert_{L^\infty(J, H^{s-\mez})}\Vert \chi_kw\nabla_x\widetilde\Phi\Vert_{L^2(J, H^{\sigma+\mez})},
\]
from which \eqref{boundDN} follows in view of Lemma \ref{est-alpha}.\\
{\it Case 2:} $s-1\le \sigma\le s-\mez$. Since $\sigma+\mez>\frac{d}{2}$ we have the following well-known inequality
\[
\| ab\|_{H^{\sigma+\mez}}\les \|a\|_{L^\infty}\|b\|_{H^{\sigma+\mez}}+\|a\|_{H^{\sigma+\mez}}\|b\|_{L^\infty}.
\]
On the other hand,  $\sigma+\mez\le s$ and $\sigma\ge s-1>\frac{d}{2}$ so
\[
\| ab\|_{H^{\sigma+\mez}}\les \|a\|_{H^{s-\mez}}\|b\|_{H^{\sigma+\mez}}+\|a\|_{H^s}\|b\|_{H^\sigma}.
\]
Applying the preceding inequality to $a=\widetilde\chi_k\nabla_x\rho$ and $b=\chi_kw\nabla_x\widetilde\Phi$ yields
\begin{multline*}
\Vert \chi_kw\nabla_x\rho\cdot\nabla_x\widetilde\Phi\Vert_{L^2(J, H^{\sigma+\mez})}\le \|\widetilde\chi_k\nabla_x\rho\|_{L^\infty(J,H^{s-\mez})}\|\chi_kw\nabla_x\widetilde\Phi\|_{L^2(J, H^{\sigma+\mez})}\\
+\|\widetilde\chi_k\nabla_x\rho\|_{L^2(J, H^{s})}\|\chi_kw\nabla_x\widetilde\Phi\|_{L^\infty(J, H^{\sigma})}.
\end{multline*}
Lemma \ref{est-alpha} then implies the desired estimate \eqref{boundDN}. The proof is complete.
\end{proof}
\begin{rema}
Several comments are in order about Proposition \ref{coroetape2}. Let us recall a relating result on the exponential decay of the Dirichlet-Neumann operator (for $d=1$) obtained by Ming-Rousset-Tzvetkov \cite{MiRoTz}:
\begin{prop}[\protect{\cite[Proposition~3.2]{MiRoTz}}]\label{exp:MRT}
Assume that $\eta_*=-H$ and $\psi\in C^\infty_b(\xR)$ having an exponential decay:
\[
\exists \ld>0,~\forall j\in \xN,~j\ge 1, \exists C_j>0,~\forall x\in \xR,~|\partial^j_x\psi(x)|\le C_j e^{-\ld |x|}.
\]
Then for $\eta\in H^\infty(\xR)$ with $\eta-H\ge h>0$, $G(\eta)\psi$ also has an exponential decay, that is, there exist $0<\eps<\ld$ such that for any $j\in \xN$ we can find a constant $C'_j>0$ such that
\[
|\partial^j_xG(\eta)\psi)(x)|\le C'_je^{-\eps |x|}.
\]
\end{prop}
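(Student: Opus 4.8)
The plan is to obtain Proposition \ref{exp:MRT} as a consequence of the weighted bound of Proposition \ref{coroetape2} $(ii)$, fed with \emph{one-sided} exponential weights. Since $d=1$ and $\psi'$ decays exponentially, $\psi$ has finite limits $m_\pm:=\psi(\pm\infty)$, and $\psi-m_\pm$ --- together with all of its derivatives --- decays like $e^{-\ld|x|}$ as $x\to\pm\infty$ while remaining bounded on the complementary half-line. I would fix $\eps:=\min\{\ld/2,\varrho_*\}\in(0,\ld)$, where $\varrho_*>0$ is the constant of \eqref{defi:varrho*} attached to the finite depth $\lA\eta-\eta_*\rA_{L^\infty}$, and choose smooth functions $\phi_\pm\ge 0$ with $\phi_+(x)=x$ for $x\ge 1$, $\phi_+\equiv 0$ for $x\le 0$, $\phi_-(x)=\phi_+(-x)$, and $\phi_\pm'\in C^\infty_b(\xR)$. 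Setting $w_\pm:=e^{\eps\phi_\pm}$, one checks quickly that $w_\pm\in\mathcal{W}_{ex}(\eps)$: indeed $\nabla w_\pm/w_\pm=\pm\eps\phi_\pm'\in C^\infty_b(\xR)$, the local comparability (ii) of Definition \ref{classW} is immediate, and $w_\pm(x)w_\pm^{-1}(y)\le e^{\eps|x-y|}\le e^{\eps\langle x-y\rangle}$; in particular the constraint $\varrho=\eps\le\varrho_*$ required in Proposition \ref{coroetape2} $(ii)$ holds.

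Next, since $G(\eta)$ annihilates constants we have $G(\eta)\psi=G(\eta)(\psi-m_\pm)$ for either choice of sign. Because $w_+$ grows only near $+\infty$ and at rate $\eps<\ld$, while $\psi-m_+$ and all its derivatives decay faster there, we get $w_+(\psi-m_+)\in C^\infty_b(\xR)\subset H^{\sigma+1}_{ul}(\xR)$ for every $\sigma$, and likewise for $w_-(\psi-m_-)$. As $\eta\in H^\infty(\xR)$ we have $\lA\eta\rA_{H^{s+\mez}_{ul}}<\infty$ for every $s$, so Proposition \ref{coroetape2} $(ii)$, applied with the weight $w_\pm$ and with $s$ (hence $\sigma\le s-\mez$) as large as we please, yields
\[
\lA w_\pm\,G(\eta)\psi\rA_{H^\sigma_{ul}(\xR)}=\lA w_\pm\,G(\eta)(\psi-m_\pm)\rA_{H^\sigma_{ul}(\xR)}\le\mathcal{F}\bigl(\lA\eta\rA_{H^{s+\mez}_{ul}}\bigr)\,\lA w_\pm(\psi-m_\pm)\rA_{H^{\sigma+1}_{ul}}<\infty .
\]
I would then convert this into the pointwise statement: by the embedding $H^\sigma_{ul}(\xR)\hookrightarrow W^{\sigma-\mez,\infty}(\xR)$ recalled in the Introduction, $w_+G(\eta)\psi\in W^{N,\infty}(\xR)$ for any prescribed $N$ (take $\sigma$ large); on $\{x\ge 1\}$, where $w_+=e^{\eps x}$, expanding $\partial_x^j\!\bigl(e^{\eps x}G(\eta)\psi\bigr)$ by Leibniz and inducting on $j$ gives $|\partial_x^jG(\eta)\psi(x)|\le C_je^{-\eps x}$ there. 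Running the same argument with $w_-$ on $\{x\le -1\}$, and using boundedness of the derivatives of $G(\eta)\psi$ on the compact set $\{|x|\le 1\}$ (again from the displayed bound), one arrives at $|\partial_x^jG(\eta)\psi(x)|\le C_j'e^{-\eps|x|}$ on $\xR$ for every $j$, with $0<\eps<\ld$, which is the assertion.

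The substance of the matter is already contained in Proposition \ref{coroetape2} $(ii)$; what remains genuinely delicate here is that $\psi$ itself need \emph{not} decay --- only its derivatives do --- so one cannot apply Proposition \ref{coroetape2} $(ii)$ to $\psi$ with a two-sided exponential weight directly, and must instead subtract the limiting constant $m_\pm$ at each end and exploit $G(\eta)\,\mathrm{const}=0$; this is exactly what forces the one-sided cut-off weights $w_\pm$, and I expect it to be the only point needing care. I would also stress that the admissible decay rate $\eps$ is capped by $\varrho_*$, which via \eqref{defi:varrho*} and the finite-depth energy estimate \eqref{uq:e} encodes the finiteness of the fluid depth --- in accordance with the heuristics of \S\ref{explanation}, and consistent with \cite{MiRoTz} only claiming some $\eps\in(0,\ld)$ rather than $\eps=\ld$.
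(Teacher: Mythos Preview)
The paper does not give its own proof of Proposition \ref{exp:MRT}: the statement is quoted verbatim from \cite[Proposition~3.2]{MiRoTz} inside a Remark, solely to set up the comparison with Proposition \ref{coroetape2}. So there is no in-house argument to compare yours against.

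That said, your derivation of the cited result from Proposition \ref{coroetape2} $(ii)$ is correct and is exactly the kind of link the Remark is hinting at (points $(i)$--$(ii)$ there). The two genuinely nontrivial points you identify are the right ones: (a) one cannot feed the two-sided weight $e^{\eps\langle x\rangle}$ into Proposition \ref{coroetape2} because only the derivatives of $\psi$ decay, which is why you subtract the limits $m_\pm$ and use $G(\eta)\,1=0$ together with the one-sided weights $w_\pm$; and (b) the admissible rate is capped by $\varrho_*$ from \eqref{defi:varrho*}, forcing $\eps<\ld$ in general. One small technical caveat: as you wrote it, $\phi_+$ passes from $0$ at $x=0$ to $x$ at $x=1$, so $\phi_+'$ cannot stay $\le 1$ on $[0,1]$; either relax the transition interval so that $\lA\phi_+'\rA_{L^\infty}\le 1$ (then $w_+\in\mathcal W_{ex}(\eps)$ exactly as you claim), or accept $w_+\in\mathcal W_{ex}(\eps\lA\phi_+'\rA_{L^\infty})$ and shrink $\eps$ accordingly --- either fix is harmless. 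Everything else (the $C^\infty_b$ membership of $w_\pm(\psi-m_\pm)$, the Sobolev embedding, and the Leibniz induction to extract pointwise decay of derivatives from $w_\pm G(\eta)\psi\in W^{N,\infty}$) is routine.
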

1. The advantage of Proposition \ref{exp:MRT} is that  it does not assume decay on $\psi$ itself but its derivatives, which is compatible with the solitons studied there. The authors were not interested in the way the estimates depend on the regularity of the surface $\eta$. \\ 
2.  Proposition \ref{exp:MRT}  is asymmetric in the sense that the exponential decay of $G(\eta)\psi$ is lower than the decay of $\psi$.\\
3.  Proposition \ref{coroetape2} assumes also that $\psi$ is decay (choosing for example $w(x)=e^{\varrho\langle x\rangle}$). However, compare to Proposition \ref{exp:MRT}, it has the following advantages:
\begin{enumerate}
\item[(i)]  Proposition \ref{coroetape2} holds in any dimension, with varying bottom ($\eta_*$ is only assumed to be in $C^0_b$) and allows domains with non smooth surfaces ($\eta\in C^{2+\eps}$).\\
\item [(ii)] The decay rate of $G(\eta)\psi$ is preserved, i.e., the same as the rate of $\psi$. Moreover, the exponential rate $\varrho$ of $\psi$ can be chosen as
\bq\label{explain:varrho}
\varrho\le \frac{1}{2C_*\Vert \eta-\eta_*\Vert^2_{L^\infty}}
\eq
(for some absolute constant $C_*=C_*(d)$) which is decreasing in the square of the fluid depth. Such a rate is therefore higher for shallow water but deteriorates when the depth tends to infinity. 
\end{enumerate}
\end{rema}
\subsection{Paralinearization of the Dirichlet--Neumann operator}
 We  denote by $\kappa$ the principal symbol of the Dirichlet-Neumann operator:
 $$
\kappa=  \big((1+ \vert \nabla_x \eta\vert^2 ) \vert \xi \vert^2 -(\nabla_x \eta \cdot \xi)^2\big)^\mez
$$
and define the remainder
 \begin{equation}\label{reste}
 R(\eta)\psi\ := G(\eta)\psi\ -T_{\kappa} \psi.
 \end{equation}
Our aim in this section is to prove the following weighted version of Theorem 3.11 in \cite{ABZ2}.
\begin{theo}\label{paralinDN}
Let  $s>1+{\frac{d}{2}}$ and $w\in \mathcal{W}_{po}(\varrho),~\varrho\ge 0$. Then there exists  $\mathcal{F}:\xR^+ \to \xR^+$ non decreasing such that for $0\leq t \leq s-\mez, $  $\eta  \in H^{s+\mez}_{ul}(\xR^d) $ satisfying~\eqref{condition} we have 
 $$\Vert wR(\eta)\psi\Vert_{H^{t }_{ul} } \leq \mathcal{F}\bigl(\Vert  \eta  \Vert_{  H^{s+ \mez}_{ul}} \bigr)
  \Vert w\psi\ \Vert_{H^{t+\mez}_{ul}} 
$$
provided that $w\psi \in H^{t+\mez}_{ul}(\xR^d)$.
\end{theo}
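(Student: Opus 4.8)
The plan is to reproduce the proof of Theorem~3.11 in \cite{ABZ2}, conjugating every step by the weight $w$, and to feed in at the relevant places the weighted elliptic estimates of Corollary~\ref{coroetape1} together with the paradifferential calculus in weighted uniformly local spaces developed in Appendix~\ref{paramachinery}. First I would let $\widetilde\Phi$ be the straightened potential, i.e.\ the solution of \eqref{elliptique} with $F=0$, and set $\widetilde v:=w\widetilde\Phi$. As recorded just before Lemma~\ref{estF0F1}, $\widetilde v$ solves
\[
\mathcal{L}\widetilde v+(\beta\cdot r_1-\gamma)\partial_z\widetilde v+\alpha r_2\widetilde v+2\alpha r_1\cdot\nabla_x\widetilde v=0,
\]
that is, the \emph{same} second order operator $\mathcal{L}=\partial_z^2+\alpha\Delta_x+\beta\cdot\nabla_x\partial_z$ of \eqref{equ:modifie}, perturbed by terms of order at most one whose coefficients involve only $\alpha,\beta,\gamma$ and the functions $r_1,r_2\in C^\infty_b(\xR^d)$ attached to $w$. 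Corollary~\ref{coroetape1} controls $\|w\widetilde\Phi\|_{X^{\sigma+1}_{ul}(z_0,0)}+\|w\nabla_{x,z}\widetilde\Phi\|_{X^{\sigma}_{ul}(z_0,0)}$ by $\mathcal{F}\big(\|\eta\|_{H^{s+\mez}_{ul}}\big)\|w\psi\|_{H^{\sigma+1}_{ul}}$ for $-\mez\le\sigma\le s-\mez$, which plays the role of the a priori elliptic bound used in \cite{ABZ2}.

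Next I would carry out the factorization of this (perturbed) elliptic operator, following \cite{ABZ2}. The symbol $X^2+i(\beta\cdot\xi)X-\alpha|\xi|^2$ has two roots; since $4\alpha|\xi|^2-(\beta\cdot\xi)^2\gtrsim|\xi|^2$, one of them, $a$, satisfies $\RE a\le -c|\xi|$, and the other, $A$, is the root that produces, through \eqref{G=}--\eqref{lambda} evaluated at $z=0$, exactly the principal symbol $\kappa$ of \eqref{reste}. Because the extra terms in the equation for $\widetilde v$ are of order at most one, they affect only the subprincipal symbols and leave $A$, hence $\kappa$, unchanged; so, using the calculus of Appendix~\ref{paramachinery} in place of the usual symbolic calculus, I would write the operator as $(\partial_z-T_{\widetilde a})(\partial_z-T_A)$ plus a remainder of the type handled in \cite{ABZ2}. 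Setting $g:=\partial_z\widetilde v-T_A\widetilde v$, this gives a forward parabolic equation $(\partial_z-T_{\widetilde a})g=f$ with $f$ smoothing, and the weighted parabolic regularity of Appendix~\ref{paramachinery} combined with Corollary~\ref{coroetape1} yields, with the same $\mez$ gain as in \cite{ABZ2},
\[
\|g\arrowvert_{z=0}\|_{H^{t}_{ul}}\le\mathcal{F}\big(\|\eta\|_{H^{s+\mez}_{ul}}\big)\|w\psi\|_{H^{t+\mez}_{ul}},\qquad 0\le t\le s-\mez.
\]
Restricting \eqref{G=} to $z=0$, writing $\partial_z\widetilde\Phi\arrowvert_{z=0}$ in terms of $T_A\psi$ and $w^{-1}g\arrowvert_{z=0}$ modulo commutators with $w$, and collecting terms, I would obtain $wR(\eta)\psi=(\text{bounded multiple of})\,g\arrowvert_{z=0}+T_m(w\psi)+(\text{smoothing})$ with $m$ a symbol of order $\le 0$, which gives the asserted estimate since $T_m:H^{t+\mez}_{ul}\to H^{t+\mez}_{ul}\hookrightarrow H^{t}_{ul}$.

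The hard part, and the only genuinely new point compared with \cite{ABZ2}, is the weighted symbolic calculus itself: one must show that conjugating a paradifferential operator of order $\le 1$ by $w$ changes it only by an operator of order one lower, \emph{in the weighted uniformly local spaces}; equivalently, that $wT_\kappa\psi-T_\kappa(w\psi)$ maps $H^{t+\mez}_{ul}$ into $H^{t}_{ul}$ with the weight carried through. This is where the structure of $\mathcal{W}$ (Definition~\ref{classW}) is used: $\nabla w=r'_1 w$ and $\nabla w^{-1}=r_1 w^{-1}$ with $r_1,r'_1\in C^\infty_b$, so $w$ varies slowly at unit scale and the leading commutator symbol $\partial_\xi\kappa\cdot(r'_1 w)$ is of order $0$ with $w$ absorbed; the off-diagonal remainders are then estimated exactly as in the proof of Proposition~\ref{pseudo}, summing the merely polynomial decay of the kernel of a paradifferential operator against the polynomial growth $w(x)w^{-1}(y)\le C\langle x-y\rangle^\varrho$ permitted for $w\in\mathcal{W}_{po}(\varrho)$ --- and this is precisely what forces the restriction to $\mathcal{W}_{po}$ here, unlike in Proposition~\ref{coroetape2}, which rested only on the elliptic estimates. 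Once this calculus is available, every paraproduct, composition and commutator estimate used for Theorem~3.11 in \cite{ABZ2} goes through with $w$ inserted, and the weighted paralinearization follows.
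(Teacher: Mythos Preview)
Your proposal is correct and lands on the same key point as the paper: the only genuinely new estimate is the commutator $wT_\kappa\psi-T_\kappa(w\psi)$, and this is exactly where the restriction $w\in\mathcal{W}_{po}$ enters (the paper makes the same remark after its proof).

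The organization differs slightly. You propose to set $\widetilde v=w\widetilde\Phi$, derive the perturbed elliptic equation for $\widetilde v$, and redo the full factorization/parabolic argument on it. The paper instead localizes first: it writes $\chi_k wG(\eta)\psi$, sets $\widetilde g_k=(\partial_z-T_A)(\chi_k w\widetilde\Phi)$, and then \emph{directly invokes} the proof of Theorem~3.11 in \cite{ABZ2} to conclude that the main piece equals $\chi_k T_\kappa(\widetilde\chi_k w\psi)+S$ with $S$ good. The extra term coming from $\nabla w$ hitting $\psi$ (your ``perturbation'') is isolated as a separate piece $B_2=\chi_k h_2^0\cdot r'_1 w\psi$ and estimated trivially by the product rule. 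What remains is precisely the commutator $\chi_k[T_\kappa,\widetilde\chi_k w]\psi$, which the paper splits as $R_1-R_2$ and bounds using the weighted symbolic calculus of Theorem~\ref{calc:symb} and \eqref{est:a-Ta}; only $R_2$ needs the weighted version, hence $\mathcal{W}_{po}$.

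The paper's route is more economical: by localizing with $\chi_k$ it reduces the parabolic/factorization step to a black-box citation of \cite{ABZ2}, so it never has to reprove a ``weighted parabolic regularity'' (which, incidentally, is not stated as such in Appendix~\ref{paramachinery}; you would have to assemble it from Proposition~\ref{L2smooth} and Theorem~\ref{calc:symb}). Your route would work but carries more overhead.
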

\begin{proof}
Let us fix a real number  $t\in [0, s-\mez]$.  By definition of the Dirichlet-Neumann operator, one has 
\begin{equation*}
G(\eta) \psi  = h_1 \partial_z\widetilde{\Phi}-h_2\cdot \nabla_x \widetilde{\Phi}  \big \arrowvert_{z=0}, \quad 
h_1  =  \frac{1 + \vert \nabla_x \rho\vert^2}{\partial_z \rho}, \quad h_2 = \nabla_x \rho. 
\end{equation*}
Let $A$  be the symbol of class $\Gamma^1_{\mez}(\xR^d\times J)$ given in Lemma 3.20, \cite{ABZ2}. We set
$$
\widetilde g_k= (\partial_z - T_A)(\chi_kw\widetilde\Phi),  \quad h_j\arrowvert_{z=0} = h_{j }^0, \quad j=1,2, \quad A\arrowvert_{z=0} = A_0.
$$ 
Then we can write
\begin{equation}
\begin{aligned}
\chi_k w G(\eta) \psi  &=  h_{1 }^0(\partial_z (\chi_kw\widetilde\Phi))\arrowvert_{z=0} - \chi_kwh_2^0\cdot \nabla_x \psi \\
&=h_1^0\widetilde{g}_k\arrowvert_{z=0}+h^0_1T_{A_0}(\chi_k\widetilde\chi_kw\psi) - \chi_k\widetilde\chi_kwh_2^0\cdot \nabla_x \psi  \\
&= h_1^0\widetilde{g}_k\arrowvert_{z=0} + h_1^0[T_{A_0}, \chi_k](\widetilde\chi_kw\psi) + \chi_k \big(h_1^0 T_{A_0} - h_2^0\cdot \nabla\big)(\widetilde\chi_kw\psi)\\
&\quad +\chi_kh_2^0\cdot \nabla(\widetilde\chi_kw\psi)-\chi_k\widetilde\chi_kwh_2^0\cdot \nabla_x \psi.
\end{aligned}
\end{equation}
Therefore, 
\[
\chi_k w G(\eta) \psi  =B_1+B_2,
\]
where 
\begin{gather*}
B_1= h_1^0\widetilde{g}_k\arrowvert_{z=0} + h_1^0[T_{A_0}, \chi_k](\widetilde\chi_kw\psi) + \chi_k \big(h_1^0 T_{A_0} - h_2^0\cdot \nabla\big)(\widetilde\chi_kw\psi), \\ B_2=\chi_kh_2^0\cdot (\nabla\widetilde\chi_k)w\psi+\chi_kh_2^0\cdot (\nabla w)\psi=\chi_kh_2^0\cdot (\nabla w)\psi.
\end{gather*}
The proof of Theorem 3.11, \cite{ABZ2} shows that
$$
 B_1=\chi_kT_{\kappa}(\widetilde\chi_kw\psi)+S
$$
with the remainder $S$ satisfies 
 $$\Vert S\Vert_{H^t} \leq  \mathcal{F}\bigl(\Vert  \eta  \Vert_{H^{s+ \mez}_{ul} })\Vert w\psi \Vert_{H^{t+\mez}_{ul} }.$$ 
Writing $\nabla w=wr_1$ with $r_1\in C^\infty_b(\xR^d)$. Since $h^0_2\in H^{s-\mez}(\xR^d)_{ul}$  with norm bounded by $\mathcal{F}(\Vert  \eta  \Vert_{H^{s+ \mez}_{ul} })$ and $t\le s-\mez$ the product rule yields
\begin{align*}
\|B_{2}\|_{H^t}&=\|\chi_kh_2^0\cdot r_1w\psi\|_{H^t}\\
&=\| \widetilde\chi_k h_2^0\cdot r_1\|_{H^{s-\mez}}\| \chi_kw\psi\|_{H^t}\\
&\le   \mathcal{F}\big(\Vert  \eta  \Vert_{H^{s+ \mez}_{ul} }\big)\Vert w\psi \Vert_{H^{t}_{ul}}.
\end{align*}
We have proved the following result
\[
\chi_k w G(\eta) \psi =\chi_kT_\kappa(\widetilde\chi_kw\psi)+\widetilde S,\quad \Vert \widetilde S\Vert_{H^t}\le \mathcal{F}\big(\Vert  \eta  \Vert_{H^{s+ \mez}_{ul} }\big)\Vert w\psi \Vert_{H^{t+\mez}_{ul}}.
\]
The proof of Theorem \ref{paralinDN} boils down to showing that the commutator $T:=\chi_k[T_\kappa,\widetilde\chi_kw]\psi$ satisfies
\bq\label{commutator:T}
 \Vert T\Vert_{H^t}\le \mathcal{F}\big(\Vert  \eta  \Vert_{H^{s+ \mez}_{ul} }\big)\Vert w\psi \Vert_{H^{t+\mez}_{ul}}.
\eq
 Indeed, introduce $\overline{\chi}\in C^\infty_c(\xR^d),~\overline{\chi}=1$ on the support of $\widetilde\chi$ and define 
also $\overline{\chi}_q(\cdot)=\overline\chi(\cdot-q)$ for all $q\in \xZ^d$. Denoting $w_k=\widetilde\chi_kw$ and noticing that $w_k=w_k\overline{\chi}_k$ we can write 
\begin{align*}
T&=\chi_kT_\kappa(w_k\overline\chi_k \psi)-\chi_kw_k\overline{\chi_k}T_\kappa \psi\\
&=\chi_kT_\kappa(w_k\overline\chi_k \psi)-\chi_kw_k[\overline\chi_k, T_\kappa]\psi-\chi_kw_kT_\kappa (\overline{\chi_k}\psi)\\
&=\chi_k[T_\kappa, w_k](\overline\chi_k\psi)-\chi_kw_k[\overline\chi_k, T_\kappa]\psi=:R_1-R_2.
\end{align*}
1. $R_1$ can be written as
\begin{align*}
R_1&=\chi_k[T_\kappa, T_{w_k}](\overline\chi_k\psi)+\chi_kT_\kappa[(w_k-T_{w_k})(\overline\chi_k\psi)]-\chi_k(w_k-T_{w_k})(T_\kappa(\overline\chi_k\psi))\\
&=:R_{1,a}+R_{1,b}-R_{1,c}.
\end{align*}
a) For $R_{1,a}$ we apply the symbolic calculus for paradifferential operators in Kato's spaces  (cf. Theorem $7.16$ $(ii)$, \cite{ABZ2}) to $\kappa\in \Gamma^1_1,~w_k\in \Gamma^0_1$ (see also Theorem \ref{calc:symb} $(ii)$ in the Appendix) to have
\[
\Vert R_{1,a}\Vert_{H^t}\le C \mathcal{F}\big(\Vert  \eta  \Vert_{H^{s+ \mez}_{ul} }\big)\Vert w_k\Vert_{W^{1,\infty}}\Vert \overline\chi_k\psi \Vert_{H^t_{ul}}.
\]
Now by properties $(i)$,~$(ii)$ in Definition \ref{classW}, the weight $w$ satisfies
\[
\Vert w_k\Vert_{W^{1,\infty}}\le Cw(k)\quad\text{and}\quad w(k)\Vert \overline\chi_k\psi \Vert_{H^t_{ul}}\le C\Vert w\psi \Vert_{H^t_{ul}}.
\]
Consequently, 
\[
\Vert R_{1,a}\Vert_{H^t}\le \mathcal{F}\big(\Vert  \eta  \Vert_{H^{s+ \mez}_{ul} }\big)\Vert w\psi \Vert_{H^t_{ul}}.
\]
b) For $R_{1,b}$ one first uses the boundedness of $T_\kappa$ from $H^{t+1}_{ul}$ to $H^t_{ul}$ (see Theorem $7.16$ $(i)$, \cite{ABZ2}) to have
\[
\Vert R_{1,b}\Vert_{H^t}\le \mathcal{F}\big(\Vert  \eta  \Vert_{H^{s+ \mez}_{ul} }\big)\Vert (w_k-T_{w_k})(\overline\chi_k\psi)\Vert_{H^{t+1}_{ul}}
\]
Next, the paraproduct rule in Proposition $7.18$, \cite{ABZ2} gives for $n$ large enough
\[
\Vert (w_k-T_{w_k})(\overline\chi_k\psi)\Vert_{H^{t+1}_{ul}}\le C\Vert w_k\Vert_{H^n_{ul}}\Vert \overline\chi_k\psi\Vert_{H^{t}_{ul}}
\]
As in a) we remark that $\Vert w_k\Vert_{H^n_{ul}}\le Cw(k)$ and hence obtain the desired estimate for $R_{1,b}$. The term $R_{1,c}$ can be handled using exactly the same method. In summary, we have proved an estimate better than needed:
\[
\Vert R_1\Vert_{H^t}\le \mathcal{F}\big(\Vert  \eta  \Vert_{H^{s+ \mez}_{ul} }\big)\Vert w\psi \Vert_{H^{t}_{ul}}.
\]
2. To study $R_2$ we decompose
\[
R_2=w_k[T_{\overline{\chi}_k},T_\kappa]\psi+w_k(\overline\chi_k-T_{\overline{\chi}_k})T_\kappa\psi-w_kT_\kappa[(\overline\chi_k-T_{\overline{\chi}_k})\psi]
\]
By the same arguments as in the the study of $R_1$ but using instead the symbolic calculus in Kato's spaces with weights in Theorem \ref{calc:symb} together with inequality \eqref{est:a-Ta} one ends up with 
\[
\Vert R_2\Vert_{H^t}\le \mathcal{F}\big(\Vert  \eta  \Vert_{H^{s+ \mez}_{ul} }\big)\Vert w\psi \Vert_{H^{t}_{ul}}.
\]
The proof of Proposition \ref{paralinDN} is complete.
\end{proof}
\begin{rema}
In the proof above, it is for the study of the remainder $R_2$ that we need to restrict the weight $w$ to the class of "polynomial weights" $\mathcal{W}_{po}(\varrho)$.
\end{rema}
\subsection{A weighted estimate for $\Phi$}
We use the elliptic regularity theorem \ref{regell} to prove a weighted estimate for $\Phi$--solution to \eqref{eqPhi}, which will be used later in proving a contraction estimate for Dirichlet-Neumann operator.
\begin{lemm}
Let   $s>1+{\frac{d}{2}}$ and $w\in \mathcal{W}_{po}(\varrho),~\varrho\ge 0$. With $\mu>0$ satisfying (\ref{delta1}) and $\Phi_q,~\psi_q$ as in section \ref{defiDN} there exists a non-decreasing function $\mathcal{F}$ independent of $q$ such that 
\[
\sum_{k\in \xZ^d}\| we^{\frac{\mu}{2} \langle x-q\rangle}\chi_k \nabla_{x,z}\widetilde{\Phi}_q\|_{L^{\infty}(J\times \xR^d)}\le \mathcal{F}(\|\eta\|_{H^{s+\frac{1}{2}}_{ul}}) \|w\psi_q\|_{H^s}.
\]
\label{l1estimate}
\end{lemm}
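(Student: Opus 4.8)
The plan is to turn the weighted $L^{2}$ estimate \eqref{phiqw} into a weighted $L^{\infty}$ estimate by means of the weighted elliptic regularity of \S\ref{elliptic}, and then to sum over the localizing cubes, the polynomial growth of $w$ being dominated by the \emph{surplus} exponential decay left over from \eqref{phiqw}. We may assume $\Vert w\psi_q\Vert_{H^{s}}<\infty$. Fix $\mu>0$ satisfying \eqref{delta1} and write $\omega_q(x):=e^{\mu\L{x-q}}$; then $\omega_q\in\mathcal{W}_{ex}(\mu)$, and the $C^{\infty}_b$ semi-norms of the associated functions $r_i,r_i'$ are independent of $q$ (they are translates by $q$ of those attached to $e^{\mu\L{x}}$). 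By construction (\S\ref{defiDN}, cf.\ \eqref{eqvar}) the function $\Phi_q=u_q+\underline{\psi}_q$ solves the elliptic problem \eqref{eqPhi} with datum $\psi_q$, so $\widetilde\Phi_q$ solves \eqref{elliptique} with $F=0$ and $\psi$ replaced by $\psi_q$. The ground step is the base estimate
\[
\Vert\omega_q\nabla_{x,z}\widetilde\Phi_q\Vert_{X^{-\mez}_{ul}(J)}\le\mathcal{F}\bigl(\Vert\eta\Vert_{H^{s+\mez}_{ul}}\bigr)\Vert\psi_q\Vert_{H^{\mez}},
\]
with $\mathcal{F}$ independent of $q$. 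Indeed, the $L^{2}_z(J,L^{2})$ component of the $X^{-\mez}_{ul}$ norm is nothing but \eqref{phiqw} (the uniformly local norm here being bounded by the full $\xR^{d}$ norm, since $\omega_q\ge 1$); the $C^{0}_z(J,H^{-\mez}_{ul})$ component is obtained exactly as in Step 2 of the proof of Proposition \ref{rec00}, with $w$ replaced by $\omega_q$: interpolate by Lemma \ref{interpo}, express $\partial_z^{2}\widetilde\Phi_q$ from \eqref{equ:modifie}, and estimate the terms using Lemma \ref{est-alpha} and the weighted product bound \eqref{Poincare:w}. (This also verifies the finiteness hypothesis \eqref{-mez-i}. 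The restriction $\varrho\le\varrho_*$ of Proposition \ref{rec00}$(ii)$ intervened there only in the summation over $q$ in \eqref{sum:Phi}, which is absent here, so \eqref{delta1} by itself is enough.)

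Since $s>1+\tfrac d2$ we have $-\mez\le s-1\le s-\mez$, so Theorem \ref{regell}, in the form \eqref{elliptic:2}, applies to $\widetilde u=\widetilde\Phi_q$ with weight $\omega_q$, $F=0$ and $\sigma=s-1$; feeding in the base estimate and using $\supp\psi_q\subset\{|x-q|\le1\}$ to bound $\Vert\omega_q\psi_q\Vert_{H^{s}_{ul}}\le C_\mu\Vert\psi_q\Vert_{H^{s}}$, we obtain, for every fixed $z_0\in(-1,0)$,
\[
\Vert\omega_q\nabla_{x,z}\widetilde\Phi_q\Vert_{X^{s-1}_{ul}(z_0,0)}\le\mathcal{F}\bigl(\Vert\eta\Vert_{H^{s+\mez}_{ul}}\bigr)\Vert\psi_q\Vert_{H^{s}},\qquad\mathcal{F}\ \text{independent of }q.
\]
Because $s-1>\tfrac d2$, one has the embedding $X^{s-1}(z_0,0)\hookrightarrow C^{0}_z([z_0,0],L^{\infty}(\xR^{d}))\hookrightarrow L^{\infty}$, hence $\Vert\chi_k\omega_q\nabla_{x,z}\widetilde\Phi_q\Vert_{L^{\infty}}\les\Vert\omega_q\nabla_{x,z}\widetilde\Phi_q\Vert_{X^{s-1}_{ul}(z_0,0)}$ for every $k\in\xZ^{d}$. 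On $\supp\chi_k$, using $w\in\mathcal{W}_{po}(\varrho)$ (so $w(x)\le Cw(q)\L{x-q}^{\varrho}$) and $|x-k|\le1$,
\[
w(x)e^{\frac\mu2\L{x-q}}=\bigl(w(x)e^{-\frac\mu2\L{x-q}}\bigr)\,\omega_q(x)\les w(q)\,\L{k-q}^{\varrho}\,e^{-\frac\mu2\L{k-q}}\,\omega_q(x),
\]
so that $\Vert we^{\frac\mu2\L{\cdot-q}}\chi_k\nabla_{x,z}\widetilde\Phi_q\Vert_{L^{\infty}}\les w(q)\,\L{k-q}^{\varrho}\,e^{-\frac\mu2\L{k-q}}\,\mathcal{F}\bigl(\Vert\eta\Vert_{H^{s+\mez}_{ul}}\bigr)\Vert\psi_q\Vert_{H^{s}}$. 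Summing over $k\in\xZ^{d}$, the series $\sum_{k}\L{k-q}^{\varrho}e^{-\frac\mu2\L{k-q}}$ equals a constant depending only on $(\mu,\varrho,d)$, and finally $w(q)\Vert\psi_q\Vert_{H^{s}}\les\Vert w\psi_q\Vert_{H^{s}}$ because $w$ is comparable to $w(q)$ on $\supp\psi_q\subset\{|x-q|\le1\}$, with derivatives controlled by $r_1'\in C^{\infty}_b$. This is the claimed bound, with $\mathcal{F}$ independent of $q$ (and depending on $w$ only through the semi-norms of $r_i,r_i'$).

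The only genuinely technical step is the base estimate, i.e.\ re-running the $C^{0}_zH^{-\mez}$ half of the proof of Proposition \ref{rec00} for the exponentially weighted $\omega_q\widetilde\Phi_q$ with constants uniform in $q$ — uniformity being automatic from the translation invariance of the logarithmic derivatives of $\omega_q$. The only new bit of bookkeeping is the split of the exponent: \eqref{phiqw} supplies decay $e^{-\mu\L{\cdot-q}}$, one half of which is the weight $e^{\frac\mu2\L{\cdot-q}}$ appearing in the statement, while the other half is precisely what makes the $\ell^{1}$ sum over $k$ converge and absorbs the polynomial growth of $w$.
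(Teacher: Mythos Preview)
Your proof is correct and follows essentially the same approach as the paper's: apply the weighted elliptic regularity Theorem~\ref{regell} with $\sigma=s-1$ to upgrade the exponential $L^2$ bound \eqref{phiqw} to an $L^\infty$ bound, then sum over $k$ using the leftover exponential decay. The only cosmetic difference is that the paper carries the composite weight $we^{3\mu/4\langle x-q\rangle}$ through the elliptic step (reserving $e^{-\mu/4\langle k-q\rangle}$ for the lattice sum), whereas you run the elliptic regularity with the pure exponential $e^{\mu\langle x-q\rangle}$ and incorporate $w$ only at the end via $w(q)\Vert\psi_q\Vert_{H^s}\lesssim\Vert w\psi_q\Vert_{H^s}$.
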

\begin{proof} We remark that $w\psi_q\in H^s(\xR^d)$ for every $q\in \xZ^d$ provided that $\psi\in H^s_{ul}(\xR^d)$. It is clear that
\[
\|we^{\frac{\mu}{2} \langle x-q\rangle}\chi_k  \nabla_{x,z}\widetilde{\Phi}_q\|_{L^{\infty}(J\times \xR^d)}\le  e^{-\frac{\mu}{4}\langle k-q\rangle} \|we^{3\mu/4 \langle x-q\rangle} \nabla_{x,z} \widetilde{\Phi}_q\|_{L^{\infty}(J\times \xR^d)}.     
  \]
Consider the weight $we^{3\mu/4\langle x-q\rangle}\in \mathcal{W}$ which has semi-norms independent of $q$. Applying Theorem \ref{regell} to $\Phi_q$ (with $\sigma=s-1$) and taking into account Remark \ref{remark: -mez} , we may estimate
\[
\begin{aligned}
&\sum_{k\in \xZ^d}\|we^{\frac{\mu}{2}\langle x-q\rangle}\chi_k  \nabla_{x,z}\widetilde{\Phi}_q\|_{L^{\infty}(J\times \xR^d)}\\
 &\le  C\sum_{k\in \xZ^d} e^{-\frac{\mu}{4}\langle k-q\rangle} \|we^{3\mu/4 \langle x-q\rangle} \nabla_{x,z}\widetilde{\Phi}_q\|_{L^{\infty}(J\times \xR^d)}\\
&\le C \|we^{3\mu/4\langle x-q\rangle}  \nabla_{x,z}\widetilde{\Phi}_q\|_{L^{\infty}(J, H^{s-1}(\xR^d))_{ul}}\\
&\le  \mathcal{F}(\|\eta\|_{H^{s+\frac{1}{2}}_{ul}})\left\{ \|we^{3\mu/4\langle x-q\rangle}\psi_q\|_{H^s_{ul}}+\|we^{3\mu/4\langle x-q\rangle}\nabla_{x,z}\widetilde{\Phi}_q\|_{X^{-\frac{1}{2}}_{ul}(J)}\right\}\\
&\le \mathcal{F}(\|\eta\|_{H^{s+\frac{1}{2}}_{ul}})\left\{ \|w\psi_q\|_{H^s}+\|we^{3\mu/4\langle x-q\rangle}\nabla_{x,z}\widetilde{\Phi}_q\|_{X^{-\frac{1}{2}}_{ul}(J)}\right\}.
\end{aligned}
\]
Remark that in the first inequality, we have used the trivial fact that $ \sum_{k\in \xZ^d} e^{-\frac{\mu}{4}\langle k-q\rangle}$ is finite and independent of $q$.\\
To complete the proof we need to prove that 
\bq
\|we^{3\mu/4 \langle x-q\rangle}\nabla_{x,z}\widetilde{\Phi}_q\|_{X^{-\frac{1}{2}}_{ul}(J)}\le \mathcal{F}(\|\eta\|_{H^{s+\frac{1}{2}}_{ul}}) \|w\psi_q\|_{H^{\frac{1}{2}}}.
\eq
However, using interpolation inequality as in step 2 of the proof of Proposition \ref{rec00}, it suffices to show that
\bq
\|we^{3\mu/4 \langle x-q\rangle}\nabla_{x,z}\widetilde{\Phi}_q\|_{L^2(J, L^2)_{ul}}\le \mathcal{F}(\|\eta\|_{H^{s+\frac{1}{2}}_{ul}})\|w\psi_q\|_{H^{\frac{1}{2}}}.
\label{phiqw1}
\eq
Indeed, by virtue of \eqref{phiqw} one can estimate
\begin{align*}
\|\chi_p we^{\frac{3\mu}{4}\langle x-q\rangle}\nabla_{x,z}\widetilde\Phi_q\|_{L^2(J, L^2)}&\lesssim  e^{-\frac{\mu}{4}\langle p-q\rangle}w(p)\|\chi_p e^{\mu\langle x-q\rangle}\nabla_{x,z}\widetilde\Phi_q\|_{L^2(J, L^2)}\\
&\lesssim  e^{-\frac{\mu}{4}\langle p-q\rangle}w(p)\mathcal{F}(\|\eta\|_{W^{1,\infty}})\|\chi_q \psi\|_{H^{\frac{1}{2}}}\\
&\lesssim  e^{-\frac{\mu}{4}\langle p-q\rangle}w(p)w(q)^{-1}\mathcal{F}(\|\eta\|_{W^{1,\infty}})\|w\chi_q \psi\|_{H^{\frac{1}{2}}}\\
&\lesssim  e^{-\frac{\mu}{4}\langle p-q\rangle}\langle p-q\rangle^{\varrho}(\|\eta\|_{W^{1,\infty}})\|w\chi_q \psi\|_{H^{\frac{1}{2}}}\\
&\lesssim \mathcal{F}(\|\eta\|_{W^{1,\infty}})\|w\chi_q \psi\|_{H^{\frac{1}{2}}},
\end{align*}
which is the desired bound.
\end{proof}
\begin{rema}
As in Proposition \ref{rec00}, Lemma \ref{paralinDN} can be formulated for weights in the class $\mathcal{W}_{ex}(\varrho)$ with $\varrho$ sufficiently small. 
\end{rema}
\section{Weighted contraction for the Dirichlet-Neumann operator}\label{contrac:DN}\label{contractDN}
The main ingredient in proving the contraction for the Dirichlet-Neumann operator is  the contraction estimate for solutions to the elliptic problem \eqref{eqPhi}. The key idea then is to compare the two variational solutions $\Phi_j$ after changing the variables $\Phi_j$ to $\widetilde\Phi_j$ as in \eqref{change}. However, after straightening the fluid domains by the diffeomorphism \eqref{diffeo}, the new domains will depend on their upper surface. To overcome this, we use a slightly different diffeomorphism as follows.\\
Given $\eta_*\in C^0_b(\xR^d)$ and $h>0$, there exists $\widetilde \eta\in C_b^\infty(\xR^d)$ such that
\bq\label{tildeeta}
\eta_*(x)<\widetilde\eta(x)<\eta_*(x)+\frac{h}{3},\quad\forall x\in \xR^d.
\eq
Then, because $\eta_j>\eta_*+h$ we set
\[
\begin{cases}
\Omega_{1, j}&=\{(x, y): x\in \xR^d,~ \eta_j(x)-\frac{h}{3}<y<\eta_j(x) \},\\
\Omega_{2, j}&=\{(x, y): x\in \xR^d,~ \widetilde\eta(x)\le y\le\eta_j(x)-\frac{h}{3} \},\\
\Omega_{3, j}&=\{(x, y): x\in \xR^d,~ \eta_*(x)<y<\widetilde\eta(x)\},\\
\Omega_j&=\Omega_{1,j}\cup\Omega_{2,j}\cup\Omega_{3,j},
\end{cases}
\]
and 
\[
\begin{cases}
\widetilde\Omega_{1}&=\xR^d_x\times (-1, 0)_z,\\
\widetilde\Omega_{2}&=\xR^d_x\times [-2, -1]_z,\\
\widetilde\Omega_{3}&=\{ (x, z)\in\xR^d\times (-\infty -2): z+2+\widetilde \eta(x)>\eta_*(x)\},\\
\widetilde\Omega&=\widetilde\Omega_{1}\cup\widetilde\Omega_{2}\cup\widetilde\Omega_{3}.
\end{cases}
\]
Remark that $\widetilde \Omega$ depends on $\eta_*,~h$ but not on $\eta_j$. Thus, we can define
\bq\label{diffeoj}
\rho_j(x, z)=
\begin{cases}
&\rho_{1,j}(x,z)=(1+z)e^{\delta z\langle D_x \rangle }\eta_j(x) -z\left [e^{-(1+ z)\delta\langle D_x \rangle }\eta_j(x) -\frac{h}{3}\right],\quad \text{in} ~ \widetilde{\Omega}_{1},\\
&\rho_{2,j}(x,z)= (2+z)\left[e^{\delta (z+1)\langle D_x \rangle }\eta_j(x) -\frac{h}{3}\right]-(1+z)\widetilde\eta, \quad \text{in }~ \widetilde{\Omega}_{2},\\
& \rho_{3,j}(x,z)=z+2+\widetilde\eta(x), \quad \text{in}~ \widetilde{\Omega}_{3}.
\end{cases}
\eq
\begin{lemm}\label{bound:dz}
Assume that $\eta_j\in W^{1,\infty}(\xR^d)$, $j=1, 2$. There exists an absolute constant $C>0$ such that if 
\[
C\delta \lA \eta_j\rA_{W^{1,\infty}(\xR^d)}\le h,~j=1,2
\]
then the mappings $(x, z)\mapsto (x, \rho_j(x, z))$ are Lipschitz diffeomorphisms from $\widetilde\Omega$ to  $\Omega_j$ and there exists a constant $c_0>0$ such that $\partial_z\rho_j\ge c_0$ a.e. in $\Omega$.
\end{lemm}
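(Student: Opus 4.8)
The plan is to analyse separately the three strips $\widetilde\Omega_1,\widetilde\Omega_2,\widetilde\Omega_3$ on which $\rho_j$ is defined in \eqref{diffeoj}, and then to glue the pieces. The only analytic input needed is the collection of mapping properties of the smoothing operators $e^{\delta z\langle D_x\rangle}$ (for $z\le 0$) on $W^{1,\infty}(\xR^d)$ that already underlie the proof of Lemma~3.6 of \cite{ABZ1}; namely, for $\delta$ small enough and uniformly in $z$ with $-2\le z\le 0$, $\delta z\le 0$,
\[
\Vert e^{\delta z\langle D_x\rangle}f\Vert_{W^{1,\infty}}\le C\Vert f\Vert_{W^{1,\infty}},\qquad \Vert e^{\delta z\langle D_x\rangle}f-f\Vert_{L^\infty}+\Vert\delta\langle D_x\rangle e^{\delta z\langle D_x\rangle}f\Vert_{L^\infty}\le C\delta\Vert f\Vert_{W^{1,\infty}}.
\]
The point will be that on each strip $\partial_z\rho_j$ is a strictly positive constant comparable to $h$, plus an error of size $\le C\delta\Vert\eta_j\Vert_{W^{1,\infty}}$, so that the hypothesis $C\delta\Vert\eta_j\Vert_{W^{1,\infty}}\le h$ makes the error negligible and forces $\partial_z\rho_j\ge c_0>0$.

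On $\widetilde\Omega_1=\xR^d\times(-1,0)$ the formula for $\rho_{1,j}$ in \eqref{diffeoj} is exactly the change of variables \eqref{diffeo} of \cite{ABZ1} with $(\eta,h)$ replaced by $(\eta_j,h/3)$; hence Lemma~3.6 of \cite{ABZ1} applies directly and, after enlarging the absolute constant, yields that $(x,z)\mapsto(x,\rho_{1,j}(x,z))$ is a Lipschitz diffeomorphism from $\widetilde\Omega_1$ onto $\Omega_{1,j}$ with $\partial_z\rho_{1,j}\ge c_0^{(1)}>0$ (indeed $\partial_z\rho_{1,j}=\frac{h}{3}+[e^{\delta z\langle D_x\rangle}\eta_j-e^{-(1+z)\delta\langle D_x\rangle}\eta_j]+\delta\langle D_x\rangle[(1+z)e^{\delta z\langle D_x\rangle}+ze^{-(1+z)\delta\langle D_x\rangle}]\eta_j$, and the last two brackets are $O(\delta\Vert\eta_j\Vert_{W^{1,\infty}})$). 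On $\widetilde\Omega_2=\xR^d\times[-2,-1]$, differentiating \eqref{diffeoj} gives $\partial_z\rho_{2,j}=\big(e^{\delta(z+1)\langle D_x\rangle}\eta_j-\frac{h}{3}-\widetilde\eta\big)+(2+z)\delta\langle D_x\rangle e^{\delta(z+1)\langle D_x\rangle}\eta_j$; since $\eta_j>\eta_*+h$ and $\eta_*<\widetilde\eta<\eta_*+\frac{h}{3}$ by \eqref{tildeeta}, the first term equals $\eta_j-\frac{h}{3}-\widetilde\eta>\frac{h}{3}$ up to an $O(\delta\Vert\eta_j\Vert_{W^{1,\infty}})$ error, and the second term is $O(\delta\Vert\eta_j\Vert_{W^{1,\infty}})$ as well; thus under the smallness hypothesis $\partial_z\rho_{2,j}\ge c_0^{(2)}>0$ a.e., $\rho_{2,j}$ is Lipschitz in $(x,z)$, and being strictly increasing in $z$ with $\rho_{2,j}(\cdot,-2)=\widetilde\eta$ and $\rho_{2,j}(\cdot,-1)=\eta_j-\frac{h}{3}$ it is a Lipschitz diffeomorphism from $\widetilde\Omega_2$ onto $\Omega_{2,j}$. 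Finally $\rho_{3,j}(x,z)=z+2+\widetilde\eta(x)$ on $\widetilde\Omega_3$ is independent of $j$, affine in $z$, Lipschitz in $x$, has $\partial_z\rho_{3,j}\equiv1$, and maps $\widetilde\Omega_3$ onto $\Omega_{3,j}$.

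It remains to glue the three maps. Setting $z=-1$ and $z=-2$ in \eqref{diffeoj} one checks $\rho_{1,j}(\cdot,-1)=\rho_{2,j}(\cdot,-1)=\eta_j-\frac{h}{3}$ and $\rho_{2,j}(\cdot,-2)=\rho_{3,j}(\cdot,-2)=\widetilde\eta$, so $\rho_j$ is continuous across both interfaces; since it is Lipschitz on each closed strip and the pieces match on the common boundaries, $\rho_j$ is globally Lipschitz on $\widetilde\Omega$ and $\partial_z\rho_j\ge c_0:=\min\{c_0^{(1)},c_0^{(2)},1\}>0$ a.e. Then $\Theta_j(x,z):=(x,\rho_j(x,z))$ is injective (the $z$-ranges $(\eta_j-\frac{h}{3},\eta_j)$, $(\widetilde\eta,\eta_j-\frac{h}{3})$, $(\eta_*,\widetilde\eta)$ abut, so $z\mapsto\rho_j(x,z)$ is strictly increasing on $\widetilde\Omega$), surjective onto $\Omega_j$ by the intermediate value theorem on each strip, Lipschitz, and with Lipschitz inverse because of the uniform lower bound $\partial_z\rho_j\ge c_0$ together with the $x$-Lipschitz bound on $\rho_j$; hence $\Theta_j$ is a Lipschitz diffeomorphism from $\widetilde\Omega$ onto $\Omega_j$. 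I expect no deep obstacle here: the main point is the computation on the new middle strip $\widetilde\Omega_2$ (where the formula departs from that of \cite{ABZ1} and the hypothesis $C\delta\Vert\eta_j\Vert_{W^{1,\infty}}\le h$ is used in an essential way), together with the elementary but indispensable verification that the $C^0$-matching at $z=-1,-2$ and the uniform positivity $\partial_z\rho_j\ge c_0$ turn the three pieces into a single bi-Lipschitz map.
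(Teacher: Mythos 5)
Your proof is correct and follows essentially the same approach as the paper: a strip-by-strip analysis treating $\widetilde\Omega_1$ by citing Lemma~3.6 of \cite{ABZ1}, computing $\partial_z\rho_{2,j}$ explicitly on the middle strip and bounding the smoothing-operator error by $C\delta\lA\eta_j\rA_{W^{1,\infty}}$ against the $h/3$ gap coming from \eqref{condition} and \eqref{tildeeta}, and trivially on $\widetilde\Omega_3$. The only difference is that you spell out the $C^0$-matching and bi-Lipschitz gluing across $z=-1,-2$, which the paper leaves implicit.
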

\begin{proof} Observe first that $\rho_{k, j}$ are Lipschitz for $k=1, 2, 3;~j=1, 2$. Clearly,  $(x, z)\mapsto (x, \rho_{3,j}(x, z))$ are  diffeomorphisms from $\widetilde\Omega_3$ to  $\Omega_{3,j}$ and $\partial_z\rho_{3,j}=1\ge c_0>0$. The same properties hold for $\rho_{1,j}$ as in \eqref{diffeo}. We now prove it for $\rho_{2,j}$. Notice first that \[
\rho_{2,j}(-1, x)=\eta_j-\frac{h}{3},\quad \rho_{2,j}(-2, x)=\widetilde \eta.
\]
Compute now
\begin{align*}
\partial_z \rho_{2,j}&=e^{\delta (z+1)\langle D_x \rangle }\eta_j(x) -\frac{h}{3}-(2+z)\delta e^{\delta (z+1)\langle D_x \rangle}\langle D_x \rangle\eta_j-\widetilde\eta \\
&=e^{\delta (z+1)\langle D_x \rangle }\eta_j(x) -\eta_j(x)-(2+z)\delta e^{\delta (z+1)\langle D_x \rangle}\langle D_x \rangle\eta_j+\eta_j(x)-\widetilde\eta -\frac{h}{3}.
\end{align*}
By writing $e^{\delta (z+1)\langle D_x \rangle }\eta_j-\eta_j=\delta (z+1)\int_0^1e^{r\delta (z+1)\langle D_x \rangle }\langle D_x \rangle \eta_jdr $ we deduce that
\begin{align*}
&\lA e^{\delta (z+1)\langle D_x \rangle }\eta_j -\eta_j\rA _{L^\infty(\xR^d)} +\lA (2+z)\delta e^{\delta (z+1)\langle D_x \rangle}\langle D_x \rangle\eta_j\rA_{L^\infty(\xR^d)}\\
&\quad \le C\delta \lA \eta_j\rA_{W^{1,\infty}(\xR^d)}\le \frac{h}{6}
\end{align*}
for $\delta>0$ small enough. On the other hand, thanks to \eqref{condition} and \eqref{tildeeta} it holds that
\[
\eta_j-\widetilde\eta-\frac{h}{3}=(\eta_j-\eta_*)+(\eta_*-\widetilde \eta)-\frac{h}{3}>h-\frac{h}{3}-\frac{h}{3}=\frac{h}{3}
\]
and thus $\partial_z\rho_{2,j}\ge \frac{h}{3}-\frac{h}{6}=\frac{h}{6}$ in $\widetilde \Omega_2$. Therefore, we can conclude that  $(x, z)\mapsto (x, \rho_{2,j}(x, z))$ are diffeomorphisms from $\widetilde\Omega_2$ to  $\Omega_{2,j}$. 
\end{proof}
With the functions $\rho_j$ above we  denote for every $f:\Omega\to \xR$ 
\bq\label{imagej}
\widetilde f_j(x,z)=f(x, \rho_j(x, z))
\eq
and as in \eqref{lambda} we define the differential operators $\Lambda^j=(\Lambda^j_1, \Lambda^j_2)$. Hereafter, we denote  $J=(-2, 0)$ and assume that 
\bq\label{condition:eta}
\eta_j\in H^{s+\mez}_{ul}(\xR^d),~s>1+\frac{d}{2},~j=1, 2.
\eq
\begin{lemm}\label{lem:wp} Let $w\in \mathcal{W}_{po}(\varrho),~\varrho\ge 0$. We have  $\Lambda^1-\Lambda^2=\wp\partial_z=(\wp_1, \wp_2)\partial_z$ with $\wp=0$ for $z<-2$ and
\bq\label{est:wp}
\|w\wp\|_{L^2(J, L^2(\xR^d))_{ul}}\le \mathcal{F}(\|(\eta_1, \eta_2)\|_{H^{s+\mez}_{ul}\times H^{s+\mez}_{ul}})\|w(\eta_1-\eta_2)\|_{H^{\mez}_{ul}}.
\eq
\end{lemm}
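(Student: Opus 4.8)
The plan is to compute $\Lambda^1-\Lambda^2$ directly from \eqref{lambda}, read off $\wp$, and then estimate each piece of $\wp$ layer by layer using the explicit formulas \eqref{diffeoj}. By \eqref{lambda} one has $\Lambda^j_1=\frac1{\partial_z\rho_j}\partial_z$ and $\Lambda^j_2=\nabla_x-\frac{\nabla_x\rho_j}{\partial_z\rho_j}\partial_z$, so that $\Lambda^1-\Lambda^2=\wp\,\partial_z$ with $\wp=(\wp_1,\wp_2)$,
\[
\wp_1=\frac1{\partial_z\rho_1}-\frac1{\partial_z\rho_2}=-\frac{\partial_z(\rho_1-\rho_2)}{\partial_z\rho_1\,\partial_z\rho_2},\qquad
\wp_2=\frac{\nabla_x\rho_2}{\partial_z\rho_2}-\frac{\nabla_x\rho_1}{\partial_z\rho_1}=\frac{\nabla_x\rho_2\,\partial_z(\rho_1-\rho_2)-\partial_z\rho_2\,\nabla_x(\rho_1-\rho_2)}{\partial_z\rho_1\,\partial_z\rho_2}.
\]
In the bottom layer $\widetilde\Omega_3$ (where $z<-2$) we have $\rho_{3,1}=\rho_{3,2}=z+2+\widetilde\eta$, hence $\rho_1-\rho_2\equiv0$ and $\wp\equiv0$ there, which is the support statement.

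Next I would reduce \eqref{est:wp} to a bound on $\rho_1-\rho_2$ alone. By Lemma \ref{bound:dz} the factors $\partial_z\rho_j$ are bounded below by $c_0>0$ on $J\times\xR^d$; moreover, since $s>1+\frac d2$, the formulas \eqref{diffeoj} show — exactly as in \cite{ABZ1} and Lemma \ref{est-alpha}, using that $e^{\pm\delta(\cdot)\langle D_x\rangle }$ and $\langle D_x\rangle e^{\pm\delta(\cdot)\langle D_x\rangle }$ are bounded from $H^{s+\mez}_{ul}$ to $L^\infty_z H^{s-\mez}_{ul}\hookrightarrow L^\infty$ on the relevant $z$-intervals — that $\partial_z\rho_j$ and $\nabla_x\rho_j$ lie in $L^\infty(J\times\xR^d)$ with norm $\le\cF(\|\eta_j\|_{H^{s+\mez}_{ul}})$. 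Hence the coefficients $1/(\partial_z\rho_1\partial_z\rho_2)$, $\nabla_x\rho_2/(\partial_z\rho_1\partial_z\rho_2)$, $1/\partial_z\rho_1$ occurring in $\wp$ are $L^\infty$ functions with that bound, and since multiplication by such a function commutes with $w$ we get
\[
\|w\wp\|_{L^2(J,L^2)_{ul}}\le\cF\big(\|(\eta_1,\eta_2)\|_{H^{s+\mez}_{ul}\times H^{s+\mez}_{ul}}\big)\big(\|w\,\partial_z(\rho_1-\rho_2)\|_{L^2(J,L^2)_{ul}}+\|w\,\nabla_x(\rho_1-\rho_2)\|_{L^2(J,L^2)_{ul}}\big),
\]
so it suffices to bound each term on the right by $\cF(\cdots)\|w(\eta_1-\eta_2)\|_{H^\mez_{ul}}$.

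For this I would use that on each layer the maps \eqref{diffeoj} are affine in $\eta_j$ with $j$-independent inhomogeneous part: in $\widetilde\Omega_1$,
\[
\rho_1-\rho_2=(1+z)e^{\delta z\langle D_x\rangle }(\eta_1-\eta_2)-z\,e^{-(1+z)\delta\langle D_x\rangle }(\eta_1-\eta_2),
\]
and in $\widetilde\Omega_2$, $\rho_1-\rho_2=(2+z)e^{\delta(z+1)\langle D_x\rangle }(\eta_1-\eta_2)$. Applying $\partial_z$ or $\nabla_x$ produces, apart from harmless bounded $z$-polynomial factors, a sum of terms $Q(z,D_x)(\eta_1-\eta_2)$, where $Q(z,\cdot)$ is a Fourier multiplier with symbol $q(z,\xi)$ of the type $\langle\xi\rangle e^{\mu(z)\langle\xi\rangle}$ or $\xi\,e^{\mu(z)\langle\xi\rangle}$, with $\mu=\mu(z)$ affine in $z$ and $\le0$ on the relevant interval. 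Such $q$ satisfies $|\partial_\xi^\alpha q(z,\xi)|\le C_\alpha\langle\xi\rangle^{1-|\alpha|}$ uniformly in $z$ and, crucially, $\int_J|q(z,\xi)|^2\,dz\le C\langle\xi\rangle$ — this is the half derivative gained from integrating in $z$ (the elementary identity $\int\langle\xi\rangle^2e^{2\mu(z)\langle\xi\rangle}\,dz\lesssim\langle\xi\rangle$, equivalently parabolic smoothing for the backward-heat-type family $\partial_z\mapsto\delta\langle D_x\rangle$). Thus \eqref{est:wp} follows once we establish the weighted, uniformly local version
\[
\|w\,Q(z,D_x)g\|_{L^2(J,L^2)_{ul}}\le C\,\|wg\|_{H^\mez_{ul}},\qquad g=\eta_1-\eta_2 .
\]

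The main obstacle is precisely this last estimate: one must propagate the weight $w$ through the smoothing operators $e^{\pm\delta(\cdot)\langle D_x\rangle }$ (and $\langle D_x\rangle$ times them) while simultaneously using the $z$-integration to recover the missing half derivative — the two operations must be intertwined so that the endpoint $z=0$ generates no non-integrable $|z|^{-1}$ singularity. I would carry this out by localizing with the partition $\{\chi_k\}$ and conjugating the multiplier $Q(z,D_x)$ by the polynomial weight $w\in\mathcal W_{po}(\varrho)$: the off-diagonal decay of its kernel (polynomial decay is enough) absorbs the weight ratio $w(k)w^{-1}(q)\les\langle k-q\rangle^\varrho$ exactly as in \eqref{sum:Phi} and \eqref{intro:sumq}, and then one integrates in $z$. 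Alternatively this estimate is contained in the $X^\sigma_{ul}$/$Y^\sigma$ machinery of \cite{ABZ1},\cite{ABZ2} combined with the weighted pseudodifferential calculus of Appendix \ref{paramachinery} (Proposition \ref{pseudo}); it is at this point that the restriction $w\in\mathcal W_{po}$ is used.
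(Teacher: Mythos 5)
Your proposal is correct and follows essentially the same route as the paper: compute $\wp$, note it vanishes in $\widetilde\Omega_3$, split the norm over the two remaining layers, use that $\rho_1-\rho_2$ is an affine/exponential image of $\eta_1-\eta_2$ with $j$-independent inhomogeneous part (so $\widetilde\eta$ drops out of the difference), and finish with the weighted $\tfrac12$-smoothing effect of the Poisson kernel, which is exactly where the restriction $w\in\mathcal W_{po}$ enters. The only thing you leave slightly imprecise is the reference for that final estimate: rather than Proposition~\ref{pseudo} plus an ad hoc kernel argument intertwined with the $z$-integration, the paper invokes Proposition~\ref{L2smooth} with $r=1$ (and $m=1$, $s=-\tfrac12$ for the terms carrying $\langle D_x\rangle$ or $D_x$; $m=0$ for the zeroth-order pieces), which is exactly the intertwined weighted-in-$x$, squared-integrated-in-$z$ bound you say you need and already carries the weight through the multiplier. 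Citing that proposition would close the proof cleanly; as written, your "main obstacle" paragraph re-derives informally what the appendix already provides.
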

\begin{proof}
By definition, one gets
\begin{align*}
\wp_1&=\frac{\partial_z(\rho_2-\rho_1)}{\partial_z\rho_1\partial_z\rho_2},\\
\wp_2&=-\frac{\nabla_x(\rho_2-\rho_1)}{\partial_z\rho_1}-\nabla_x\rho_2\frac{\partial_z(\rho_2-\rho_1)}{\partial_z\rho_1\partial_z\rho_2}
\end{align*}
so in $\widetilde \Omega_3$,  $\wp=0$. To obtain \eqref{est:wp} one writes
\[
\|w\wp\|_{L^2(J, L^2(\xR^d))_{ul}}\le \|w\wp\|_{L^2((-1,0), L^2(\xR^d))_{ul}}+\|w\wp\|_{L^2((-2,-1), L^2(\xR^d))_{ul}}
\]
to use definition \eqref{diffeoj}, the fact that $\widetilde \eta\in C^\infty_b(\xR^d)$ and  the $\mez$-smoothing effect of the Poisson kernel, which is Proposition \ref{L2smooth} applied with $r=1$.
\end{proof}
\begin{theo}\label{diffPhi:low}
Let $\psi_j\in H_{ul}^\mez(\xR^d)$ and $\Phi_j, j=1,2$ be the unique solution in $H^1_{ul}(\Omega_j)$ of the problem
\bq\label{eq:Phij}
 \Delta_{x,y}\Phi_j = 0 \text{ in } \Omega, \quad  \Phi_j\arrowvert_{\Sigma} = \psi_j, \quad \frac{\partial \Phi_j}{\partial \nu}\arrowvert_ \Gamma = 0.
 \eq
Set  $\eta=\eta_1-\eta_2,~\psi=\psi_1-\psi_2,~\widetilde{\Phi}=\widetilde{\Phi}_1-\widetilde{\Phi}_2$  where $\widetilde\Phi_j$ is the image of $\Phi_j$ as in \eqref{imagej}. Then for every $w\in \mathcal{W}_{po}(\varrho),~\varrho\ge 0$ there exists a nonnegative function $\cF$ such that 
\bq
\|w\nabla_{x,z}\widetilde{\Phi}\|_{X^{-\frac{1}{2}}_{ul}(J)}\le \mathcal{F}(\|(\eta_1, \eta_2)\|_{H^{s+\frac{1}{2}}_{ul}\times H^{s+\frac{1}{2}}_{ul}})\left(\|w\eta\|_{H^{s-\mez}_{ul}}\|\psi_{2}\|_{H^s_{ul}}+\|w\psi\|_{H^{\mez}_{ul}}\right).
\label{dphi1/2}
\eq
\end{theo}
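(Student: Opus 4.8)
The plan is to establish the weighted analogue of the base contraction estimate for the Dirichlet--Neumann operator of \cite{ABZ2}, by straightening both fluid domains to the common reference cylinder $\widetilde\Omega$ of \S\ref{contractDN} and running a weighted energy estimate as in the proof of Proposition~\ref{rec00}. For $j=1,2$ the map $(x,z)\mapsto(x,\rho_j(x,z))$ is a Lipschitz diffeomorphism of $\widetilde\Omega$ onto $\Omega_j$ with $\partial_z\rho_j\ge c_0>0$ (Lemma~\ref{bound:dz}), and $\Delta_{x,y}\Phi_j=0$ in $\Omega_j$ is equivalent to $\cn_{x,z}(A_j\nabla_{x,z}\widetilde\Phi_j)=0$ in $\widetilde\Omega$, where $A_j$ is a symmetric positive-definite matrix whose entries are rational expressions in $\nabla_x\rho_j,\partial_z\rho_j$ (cf.\ \eqref{alpha}), with $\widetilde\Phi_j\arrowvert_{z=0}=\psi_j$ and the Neumann condition at the bottom. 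Subtracting and keeping $A_1$ on the left gives, with $\mathbf G:=(A_2-A_1)\nabla_{x,z}\widetilde\Phi_2$,
\[
\cn_{x,z}\big(A_1\nabla_{x,z}\widetilde\Phi\big)=\cn_{x,z}\mathbf G=:\mathfrak F,\qquad \widetilde\Phi\arrowvert_{z=0}=\psi .
\]
Since $\rho_{3,j}(x,z)=z+2+\widetilde\eta(x)$ does not depend on $j$, one has $A_1=A_2$ for $z<-2$, so $\mathfrak F$ is supported in $J=(-2,0)$; moreover every entry of $A_2-A_1$ is, up to bounded factors controlled by $\mathcal F(\|(\eta_1,\eta_2)\|_{W^{1,\infty}\times W^{1,\infty}})$, a linear combination of the components of the vector $\wp$ of Lemma~\ref{lem:wp}.

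\emph{Estimating the source.} The point is to keep $\mathfrak F$ in divergence form, so that no derivative ever falls on the coefficient difference $A_2-A_1$. By Lemma~\ref{lem:wp} one has $\|w(A_2-A_1)\|_{L^2(J,L^2)_{ul}}\le\mathcal F(\ldots)\|w(\eta_1-\eta_2)\|_{H^\mez_{ul}}$, and by the same argument --- i.e.\ the $\mez$-smoothing of the Poisson kernel (Proposition~\ref{L2smooth}) --- carried out with $s-1$ additional spatial derivatives, $\|w(A_2-A_1)\|_{L^2(J,H^{s-1})_{ul}}\le\mathcal F(\ldots)\|w(\eta_1-\eta_2)\|_{H^{s-\mez}_{ul}}$ and likewise $\|w\,\partial_z(A_2-A_1)\|_{L^2(J,H^{-1})_{ul}}\le\mathcal F(\ldots)\|w(\eta_1-\eta_2)\|_{H^\mez_{ul}}$. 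On the other hand $\widetilde\Phi_2$ solves \eqref{elliptique} with $F=0$, so Corollary~\ref{coroetape1}$(i)$ (with the straightening \eqref{diffeoj}, trivial weight, $\sigma=s-1$) gives $\|\nabla_{x,z}\widetilde\Phi_2\|_{X^{s-1}_{ul}}\le\mathcal F(\|\eta_2\|_{H^{s+\mez}_{ul}})\|\psi_2\|_{H^s_{ul}}$, so in particular $\nabla_{x,z}\widetilde\Phi_2\in L^\infty$ (as $s-1>\tfrac d2$), and the equation for $\widetilde\Phi_2$ with Lemma~\ref{est-alpha} gives $\partial_z^2\widetilde\Phi_2\in L^2(J,H^{s-2})_{ul}$. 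Placing the weight always on the $A_2-A_1$ factor and leaving $\widetilde\Phi_2$ unweighted, the product rule \eqref{product:rule} yields
\[
\|w\mathbf G\|_{L^2(J,L^2)_{ul}}\le\mathcal F\big(\|(\eta_1,\eta_2)\|_{H^{s+\mez}_{ul}\times H^{s+\mez}_{ul}}\big)\,\|w(\eta_1-\eta_2)\|_{H^{s-\mez}_{ul}}\,\|\psi_2\|_{H^s_{ul}} ,
\]
together with the companion bounds (in the $Y$-type norm used for the inhomogeneity of \eqref{elliptique}) needed to handle $\mathfrak F=\cn_{x,z}\mathbf G$ in Step~2; it is the term $(A_2-A_1)\,\partial_z\nabla_{x,z}\widetilde\Phi_2$ in $\partial_z\mathbf G_z$ that forces the use of $\|w(A_2-A_1)\|_{L^2(J,H^{s-1})_{ul}}$, hence the index $s-\mez$.

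\emph{The weighted energy estimate.} I would then follow the two steps of the proof of Proposition~\ref{rec00}. Decompose $\Phi_j=\sum_q\Phi_{j,q}$ with data $\chi_q\psi_j$ (\S\ref{defiDN}) and set $\widetilde\Phi_q=\widetilde\Phi_{1,q}-\widetilde\Phi_{2,q}$, which solves the inhomogeneous problem with data $\chi_q\psi$ and source $\cn_{x,z}\mathbf G_q$, $\mathbf G_q=(A_2-A_1)\nabla_{x,z}\widetilde\Phi_{2,q}$. In Step~1, the exponentially weighted Lax--Milgram argument of \cite[Lemma~3.6]{ABZ2}, applied to this problem by testing against $e^{2\mu\langle x-q\rangle}\widetilde\Phi_q$ (the flux term $-\iint\mathbf G_q\cdot\nabla_{x,z}\!\big(e^{2\mu\langle\cdot-q\rangle}\widetilde\Phi_q\big)$ being absorbed via Cauchy--Schwarz, the Poincar\'e inequality of Lemma~\ref{lemm:Poincare} and $\partial_z\rho_1\ge c_0$), gives, for $\mu$ as in \eqref{delta1},
\[
\|e^{\mu\langle x-q\rangle}\nabla_{x,z}\widetilde\Phi_q\|_{L^2(J\times\xR^d)}\le\mathcal F\big(\|(\eta_1,\eta_2)\|_{W^{1,\infty}\times W^{1,\infty}}\big)\Big(\|e^{\mu\langle x-q\rangle}\mathbf G_q\|_{L^2(J\times\xR^d)}+\|\chi_q\psi\|_{H^\mez}\Big);
\]
multiplying by $w$, treating $we^{\mu\langle\cdot-q\rangle}$ as a single weight in $\mathcal W$ (as in the proof of Lemma~\ref{l1estimate}), using $\sum_qw(k)w^{-1}(q)e^{-\mu\langle k-q\rangle}<\infty$ for $w\in\mathcal W_{po}(\varrho)$, and summing over $q$ --- the weighted flux estimated cube by cube as above, $\widetilde\Phi_{2,q}$ carrying the localized bounds of Lemma~\ref{l1estimate} --- one obtains $\sup_k\|\chi_kw\nabla_{x,z}\widetilde\Phi\|_{L^2(J\times\xR^d)}$ bounded by the right-hand side of \eqref{dphi1/2}. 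In Step~2, this is upgraded to $L^\infty_zH^{-\mez}_x$ via the interpolation Lemma~\ref{interpo}: one estimates $\|\chi_kw\,\partial_z\nabla_{x,z}\widetilde\Phi\|_{L^2(J,H^{-1})}$ by isolating $\partial_z^2\widetilde\Phi$ from $\cn_{x,z}(A_1\nabla_{x,z}\widetilde\Phi)=\mathfrak F$ --- so it is a combination of $\alpha_1\Delta_x\widetilde\Phi$, $\beta_1\!\cdot\!\nabla_x\partial_z\widetilde\Phi$, $\gamma_1\partial_z\widetilde\Phi$, $\nabla_x\!\cdot\!\mathbf G_x$ and $\partial_z\mathbf G_z$ --- and invoking Lemma~\ref{est-alpha}, the product rule, \eqref{Poincare:w}, the Step~1 bound and the bounds on $\mathbf G$ above, exactly as the terms $F_0,F_1$ are handled in the proof of Proposition~\ref{rec00}. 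This yields \eqref{dphi1/2}, with $\mathcal F$ depending on $w$ only through the seminorms of $r_i,r_i'$ in $C^\infty_b(\xR^d)$ (Remark~\ref{remark: -mez}).

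The step I expect to be the main obstacle is the source estimate and the attendant bookkeeping: one must write $\mathfrak F$ in divergence form so that the coefficient difference is never differentiated more than once, carry the weight on the $\eta_1-\eta_2$ factor while $\widetilde\Phi_2$ keeps the \emph{unweighted} $X^{s-1}_{ul}$ regularity of Corollary~\ref{coroetape1}, and use the $\mez$-smoothing of the Poisson kernel to trade the derivatives of $A_2-A_1$ against the regularity $s-\mez$ of $\eta_1-\eta_2$ --- it is precisely the $L^2(J,H^{s-1})_{ul}$ control of $A_2-A_1$ entering Step~2 that fixes the index $H^{s-\mez}_{ul}$ in \eqref{dphi1/2}. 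With these estimates in hand, the rest is a weighted transcription of the energy argument of \cite{ABZ2}.
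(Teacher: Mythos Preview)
Your plan is essentially the paper's proof, recast with the coefficient matrix $A_j$ in place of the paper's pair $(\Lambda^j,J_j)$: test each $q$-piece against the exponentially weighted lifting, control the flux through Lemma~\ref{l1estimate'}, sum over $q$ using $w\in\mathcal W_{po}$, and upgrade to $L^\infty_zH^{-\mez}$ by interpolation and the equation.

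Two technical points your sketch leaves implicit but which are the hinges of the argument. First, the admissible test function is $e^{2\delta g_\epsilon}(\widetilde\Phi_q-\widetilde{\underline\psi}_q)$ with $g_\epsilon=\langle x-q\rangle/(1+\epsilon\langle x-q\rangle)$: you need the mollification so that the test function lies in $H^{1,0}(\widetilde\Omega)$, and you must subtract a lifting $\widetilde{\underline\psi}_q$ of the trace $\chi_q\psi$ since $\widetilde\Phi_q$ does not vanish at $z=0$. Second --- and this is the crux --- the flux $\|e^{\delta g_\epsilon}(A_2-A_1)\nabla_{x,z}\widetilde\Phi_{2,q}\|_{L^2}$ is \emph{not} estimated from your global bound $\|w\mathbf G\|_{L^2_{ul}}$ (that bound is used only in the final interpolation step); instead you must insert $1=w\cdot w^{-1}$, place $w$ on $A_2-A_1$ in the $L^2_{ul}$-norm via \eqref{est:product}, and apply Lemma~\ref{l1estimate'} \emph{with the weight $w^{-1}$} to $\nabla_{x,z}\widetilde\Phi_{2,q}$, obtaining $\mathcal F\|w\eta\|_{H^{\mez}_{ul}}\|w^{-1}\psi_{2,q}\|_{H^s}\sim\mathcal F\|w\eta\|_{H^{\mez}_{ul}}\,w(q)^{-1}\|\chi_q\psi_2\|_{H^s}$. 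It is precisely this factor $w(q)^{-1}$ that pairs with the $w(k)$ coming from localizing at cube $k$ and makes your sum $\sum_q w(k)w(q)^{-1}e^{-\delta\langle k-q\rangle}$ converge uniformly in $k$; bounding the flux without the $w\cdot w^{-1}$ splitting and only afterwards multiplying by $w$ would leave an uncompensated $w(k)$.
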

 For the proof of this result, we shall apply Lemma \ref{l1estimate} for $\widetilde\Phi_q$. However, $\widetilde\Phi_q$ here is the image of $\Phi_q$ via the diffeomorphism corresponding to one of $\rho_j$ defined  by \eqref{diffeoj} instead of \eqref{diffeo}. We want the same result as Lemma \ref{l1estimate} in this situation. To have this, we notice that on $J=(-2, 0)$, $\rho_j$ is comprised of two functions $\rho_{1,j}$ for $z\in (-1,0)$ and $\rho_{2,j}$ for $z\in (-2, -1]$. The function $\rho_{1,j}$ possesses the same properties as $\rho$ does and so does $\rho_{2,j}$ since $\widetilde \eta\in C_b^\infty\subset H^\infty_{ul}$. Therefore, we obtain 
\begin{lemm}\label{l1estimate'}
Let $w\in \mathcal{W}_{po}(\varrho),~\varrho\ge 0$ and $\Phi_{j,q},~\psi_{j,q}$,~$j=1,2,~q\in \xZ$ as in section \ref{defiDN}. There exists $\mathcal{F}_1$ non-decreasing such that: if $0<\mu \mathcal{F}_1(\lA \eta_j\rA_{H^s_{ul}})\le 1$ then one can find a non-decreasing function $\mathcal{F}$ independent of $q$ such that 
\[
\sum_{k\in \xZ^d}\| we^{\frac{\mu}{2} \langle x-q\rangle}\chi_k  \nabla_{x,z}\widetilde{\Phi}_{j,q}\|_{L^{\infty}(J\times \xR^d)}\le \mathcal{F}(\|\eta_j\|_{H^{s+\frac{1}{2}}_{ul}}) \|w\psi_{j,q}\|_{H^s}.
\]
\label{l1estimate'}
\end{lemm}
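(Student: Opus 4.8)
The plan is to transcribe the proof of Lemma \ref{l1estimate} almost word for word; the only genuinely new point is to check that, on the slab $J=(-2,0)$, the piecewise diffeomorphism $\rho_j$ of \eqref{diffeoj} enjoys the three structural features of the diffeomorphism $\rho$ of \eqref{diffeo} that were actually used there: (a) the lower bound $\partial_z\rho_j\ge c_0>0$ together with the Lipschitz bounds, (b) the coefficient estimates of Lemma \ref{est-alpha} (which then propagate to Lemma \ref{estF0F1} and to the elliptic regularity Theorem \ref{regell}), and (c) the weighted exponential bound \eqref{uq:e} for the variational pieces $u_q$, now on the domain $\Omega_j$. Property (a) is precisely Lemma \ref{bound:dz}, valid once $\delta$ is small relative to $\Vert\eta_j\Vert_{W^{1,\infty}}$; the threshold $0<\mu\mathcal{F}_1(\Vert\eta_j\Vert_{H^s_{ul}})\le 1$ in the statement is chosen so as to encode at once this smallness of $\delta$ and, via the embedding $H^s_{ul}\hookrightarrow L^\infty$, the smallness condition $\mu C_*\Vert\eta_j-\eta_*\Vert^2_{L^\infty}\le 1$ that (c) requires.

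For (b) I would note that on $\widetilde\Omega_1=\xR^d\times(-1,0)$ the function $\rho_{1,j}$ is literally formula \eqref{diffeo} with $h$ replaced by $h/3$, so Lemma \ref{est-alpha} applies unchanged there, while on $\widetilde\Omega_2=\xR^d\times[-2,-1]$ the function $\rho_{2,j}$ is affine in $z$ with $x$-coefficients built from the mollification $e^{\delta(z+1)\langle D_x\rangle}\eta_j$ — bounded in every $H^\mu_{ul}(\xR^d)$, uniformly for $z\in[-2,-1]$, by $\mathcal{F}(\Vert\eta_j\Vert_{H^{s+\mez}_{ul}})$ — and from $\widetilde\eta\in C^\infty_b(\xR^d)\subset H^\infty_{ul}(\xR^d)$; inserting these into \eqref{alpha} and using the product rule \eqref{product:rule} gives the analogue of Lemma \ref{est-alpha} on $[-2,-1]$, where in fact the regularity is unbounded. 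Consequently Lemma \ref{estF0F1} and Theorem \ref{regell} hold with $(J,\rho)$ replaced by $((-2,0),\rho_j)$, for every weight in the family $we^{\lambda\langle\cdot-q\rangle}$, $q\in\xZ^d$, $0\le\lambda\le\mu$, whose $C^\infty_b$-seminorms of $r_i,r_i'$ are bounded uniformly in $q$ — and this last observation is the only place where the precise shape of the extra factor is used.

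Next I would produce the base estimate in $X^{-\mez}_{ul}(J)$ exactly as in the proof of Proposition \ref{rec00}: since $\Omega_j$ is the whole original fluid domain, which is of finite depth and contains a strip of height $h$, Lemma $3.6$ of \cite{ABZ2} furnishes \eqref{uq:e} on $\Omega_j$ for $\mu$ as above; adding the arbitrary-$\mu$ bound for $\underline\psi_q$, changing variables by $\rho_j$, and summing the resulting $q$-series, which converges because $e^{-\frac{\mu}{4}\langle p-q\rangle}w(p)w^{-1}(q)\les e^{-\frac{\mu}{4}\langle p-q\rangle}\langle p-q\rangle^\varrho$ is summable, yields the $L^2_zL^2_x$ bound; interpolation (Lemma \ref{interpo}) upgrades it to $X^{-\mez}_{ul}(J)$, with constants independent of $q$. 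Feeding this into Theorem \ref{regell} with $\sigma=s-1$, $F=0$ and weight $we^{\frac{3\mu}{4}\langle\cdot-q\rangle}$, and using that $\psi_{j,q}=\chi_q\psi_j$ is supported where $e^{\frac{3\mu}{4}\langle x-q\rangle}\sim 1$, I get $\Vert we^{\frac{3\mu}{4}\langle\cdot-q\rangle}\nabla_{x,z}\widetilde\Phi_{j,q}\Vert_{X^{s-1}_{ul}(J)}\le\mathcal{F}(\Vert\eta_j\Vert_{H^{s+\mez}_{ul}})\Vert w\psi_{j,q}\Vert_{H^s}$. Then, on $\supp\chi_k$ one has $e^{\frac{\mu}{2}\langle x-q\rangle}\le Ce^{\frac{\mu}{2}\langle k-q\rangle}=Ce^{-\frac{\mu}{4}\langle k-q\rangle}e^{\frac{3\mu}{4}\langle k-q\rangle}\le Ce^{-\frac{\mu}{4}\langle k-q\rangle}e^{\frac{3\mu}{4}\langle x-q\rangle}$, the embedding $X^{s-1}_{ul}(J)\hookrightarrow L^\infty(J\times\xR^d)$ holds because $s-1>\frac d2$, and $\sum_{k\in\xZ^d}e^{-\frac{\mu}{4}\langle k-q\rangle}<\infty$ uniformly in $q$; summing over $k$ closes the argument.

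The main obstacle — indeed essentially the only one — is keeping track of the uniformity in $q$ of all constants, which as noted reduces to the single fact that $\{we^{\lambda\langle\cdot-q\rangle}:q\in\xZ^d,\ 0\le\lambda\le\mu\}$ is a family of acceptable weights with uniformly controlled $r_i,r_i'$-seminorms; with that in hand every step above is a line-by-line transcription of \cite{ABZ1,ABZ2} and of the preceding lemmas. A minor additional point is that $\rho_j$ is only Lipschitz, not $C^1$, across $z=-1$ and $z=-2$, but since the elliptic estimates are carried out slab by slab and the ground $L^2_zL^2_x$ bound comes from the global variational solution on $\Omega_j$, this causes no trouble.
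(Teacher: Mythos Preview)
Your proposal is correct and follows exactly the approach the paper takes: the paper's own argument (given in the paragraph immediately preceding the lemma) is simply that on $J=(-2,0)$ the piecewise diffeomorphism $\rho_j$ has the same structural properties as $\rho$ in \eqref{diffeo}, since $\rho_{1,j}$ is literally \eqref{diffeo} with $h/3$ in place of $h$ and $\rho_{2,j}$ involves the additional datum $\widetilde\eta\in C^\infty_b\subset H^\infty_{ul}$, so the proof of Lemma~\ref{l1estimate} transfers verbatim. Your write-up is in fact considerably more detailed than the paper's, spelling out the roles of Lemma~\ref{bound:dz}, Lemma~\ref{est-alpha}, the base estimate via \eqref{uq:e}, and the uniformity in $q$ of the family $we^{\lambda\langle\cdot-q\rangle}$ --- all of which the paper leaves implicit.
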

\begin{proof} ({\it of Theorem \ref{diffPhi:low}}) For simplicity in notations we shall denote $\mathcal{F}=\mathcal{F}(\|\eta_1\|_{H^{s+\frac{1}{2}}_{ul}}, \|\eta_2\|_{H^{s+\frac{1}{2}}_{ul}})$ which may change from line to line. We proceed in the following steps.\\
{\it Step 1.} Let $\Phi_{j,q}=u_{j,q}+\underline{\psi}_{j,q}$ where $u_{j,q}$ is the variational solution characterized by (\ref{eqvar}). After changing the variables,  \eqref{eqvar} becomes
\[
\int_{\widetilde\Omega}\Lambda^j\widetilde{\Phi}_{j,q}\Lambda^j\theta J_jdX=0, \quad\forall \theta\in H^{1,0}(\widetilde\Omega),~  j=1,2
\]
with the Jacobian $J_j=|\partial_z\rho_j|=\partial_z\rho_j\ge c_0>0$ a.e. in $\widetilde \Omega$ (by Lemma \ref{bound:dz}). \\
\hk Set $\widetilde \Phi_q=\widetilde\Phi_{1,q}-\widetilde \Phi_{2, q},~{\underline\psi}_q=\underline\psi_{1,q}-\underline\psi_{2,q}$ and choose 
$$\theta=e^{2\delta g_{\ep}}(\widetilde{\Phi}_q-\widetilde{\underline\psi}_q)\in H^{1,0}(\widetilde\Omega)$$
 where $g_{\ep}=\frac{\langle x-q\rangle }{1+\ep\langle x-q\rangle}$.
It follows that 
\[
\left |\int_{\widetilde\Omega}\Lambda^1\widetilde \Phi_q \Lambda^1\theta J_1dX\right |\le \sum_{j=1}^3A_j,
\]
\[
\begin{cases}
A_1=\int_{\widetilde\Omega}|(\Lambda^1-\Lambda^2)\widetilde \Phi_{2,q}\Lambda^1\theta J_1|dX,\\
A_2=\int_{\widetilde\Omega}|\Lambda^2\widetilde \Phi_{2,q}(\Lambda^1-\Lambda^2)\theta J_1|dX,\\
A_3=\int_{\widetilde\Omega}|\Lambda^2\widetilde \Phi_{2,q}\Lambda^2\theta (J_1-J_2)|dX.
\end{cases}
\]
By Lemma \eqref{lem:wp} we know that $\Lambda^1-\Lambda^2=0$ in $\widetilde\Omega_3$. Likewise, $J_1-J_2=\partial_z\rho_1-\partial_z\rho_2=0$ in $\widetilde\Omega_3$. Consequently, with $\widetilde \Omega_0=\xR^d\times J$ we have $A_j,~j=1,2,3$ are equal to the corresponding integrals over $\widetilde\Omega_0$.\\
{\it Step 2.} (Estimate for $A_1$) First of all, we remark that
\bq\label{Lambda^i}
\Lambda^j(e^{2\delta g_{\ep}}U)=e^{2\delta g_{\ep}}\Lambda^jU+(0, U)2\delta e^{2\delta g_{\ep}}\nabla g_{\ep}.
\eq
Using Lemma \ref{lem:wp} and formula \eqref{Lambda^i} with $j=1,~U=\widetilde{\Phi}_q-\widetilde{\underline\psi}_q$ one can write
\begin{align*}
A_1&=\int_{\widetilde\Omega_0}e^{2\delta g_{\ep}}|\wp\partial_z\widetilde \Phi_{2,q}\Lambda^1(\widetilde\Phi_q-\widetilde{\underline\psi}_q) J_1|dX+2\delta \int_{\widetilde\Omega_0}e^{2\delta g_{\ep}}|\nabla g_{\ep}\wp_2\partial_z\widetilde \Phi_{2,q}(\widetilde\Phi_q-\widetilde{\underline\psi}_q) J_1|dX\\
&:=A_{1,1}+A_{1,2}.
\end{align*}
Since $\lA J_j\rA_{L^\infty_{x,z}}\le \cF$, we may estimate 
\bqa
A_{1,1}&\le & \cF\int_{\widetilde\Omega_0}e^{2\delta g_{\ep}}|\wp\partial_z\widetilde \Phi_{2,q}\Lambda^1(\widetilde\Phi_q-\widetilde{\underline\psi}_q)|dX\\
&\le &\cF\|\wp e^{\delta g_{\ep}}\partial_z\widetilde \Phi_{2,q}\|_{L^2(\widetilde\Omega_0)}\|e^{\delta g_{\ep}}\Lambda^1(\widetilde\Phi_q-\widetilde{\underline\psi}_q)\|_{L^2(\widetilde\Omega_0)}.
\eqa
On the other hand, there holds
\bq\label{est:product}
\begin{aligned}
\|f_1f_2\|_{L^2(J, L^2(\xR^d))}&\le \sum_k \|\widetilde\chi_kf_1\chi_kf_2\|_{L^2(J, L^2(\xR^d))}\\
&\le  \|f_1\|_{L^2(J, L^2(\xR^d)_{ul})}\sum_k\|\chi_kf_2\|_{L^{\infty}(J\times \xR^d)}.
\end{aligned}
\eq
Now we choose $\delta>0$ such that 
\bq
\delta \mathcal{F}_1(\|\eta_2\|_{H^{s+\frac{1}{2}}_{ul}})\le  \frac{1}{2}
\label{delta3}
\eq
then the condition of Lemma \ref{l1estimate'} is fulfilled with $\mu=2\delta$. It then follows from \eqref{est:product} and Lemma \ref{lem:wp} that
\bq\label{L2key}
\begin{aligned}
\|\wp e^{\delta g_{\ep}}\partial_z\widetilde \Phi_{2,q}\|_{L^2(\widetilde\Omega_0)}&\le  \|w\wp w^{-1}e^{\delta \langle x-q\rangle}\partial_z\widetilde \Phi_{2,q}\|_{L^2(J, L^2(\xR^d))}\\
&\le  \|w\wp\|_{L^2(J, L^2(\xR^d)_{ul})}\sum_k\|\chi_kw^{-1}e^{\delta \langle x-q\rangle}\partial_z\widetilde \Phi_{2,q}\|_{L^{\infty}(J\times \xR^d)}\\
&\le  \mathcal{F} \|w\eta\|_{H^{\mez}_{ul}}\|w^{-1}\psi_{2,q}\|_{H^s}.
\end{aligned}
\eq
Therefore, 
\[
A_{1,1}\le \cF\|e^{\delta g_{\ep}}\Lambda^1(\widetilde\Phi_q-\widetilde{\underline\psi}_q)\|_{L^2(\widetilde\Omega_0)} \|w\eta\|_{H^{\mez}_{ul}}\|w^{-1}\psi_{2,q}\|_{H^s}.
\]
For $A_{1,2}$ we have
\[
 A_{1,2} \le 2\delta\cF \| e^{\delta g_{\ep}}\wp_2\partial_z\widetilde \Phi_{2,q}  \|_{L^2(\widetilde\Omega_0)}\|e^{\delta g_{\ep}}(\widetilde\Phi_q-\widetilde{\underline\psi}_q)\|_{L^2(\widetilde\Omega_0)}.
\]
The first $L^2$-norm on the right-hand side is already estimated by \eqref{L2key}. For the second term on the right-hand side, one applies the Poincar\'e inequality in Lemma \ref{lemm:Poincare} (with $\mathcal{O}=\Omega_{1,1}\cup \Omega_{2, 1}$, which is diffeomorphic to $\widetilde \Omega_0$) to the image of $(\widetilde\Phi_q-\widetilde{\underline\psi}_q)$ under the inverse of $\rho_1(x,z),~(x,z)\in\widetilde\Omega_0$ and  then changes the variables back to $\widetilde\Omega_0$ to derive
\bq\label{est:Poincare}
\begin{aligned}
\|e^{\delta g_{\ep}}(\widetilde\Phi_q-\widetilde{\underline\psi}_q)\|_{L^2(\widetilde\Omega_0)}\le \cF\|e^{\delta g_{\ep}}\partial_z(\widetilde\Phi_q-\widetilde{\underline\psi}_q)\|_{L^2(\widetilde\Omega_0)}\le \cF\|e^{\delta g_{\ep}}\Lambda^1(\widetilde\Phi_q-\widetilde{\underline\psi}_q)\|_{L^2(\widetilde\Omega_0)}
\end{aligned}
\eq
from which we deduce that $A_{1,2}$ satisfies the same estimate as $A_{1,1}$ does and hence, so does $A_1$, i.e., 
\bq\label{est:A1}
A_1\le \cF\|e^{\delta g_{\ep}}\Lambda^1(\widetilde\Phi_q-\widetilde{\underline\psi}_q)\|_{L^2(\widetilde\Omega_0)} \|w\eta\|_{H^{\mez}_{ul}}\|w^{-1}\psi_{2,q}\|_{H^s}.
\eq
{\it Step 3.} (Estimates for $A_2,~A_3$) By Lemma \ref{est:wp} we have
\[
(\Lambda^1-\Lambda^2)\theta =\wp e^{2\delta g_\eps}\partial_z (\widetilde\Phi_q-\widetilde{\underline\psi}_q).
\]
It follows that 
\[
A_2\le \cF \|\wp e^{\delta g_{\ep}}\Lambda^2\widetilde \Phi_{2,q}\|_{L^2(\widetilde\Omega_0)}\|e^{\delta g_{\ep}}\partial_z(\widetilde\Phi_q-\widetilde{\underline\psi}_q)\|_{L^2(\widetilde\Omega_0)}.
\]
Using the definition of $\Lambda^2$ and the same method as in \eqref{L2key} one obtains that the first term is also bounded by the right-hand side of \eqref{L2key}. On the other hand, it is easy to see the second term is bounded by  $\cF\|e^{\delta g_{\ep}}\Lambda^1(\widetilde\Phi_q-\widetilde{\underline\psi}_q)\|_{L^2(\widetilde\Omega_0)}$. Therefore, $A_2$ also satisfies the bound \eqref{est:A1}.\\
For $A_3$ one uses the formula \eqref{Lambda^i} to get $A_3\le A_{3,1}+A_{3,2}$ with 
\begin{align*}
A_{3,1}&=\int_{\widetilde\Omega_0}e^{2\delta g_{\ep}}|\Lambda^2\widetilde \Phi_{2,q}\Lambda^2(\widetilde\Phi_q-\widetilde{\underline\psi}_q) (J_1-J_2)|dX,\\
A_{3,2}&=\delta \int_{\widetilde\Omega_0}e^{2\delta g_{\ep}}|\nabla g_{\ep}\Lambda^2_2\widetilde \Phi_{2,q}(\widetilde\Phi_q-\widetilde{\underline\psi}_q)(J_1-J_2)|dX.
\end{align*}
First, $A_{3,2}$ is estimated by 
$\|(J_1-J_2) e^{\delta g_{\ep}}\Lambda^2\widetilde \Phi_{2,q}\|_{L^2(\widetilde\Omega_0)}\|e^{\delta g_{\ep}}(\widetilde\Phi_q-\widetilde{\underline\psi}_q)\|_{L^2(\widetilde\Omega_0)}.
$
The second term is estimated by \eqref{est:Poincare} and the first term is estimated as in \eqref{L2key} with $\wp$ replaced by $J_1-J_2$ which satisfies
$
\| w(J_1-J_2)\|_{L^2(J, L^2_{ul})}\le \cF\|w\eta\|_{H^\mez_{ul}}.
$
Similarly, \[
A_{3,1}\le \|(J_1-J_2) e^{\delta g_{\ep}}\Lambda^2\widetilde \Phi_{2,q}\|_{L^2(\widetilde\Omega_0)}\|e^{\delta g_{\ep}}\Lambda^2(\widetilde\Phi_q-\widetilde{\underline\psi}_q)\|_{L^2(\widetilde\Omega_0)}.
\] 
We only need to study the second term on the right-hand side. With $u\defn\widetilde\Phi_q-\widetilde{\underline\psi}_q$ one has
$
\Lambda^2_1u=\frac{\partial_z\rho_1}{ \partial_z\rho_2}\Lambda^1_1u
$
which implies $
\|e^{\delta g_{\ep}}\Lambda^2_1u\|_{L^2(\widetilde\Omega_0)}\le \cF \|e^{\delta g_{\ep}}\Lambda^1_1u\|_{L^2(\widetilde\Omega_0)}.$ On the other hand, 
\[
\Lambda^2_2u=\nabla_xu-\frac{\nabla_x\rho_2}{\partial_z\rho_2}\partial_zu=\Lambda^1_2u+\left(\frac{\nabla_x\rho_1}{\partial_z\rho_1}-\frac{\nabla_x\rho_2}{\partial_z\rho_2}\right)\partial_z\rho_1\left(\frac{1}{\partial_z\rho_1}\partial_zu\right).
\]
Hence, $\|e^{\delta g_{\ep}}\Lambda^2_2u\|_{L^2(\widetilde\Omega_0)}\le \cF \|e^{\delta g_{\ep}}\Lambda^1u\|_{L^2(\widetilde\Omega_0)}$ and 
$\|e^{\delta g_{\ep}}\Lambda^2u\|_{L^2(\widetilde{\Omega}_0)}\le \cF\|e^{\delta g_{\ep}}\Lambda^1u\|_{L^2(\widetilde{\Omega}_0)}$.\\
In conclusion, we have proved that: for any (small) $\delta>0$ satisfying (\ref{delta3}), there holds
\begin{align}\label{diffphil1}
\left |\int_{\widetilde\Omega}\Lambda^1\widetilde \Phi_q \Lambda^1\theta J_1dX\right |
\le \cF\|e^{\delta g_{\ep}}\Lambda^1(\widetilde\Phi_q-\widetilde{\underline\psi}_q)\|_{L^2(\widetilde\Omega_0)} \|w\eta\|_{H^{\mez}_{ul}}\|w^{-1}\psi_{2,q}\|_{H^s}.
\end{align}
{\it Step 4.}  Next, in view of \eqref{Lambda^i} we write
\begin{equation}\label{diffphil2}
\begin{aligned}
&\int_{\widetilde\Omega}\Lambda^1\widetilde \Phi_q \Lambda^1\theta J_1dX\\
&=\int_{\widetilde\Omega}e^{2\delta g_{\ep}}\Lambda^1\widetilde \Phi_q\Lambda^1(\widetilde \Phi_q-\widetilde {\underline\psi}_q)J_1dX+2\delta \int_{\widetilde\Omega}\Lambda^1_2\widetilde \Phi_q.(\widetilde \Phi_q-\widetilde {\underline\psi}_q)e^{2\delta g_{\ep}}\nabla g_{\ep} J_1dX\\
&=\int_{\widetilde\Omega}e^{2\delta g_{\ep}}|\Lambda^1(\widetilde \Phi_q-\widetilde {\underline\psi}_q)|^2J_1dX+\int_{\widetilde\Omega}e^{2\delta g_{\ep}}\Lambda^1\widetilde {\underline\psi}_q\Lambda^1(\widetilde \Phi_q-\widetilde {\underline\psi}_q)J_1dX\\
&\quad+2\delta \int_{\widetilde\Omega}\Lambda^1_2(\widetilde \Phi_q-\widetilde {\underline\psi}_q)(\widetilde \Phi_q-\widetilde {\underline\psi}_q)e^{2\delta g_{\ep}}\nabla g_{\ep}J_1dX\\
&\quad+2\delta \int_{\widetilde\Omega}\Lambda^1_2\widetilde {\underline\psi}_q(\widetilde \Phi_q-\widetilde {\underline\psi}_q)e^{2\delta g_{\ep}}\nabla g_{\ep}J_1 dX:=B_1+B_2+B_3+B_4.
\end{aligned}
\end{equation}
Owing to the Poincar\'e inequality in Lemma \ref{Poincare} (applied with $\mathcal{O}=\Omega_1$) and a change of variables one has
\bq
|B_3|\le
 \delta \mathcal{F}_2(\|\eta_1\|_{H^{s+\frac{1}{2}}_{ul}})\|e^{\delta g_{\ep}}\Lambda^1(\widetilde \Phi_q-\widetilde {\underline\psi}_q)\|^2_{L^2(\widetilde\Omega)}
\eq
where $\mathcal{F}_2:\xR^+\to \xR^+$ is a non decreasing function. Likewise,
\bq\label{diffphil3}
|B_4|\le \delta \mathcal{F}_2(\|\eta_1\|_{H^{s+\frac{1}{2}}_{ul}})\|e^{\delta g_{\ep}}\Lambda^1\widetilde {\underline\psi}_q\|_{L^2(\widetilde\Omega)}\|\Lambda^1(\widetilde \Phi_q-\widetilde {\underline\psi}_q)e^{\delta g_{\ep}}\|_{L^2(\widetilde\Omega)}.
\eq
Finally, it is clear that
\bq\label{diffphil4}
|B_2|\le \mathcal{F}_2(\|\eta_1\|_{H^{s+\frac{1}{2}}_{ul}})\|e^{\delta g_{\ep}}\Lambda^1\widetilde {\underline\psi}_q\|_{L^2(\widetilde\Omega)}\|\Lambda^1(\widetilde \Phi_q-\widetilde {\underline\psi}_q)e^{\delta g_{\ep}}\|_{L^2(\widetilde\Omega)}.
\eq
Now, remark that there exists a constant $c_0$ depending only on $h$ such that $|J_1|\ge c_0$. Choose $\delta>0$ satisfying 
\bq
\delta \left\{\mathcal{F}_1(\|\eta_2\|_{H^{s+\frac{1}{2}}_{ul}})+ \mathcal{F}_2(\|\eta_1\|_{H^{s+\frac{1}{2}}_{ul}})\right\}= \min (\frac{c_0}{2}, \mez).
\label{delta2}
\eq
A combination of (\ref{diffphil1})-(\ref{diffphil4}) yields
\begin{align*}
\lA e^{\delta g_{\ep}}\Lambda^1(\widetilde \Phi_q-\widetilde {\underline\psi}_q)\rA_{L^2(\widetilde \Omega)}\le\cF \left\{\|w\eta\|_{H^{\mez}_{ul}}\|w^{-1}\psi_{2,q}\|_{H^s}+\|e^{\delta g_{\ep}}\Lambda^1\widetilde {\underline\psi}_q\|_{L^2(\widetilde\Omega)}\right\}.
\end{align*}
{\it Step 5.} Now, letting $\ep\to 0$ and taking into account properties $(i),~(ii)$ of $\underline{\psi}_q$ in section \ref{defiDN} lead to
\bq
\begin{aligned}
\|e^{\delta \langle x-q\rangle}\Lambda^1\widetilde \Phi_q\|_{L^2(\widetilde \Omega)}&\le \cF \left\{\|w\eta\|_{H^1_{ul}}\|w^{-1}\psi_{2,q}\|_{H^s}+\|e^{\delta \langle x-q\rangle}\Lambda^1\widetilde {\underline\psi}_q\|_{L^2(\widetilde \Omega)}\right\}\\
&\le \cF\left\{ \|w\eta\|_{H^1_{ul}}\|w^{-1}\psi_{2,q}\|_{H^s}+\|\psi_q\|_{H^{\mez}}\right\}.
\end{aligned}
\eq
Hence
\[
\|e^{\delta \langle x-q\rangle}\nabla_{x,z}\widetilde \Phi_q\|_{L^2(\widetilde \Omega)}\le \mathcal{F}\left\{\|w\eta\|_{H^1_{ul}}\|w^{-1}\psi_{2,q}\|_{H^s}+\|\psi_q\|_{H^{\mez}}\right\}.
\]
Consequently,
\bq
\begin{aligned}
\|\chi_pw\nabla_{x,z}\widetilde \Phi_q\|_{L^2(J, L^2(\xR^d))}&\le e^{-\delta\langle p-q\rangle}w(p)\mathcal{F}\left\{\|w\eta\|_{H^{\mez}_{ul}}\|w^{-1}\psi_{2,q}\|_{H^s}+\|\psi_q\|_{H^{\mez}(\Omega)}\right\}\\
&\le  e^{-\delta\langle p-q\rangle}w(p)w(q)^{-1}\mathcal{F}\left\{\|w\eta\|_{H^{\mez}_{ul}}\|{\psi}_{2,q}\|_{H^s}+\|w\psi_q\|_{H^{\mez}}\right\}.
\end{aligned}
\eq
Finally, we get
\bq\label{ctlow1}
\begin{aligned}
\|\chi_pw\nabla_{x,z}\widetilde \Phi\|_{L^2(J, L^2(\xR^d))}&\le  \sum_q\|\chi_pw\nabla_{x,z}\widetilde \Phi_q\|_{L^2(J, L^2(\xR^d))}\\
&\le \mathcal{F}\left\{ \|w\eta\|_{H^{\mez}_{ul}}\|{\psi}_2\|_{H^s_{ul}}+\|w\psi\|_{H^{\mez}_{ul}}\right\}.
\end{aligned}
\eq
{\it Step 6.} It remains to prove that $\|\chi_pw\nabla_{x,z}\widetilde \Phi\|_{L^{\infty}(J, H^{-\mez}(\xR^d))}$ is bounded by the right hand side of (\ref{dphi1/2}).\\
\hk The estimate of $\|\chi_pw\nabla_{x}\widetilde \Phi\|_{L^{\infty}(J, H^{-\mez}(\xR^d))}$ follows from (\ref{ctlow1}) and the interpolation Lemma \ref{interpo}. By the same lemma,  for $\|\chi_pw\partial_z\widetilde \Phi\|_{L^{\infty}(J, H^{-\mez}(\xR^d))}$ it remains to estimate $$\|\chi_pw\partial^2_z\widetilde \Phi\|_{L^2(J, H^{-1}(\xR^d))}.$$ 
For this purpose we use  equation (\ref{e:Phi}) below, satisfied by $\widetilde\Phi$ to have
\begin{equation}\label{diffPhi1}
\begin{aligned}
\|\chi_pw\partial^2_z\widetilde \Phi\|_{L^2(J, H^{-1}(\xR^d))}\le \, &\|\chi_pw\alpha_1\Delta\widetilde \Phi\|_{L^2(J, H^{-1}(\xR^d))}+\|\chi_pw\beta_1.\nabla\partial_z\widetilde \Phi\|_{L^2(J, H^{-1}(\xR^d))}\\
&+\|\chi_pw\gamma_1\partial_z\widetilde \Phi\|_{L^2(J, H^{-1}(\xR^d))}+\|\chi_pwF\|_{L^2(J, H^{-1}(\xR^d))}.
\end{aligned}
\end{equation}
In the above inequality, $\alpha_1,~\beta_1,~\gamma_1$ are defined as in \eqref{alpha}, for the diffeomorphism $\rho$ defined in term of $\eta_1$.\\
Because $-1<s-2$, the estimate \eqref{contractDN1} applied with $f=\psi_2$ implies the desired estimate for $\|\chi_pwF\|_{L^2(J, H^{-1}(\xR^d))}$. Concerning the other terms, the product rule \eqref{product:rule} gives
\begin{equation}\label{F:H^-1}
 \begin{aligned}
 \Vert \chi_pw\alpha_1\Delta\widetilde \Phi\Vert_{L^2(J, H^{-1})} &\leq C\Vert \widetilde\chi_p\alpha_1\Vert _{L^\infty(J, H^{s-\mez})} \Vert \chi_pw\Delta\widetilde \Phi \Vert_{L^2(J, H^{-1})}\\
 \Vert \chi_p w\beta_1 \cdot\partialx\partial_z \widetilde \Phi\Vert_{L^2(J, H^{-1})} &\leq C\Vert \widetilde\chi_p \beta_1 \Vert _{L^\infty(J, H^{s-\mez})} \Vert \chi_pw\partialx\partial_z \widetilde \Phi\Vert_{L^2(J, H^{-1})}\\
\Vert  \chi_pw\gamma_1\partial_z\widetilde \Phi\Vert_{L^2(J, H^{-1})} &\leq C\Vert \widetilde\chi_p\gamma_1 \Vert _{L^\infty(J, H^{s-\frac{3}{2}})} \Vert \chi_pw\partial_z \widetilde \Phi \Vert_{L^2(J, L^2)}.
  \end{aligned}
\end{equation}
Owing to \eqref{ctlow1} we are left with the estimates for the first term on the right-hand side of the above inequalities. Again, this is done along the same line as in the proof of Lemma \ref{est-alpha} noticing that $\widetilde\eta\in C_b^\infty\subset H^\infty_{ul}$. This completes the proof.
\end{proof}
We are now in position to derive the weighted estimate for the Dirichlet-Neumann operator:
\begin{theo}\label{lip:G}
Assume that $s>1+\frac{d}{2}$. Then for every $w\in \mathcal{W}_{po}(\varrho),~\varrho\ge 0$ there exists $\mathcal{F}:\xR^+\to \xR^+$ non decreasing such that for all $\eta_1, \eta_2\in H^{s+\frac{1}{2}}_{ul}(\xR^d)$ and $f\in H^s_{ul}(\xR^d)$
we have
\[
\|w[G(\eta_1)-G(\eta_2)]f\|_{H^{s-\frac{3}{2}}_{ul}}\le \mathcal{F}(\|(\eta_1, \eta_2)\|_{H^{s+\frac{1}{2}}_{ul}\times H^{s+\frac{1}{2}}_{ul}})\|w(\eta_1-\eta_2)\|_{H^{s-\frac{1}{2}}_{ul}}\|f\|_{H^{s}_{ul}}.
\]
\end{theo}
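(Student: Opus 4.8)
The plan is to transcribe the scheme of \cite{ABZ2} for the contraction of the Dirichlet--Neumann operator into the weighted uniformly local framework, using the estimates established above as the weighted replacements of the unweighted ingredients. Throughout write $\eta=\eta_1-\eta_2$; since the two elliptic problems \eqref{eq:Phij} are taken with the \emph{same} Dirichlet datum $\psi_1=\psi_2=f$, let $\widetilde\Phi_j$ denote the image of $\Phi_j$ under the diffeomorphism $\rho_j$ of \eqref{diffeoj}, $\widetilde\Phi=\widetilde\Phi_1-\widetilde\Phi_2$, and $\alpha_j,\beta_j,\gamma_j$ the coefficients \eqref{alpha} associated with $\rho_j$. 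Because $\psi_1=\psi_2$, Theorem \ref{diffPhi:low} already furnishes the base estimate
\[
\|w\nabla_{x,z}\widetilde\Phi\|_{X^{-\mez}_{ul}(J)}\le \cF(\|(\eta_1,\eta_2)\|_{H^{s+\mez}_{ul}\times H^{s+\mez}_{ul}})\,\|w\eta\|_{H^{s-\mez}_{ul}}\,\|f\|_{H^s_{ul}},
\]
and moreover $\widetilde\Phi\arrowvert_{z=0}=\psi_1-\psi_2=0$.

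The next step is a weighted elliptic regularity argument on the difference. Subtracting the equations $(\mathcal{L}_j-\gamma_j\partial_z)\widetilde\Phi_j=0$ shows that $\widetilde\Phi$ solves $(\mathcal{L}_1-\gamma_1\partial_z)\widetilde\Phi=F$ (this is equation \eqref{e:Phi}), with $\mathcal{L}_1$ the operator \eqref{L} built from $\rho_1$ and
\[
F=-\big[(\alpha_1-\alpha_2)\Delta_x+(\beta_1-\beta_2)\cdot\nabla_x\partial_z-(\gamma_1-\gamma_2)\partial_z\big]\widetilde\Phi_2 .
\]
Since $z_0\in(-1,0)$, on the slab $(z_0,0)$ the maps $\rho_j$ reduce to the standard diffeomorphisms $\rho_{1,j}$, so Theorem \ref{regell} (together with Remark \ref{remark: -mez}) applies with $\sigma=s-\tdm$, base datum the $X^{-\mez}$ bound above, and vanishing boundary value; it reduces everything to the source estimate $\|wF\|_{Y^{s-\tdm}_{ul}(J)}\le \cF\,\|w\eta\|_{H^{s-\mez}_{ul}}\,\|f\|_{H^s_{ul}}$. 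For this I would use, first, weighted contraction bounds for the coefficients at the regularities used in Lemma \ref{est-alpha} and Lemma \ref{estF0F1}, shifted down by one derivative, i.e. $\|w(\alpha_1-\alpha_2)\|_{X^{s-\tdm}_{ul}}+\|w(\beta_1-\beta_2)\|_{X^{s-\tdm}_{ul}}+\|w(\gamma_1-\gamma_2)\|_{L^2(J,H^{s-2}_{ul})}\le \cF\,\|w\eta\|_{H^{s-\mez}_{ul}}$, which follow from the proof of Lemma \ref{est-alpha} combined with the weight bookkeeping of Lemma \ref{lem:wp}, using that $\rho_1-\rho_2$ depends linearly on $\eta_1-\eta_2$ through \eqref{diffeoj} and the smoothing of the Poisson-type kernels (Proposition \ref{L2smooth}); and, second, the \emph{unweighted} interior regularity of $\widetilde\Phi_2$: by Corollary \ref{coroetape1}$(i)$ with $w\equiv1$ and $\sigma=s-1$ (valid for the diffeomorphism $\rho_2$, cf.\ the discussion preceding Lemma \ref{l1estimate'}), $\|\widetilde\Phi_2\|_{X^s_{ul}}+\|\nabla_{x,z}\widetilde\Phi_2\|_{X^{s-1}_{ul}}\le \cF\|f\|_{H^s_{ul}}$. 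The product rule \eqref{product:rule} with $s_0=s-2$, admissible because $s>1+\frac{d}{2}$, then places each term of $wF$ in $L^2(J,H^{s-2}_{ul})\subset Y^{s-\tdm}_{ul}(J)$, the weight being carried by the coefficient difference and never by $\widetilde\Phi_2$. This yields $\|w\nabla_{x,z}\widetilde\Phi\|_{X^{s-\tdm}_{ul}(z_0,0)}\le \cF\,\|w\eta\|_{H^{s-\mez}_{ul}}\,\|f\|_{H^s_{ul}}$.

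It then remains to take traces. By \eqref{G=} adapted to $\rho_j$, $G(\eta_j)f=H_j\arrowvert_{z=0}$ with $H_j=\Lambda^j_1\widetilde\Phi_j-\nabla_x\rho_j\cdot\Lambda^j_2\widetilde\Phi_j$. Write $H_1-H_2=P+Q$, where $P$ collects the terms linear in $\widetilde\Phi$ with coefficients built from $\rho_1$ (so $P$ is a smooth function of $\nabla_{x,z}\rho_1$ times $\nabla_{x,z}\widetilde\Phi$), and $Q$ collects the terms in which one of the differences $\Lambda^1-\Lambda^2$, $\nabla_x(\rho_1-\rho_2)$ or $\partial_z\rho_1-\partial_z\rho_2$ acts on $\widetilde\Phi_2$. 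For $P$, the interpolation Lemma \ref{interpo} on a slab $(z_0,0)\subset(-1,0)$ bounds $\|\chi_q wP\|_{H^{s-\tdm}}$ by $\|\chi_q wP\|_{L^2(J,H^{s-1})}+\|\chi_q w\partial_z P\|_{L^2(J,H^{s-2})}$; using the identity $\partial_z H=-\nabla_x\big((\partial_z\rho)\Lambda_2\widetilde\Phi\big)$ (as in the proof of Proposition \ref{coroetape2}) together with Lemma \ref{est-alpha} and \eqref{product:rule}, both are controlled by $\cF\,\|w\nabla_{x,z}\widetilde\Phi\|_{X^{s-\tdm}_{ul}(z_0,0)}$, hence by the right-hand side of the theorem. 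For $Q$, the same interpolation step plus the $L^2$--$L^\infty$ splitting \eqref{est:product} reduces matters to the $H^{s-\mez}$-analogue of Lemma \ref{lem:wp} for the $\rho_j$-differences (controlled by $\|w\eta\|_{H^{s-\mez}_{ul}}$) multiplied by the unweighted regularity of $\widetilde\Phi_2$ from Corollary \ref{coroetape1}$(i)$. Taking the supremum over $q\in\xZ^d$ of the partition of unity finishes the proof.

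I expect the main obstacle to be the analysis of the second step: one must establish the weighted contraction estimates for $\alpha_1-\alpha_2$, $\beta_1-\beta_2$, $\gamma_1-\gamma_2$ (and for the operator differences entering $Q$) with the weight attached \emph{only} to $\eta_1-\eta_2$, which forces the same off-diagonal $\ell^1$-summation device already used in Lemma \ref{lem:wp} and Theorem \ref{diffPhi:low}, and one must run the half-derivative bookkeeping in \eqref{product:rule} carefully enough that the low regularity $s-\tdm$ of the differences still closes --- exactly the delicate point present in the unweighted argument of \cite{ABZ2}. Everything else is a routine transcription.
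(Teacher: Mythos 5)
Your proposal is correct and follows essentially the same route as the paper's proof: the base $X^{-1/2}_{ul}$ bound from Theorem \ref{diffPhi:low}, the source estimate for $F$ via the weighted $\frac12$-smoothing of the coefficients (Proposition \ref{L2smooth}) paired with \emph{unweighted} interior regularity of $\widetilde\Phi_2$ from Corollary \ref{coroetape1}$(i)$ and the product rule with $s_0=s-2$, the elliptic lift through Theorem \ref{regell} to $X^{s-\frac32}_{ul}(z_0,0)$, and the trace step via Lemma \ref{interpo} together with equation \eqref{e:Phi} and the product rule \eqref{product:w}. Your explicit $P+Q$ split of $H_1-H_2$ is just a slightly more spelled-out version of what the paper compresses into ``using \eqref{DNformula} and the product rule \eqref{product:w} once again.''
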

\begin{proof}
Let $\Phi_j$ defined as in \eqref{eq:Phij} (with $\psi_j=f$, $j=1,2$) and $\widetilde\Phi_j$ be its image via the diffeomorphism $(x,z)\mapsto (x, \rho_j(x,z))$ given by  \eqref{diffeoj}. We have from definition \eqref{G=} of the Dirichlet-Neumann operator
\bq\label{DNformula}
G(\eta_j)f=\left(\frac{1+|\nabla_x\rho_j|^2}{\partial_z\rho_j}\partial_z\widetilde{\Phi}_j-\nabla_x\rho_j\nabla_x\widetilde{\Phi}_j \right)\Big\arrowvert_{z=0}.
\eq
Recall from \eqref{equ:modifie} that each $\widetilde\Phi_j$ satisfies the elliptic problem
\[
(\partial_z^2 + \alpha_j \Delta_x + \beta_j\cdot \nabla_x \partial_z - \gamma_j \partial_z) \widetilde\Phi_j = 0,
\]
where $\alpha_j,~\beta_j,~\gamma_j$ are defined as in \eqref{alpha} with $\eta$ replaced by $\eta_j$.\\
 Set $\widetilde{\Phi}=\widetilde{\Phi}_1-\widetilde{\Phi}_2$ then
\bq\label{e:Phi}
\left(\partial_z^2+\alpha_1\Delta_x+\beta_1\cdot\nabla\partial_z-\gamma_1\partial_z\right)\widetilde{\Phi}=F
\eq
with
\[
F=\left\{(\alpha_2-\alpha_1)\Delta_x+(\beta_2-\beta_1)\cdot\nabla\partial_z-(\gamma_2-\gamma_1)\partial_z \right\}\widetilde{\Phi}_2.
\]
We fix $z_0\in (-1, 0)$ and set $I_0=(z_0, 0)$,~$I=(-1, 0)$. We first prove that
\bq\label{contractDN0}
\|w\nabla_{x,z}\widetilde \Phi\|_{X^{s-\mezl}_{ul}(I_0)}\le \mathcal{F}(\|(\eta_1, \eta_2)\|_{H^{s+\frac{1}{2}}_{ul}\times H^{s+\frac{1}{2}}_{ul}})\|w(\eta_1-\eta_2)\|_{H^{s-\frac{1}{2}}_{ul}}\|f\|_{H^{s}_{ul}}.
\eq
To prove the preceding estimate, we claim that
\bq
\|wF\|_{L^2(I, H^{s-2})_{ul}}\le \mathcal{F}(\|(\eta_1, \eta_2)\|_{H^{s+\frac{1}{2}}_{ul}\times H^{s+\frac{1}{2}}_{ul}})\|w(\eta_1-\eta_2)\|_{H^{s-\frac{1}{2}}_{ul}}\|f\|_{H^{s}_{ul}}.
\label{contractDN1}
\eq
Indeed, the $H^s_{ul}$ version of the product rule \eqref{product:rule} (see Proposition $7.3$ $(i)$, \cite{ABZ2}) applied with  $s_0=s-2,~s_1=s-1,~s_2=s-2$ yields
\bq\label{claim:F1}
\begin{aligned}
\|wF\|_{L^2(I, H^{s-2})_{ul}}&\les  \Vert w(\alpha_2-\alpha_1)\Vert_{L^2(I, H^{s-1})_{ul}}\Vert \Delta_x\widetilde{\Phi}_2\Vert_{L^\infty(I, H^{s-2})_{ul}}\\
&\quad +\Vert w(\beta_2-\beta_1)\Vert_{L^2(I, H^{s-1})_{ul}}\Vert \nabla\partial_z\widetilde{\Phi}_2\Vert_{L^\infty(I, H^{s-2})_{ul}}\\
&\quad +\Vert w(\gamma_2-\gamma_1)\Vert_{L^2(I, H^{s-2})_{ul}}\Vert \partial_z\widetilde{\Phi}_2\Vert_{L^\infty(I, H^{s-1})_{ul}}.
\end{aligned}
\eq
On the other hand, applying Proposition \ref{L2smooth} for the $\mez$-smoothing effect of the Poisson kernel in weighted spaces gives 
\begin{multline}\label{claim:F2}
\Vert w(\alpha_2-\alpha_1)\Vert_{L^2(I, H^{s-1})_{ul}}+\Vert w(\beta_2-\beta_1)\Vert_{L^2(I, H^{s-1})_{ul}}+\Vert w(\gamma_2-\gamma_1)\Vert_{L^2(I, H^{s-2})_{ul}}\\ \le
\mathcal{F}(\|(\eta_1, \eta_2)\|_{H^{s+\frac{1}{2}}_{ul}\times H^{s+\frac{1}{2}}_{ul}})\|w(\eta_1-\eta_2)\|_{H^{s-\frac{1}{2}}_{ul}}.
\end{multline}
Remark that in Corollary \ref{coroetape1}  one can replace the assumption $z_0\in (-1, 0)$ by $z_0\in J=(-2, 0)$ because on $(-2, 1]$ the diffeomorphism $\rho_j$ satisfies the same bounds as the diffeomorphism defined in \eqref{diffeo} does (again, this is true because $\widetilde\eta\in C^\infty_b(\xR^d)$). This remark applied with $\sigma=s-1$ and $w\equiv 1$ leads to 
\bq\label{claim:F3}
\Vert \nabla_{x,z}\widetilde\Phi_2\Vert_{L^\infty(I, H^{s-1}_{ul})}\le \mathcal{F}(\|\eta_2\|_{H^{s+\frac{1}{2}}_{ul}})\|f\|_{H^s_{ul}}.
\eq
Putting together \eqref{claim:F1}, \eqref{claim:F2}, \eqref{claim:F3} one obtains the claim \eqref{contractDN1}.\\
Since $\widetilde\Phi|_{z=0}=0$, with the aid of Theorem \ref{regell} (which is applicable since $\rho_{1,j}$ and $\rho$ in \eqref{diffeo} have exactly the same form),  the proof of (\ref{contractDN0}) now reduces to estimate $\|w\nabla_{x,z}\widetilde \Phi\|_{X^{-\mez}_{ul}(I)}$. This is a consequence of Theorem \ref{diffPhi:low} applied with $\psi_1=\psi_2=f$ (and the fact that $I\subset J$). \\
In view of \eqref{DNformula}, to obtain the bound for $w[G(\eta_1)f-G(\eta_2)]f$ in $H^{s-\tdm}_{ul}$ it is necessary to bound  $\|w\nabla_{x,z}\widetilde\Phi\|_{H^{s-\frac{3}{2}}_{ul}}$ at $z=0$. More precisely, we shall prove that 
\[
\|w\nabla_{x,z}\widetilde\Phi\arrowvert_{z=0}\|_{H^{s-\tdm}_{ul}}\le \mathcal{F}(\|(\eta_1, \eta_2)\|_{H^{s+\frac{1}{2}}_{ul}\times H^{s+\frac{1}{2}}_{ul}})\|w(\eta_1-\eta_2)\|_{H^{s-\frac{1}{2}}_{ul}}\|f\|_{H^s_{ul}}.
\]
To this end,  we use the  argument in step $6$  of the proof of Theorem \ref{diffPhi:low}. By virtue of Lemma \ref{interpo} and \eqref{contractDN0}, 
 we then only need to estimate $\|w\partial_z^2\widetilde\Phi\|_{L^2(I_0, H^{s-2})_{ul}}$, which in turn follows by using equation (\ref{e:Phi}) together with the estimate \eqref{contractDN1} and the product rule \eqref{product:w}. Finally, using (\ref{DNformula}) and the product rule \eqref{product:w} once again, we conclude the proof of Theorem \ref{lip:G}.
\end{proof}
\begin{rema}
Theorem \ref{diffPhi:low} is also a crucial ingredient in proving contraction of the remainder $R$ appearing in the reformulation of water waves system-equation $(4.4)$ in Proposition $4.2$, \cite{ABZ2}. Notice that our estimate \eqref{dphi1/2} is sufficient for this purpose because 
\[
\lA w(\psi_1-\psi_2)\rA_{H^{\mez}_{ul}}\le \lA w(\psi_1-\psi_2)\rA_{H^{s-1}_{ul}}
\]
owing to the fact that $s>1+\frac{d}{2}$.
\end{rema}
\section{Proof of the main results}
\subsection{Proof of Theorem \ref{main}}
 The contraction estimate in Theorem \ref{main} was proved  in \cite{ABZ1} (see Theorem $5.1$) for classical Sobolev spaces and then in \cite{ABZ2} for Kato's spaces. Both use the following scheme:
\begin{enumerate}[1)]
\item  study the Dirichlet-Neumann operator: bound estimates and paralinearization 
\item contraction estimate for the Dirichlet-Neumann operator 
\item paralinearization of the difference equations (after reformulation)
\item estimates for the good unknown
\item back to the original unknowns. 
 \end{enumerate}
\hk Here, we shall follow the same scheme as above. The first two items are the real new points in our problem and have been studied in Section \ref{descriptDN} and \ref{contractDN}. For the last three items we need a para-differential machinery in Kato's spaces with weights and this is established  in Appendix \ref{paramachinery}. The key point in this machinery is that: whenever we estimate $S(u, v)$ in weighted norms, where $S$ is an operator of two variables, we are always able to shift the weight to $u$ or $v$. Having this in hand, items $3), 4), 5)$ follow line by line those in \cite{ABZ2} and \cite{ABZ1}: one only need to replace $\Vert\cdot \Vert_{H^\sigma_{ul}}$ or $\Vert\cdot \Vert_{H^\sigma}$ by $\Vert w\cdot\Vert_{H^\sigma_{ul}}$ in the relevant estimates ($w$ is the weight). We conclude the proof.
\subsection{Proof of Corollary \ref{coro:main}}We need to show how \eqref{eq.lipschitz} implies \eqref{eq.lipschitz1}. To this end, it suffices to prove that there exist $0<T_1\le T$ and $N>0$ (both are independent of $U_j$) such that
\bq\label{est:Mj}
\lA U_j\rA_{L^{\infty}([0, T_1], \mathcal{H}_{ul}^s)}\le N, ~j=1,2.
\eq
Define the Sobolev norms of the solutions as 
\[
M^j_\sigma(\tau)=\lA U_j\rA_{L^{\infty}([0, \tau], \mathcal{H}_{ul}^\sigma)}, ~ \forall \tau\in [0, T],~\forall j\ge 0.
\]
Let us recall the a priori estimate derived in \cite{ABZ2}: for any $1+{\frac{d}{2}}<\sigma\le s$ and $\mathcal{T}>0$ one can find a non decreasing function $\cF:\xR^+\to \xR^+$ such that
\bq\label{apriori}
M^j_s(\mathcal{T})\le \cF\big(M^j_{\sigma}(0) +\mathcal{T}M^j_{\sigma}(\mathcal{T})\big)\big(M^j_s(0) +\mathcal{T}M^j_s(\mathcal{T}) \big),\quad\forall j\ge 0.
\eq
Fix $s_0\in \left(1+\frac{d}{2}, s\right)$. Since each $U_j$ is a solution to the gravity waters system in $C^0([0, T], \mathcal{H}_{ul}^{s_0})$, the estimate \eqref{apriori} gives for some non decreasing $\mathcal{F}_1:\xR^+\to \xR^+$ (independent of $U$)
\[
M^j_{s_0}(\tau)\le \mathcal{F}_1(M^j_{s_0}(0) +\tau M^j_{s_0}(\tau)),~ \forall \tau\in [0, T],~\forall j\ge 0.
\]
According to Theorem \ref{theo:ABZ}, $U_j$ is continuous in time with value in $\mathcal{H}_{ul}^{s_0}$ since $s_0<s$. Consequently, $M^j_{s_0}(\tau)$ is continuous in $\tau$. In addition, $M^j_{s_0}(0)$ can be bounded by some constant independent of $j$, say $A$. The standard argument then gives the existence of $T_0\in (0, T]$ and $N>0$, both are independent of $U_j$ (but depend on $A$), such that 
\bq\label{Ms_0}
M^j_{s_0}(\tau)\le N,~\forall \tau\in [0, T_0],\quad\forall j\ge 0.
\eq
Applying again the estimate \eqref{apriori} with $\sigma=s_0<s$ we get for some non-decreasing function $\mathcal{F}:\xR^+\to \xR^+$ (independent of $U$)
\[
M^j_{s}(\tau)\le \mathcal{F}(M^j_{s_0}(0) +T_0M^j_{s_0}(\tau))\left(M^j_s(0)+\tau M^j_s(\tau) \right), ~\forall \tau\in [0, T_0],~\forall j\ge 0.
\]
By \eqref{Ms_0}, this implies
\[
M^j_{s}(\tau)\le \mathcal{F}(N(1+T_0))\left(M^j_s(0)+\tau M^j_s(\tau) \right),~ \forall \tau\in [0, T_0].
\]
Now, let $T_1\in (0, T_0]$ satisfying
\[
T_1\mathcal{F}(N(1+T_0))\le \mez 
\]
one deduces
\[
M^j_s(T_1)\le 2\mathcal{F}(N(1+T_0))M^j_s(0),~\forall j\ge 0
\]
which concludes the proof.
 \appendix 
\section{Paradifferential calculus in Kato's spaces with weights}
\label{paramachinery}
In this section, we adapt the paradifferential machinery for the presence of weights which can be of independent interest. The proofs of these results follow those in \cite{ABZ2} but we need to take some care (so we only present the proof whenever it is necessary). We recall first various spaces which will be used in the sequel.
\begin{defi}\label{XY}
Let $ p\in [1,+\infty],$ $J =(z_0,0), z_0<0$ and $\sigma \in \xR$. \\
1. The space $L^p(J, H^\sigma(\xR^d))_{ul}$ is defined as the space of measurable functions 
$u$ from $J_z\times \xR^d_x $ to $ \xC$ such that
$$
\Vert u \Vert_{L^p(J, H^\sigma(\xR^d))_{ul}}:= \sup_{q\in \xZ^d}
\Vert \chi_q u \Vert_{L^p(J, H^\sigma(\xR^d))} <+\infty.
$$  
2. We set
  \begin{equation*}\label{X,Y}
\begin{aligned}
  X^\sigma_{ul}(J) &= L^\infty(J, H^\sigma(\xR^d))_{ul}\cap L^2(J, H^{\sigma+\mez}(\xR^d))_{ul}\\
  Y^\sigma_{ul} (J) & = L^1(J, H^\sigma(\xR^d))_{ul} +  L^2(J, H^{\sigma - \mez}(\xR^d))_{ul}
  \end{aligned}
\end{equation*}
endowed with their natural norms.\\
The same spaces without subscript "ul" are defined for classical Sobolev spaces.
\end{defi}
Notice that $L^\infty(J, H^\sigma(\xR^d))_{ul} = L^\infty(J, H^\sigma_{ul}(\xR^d))$.
\begin{nota} 
 For $t\in \xR$, we denote $\lceil t\rceil$ the smallest integer strictly greater than or equal $t$. 
\end{nota}
 \subsection{Weighted continuity of pseudo-differential operators}
In \cite{ABZ2}, the authors proved the continuity of pseudo-differential operators on the framework of $L^2$ based uniformly local Sobolev spaces. Here, we perform similar results with the presence of weights in classes $\mathcal{W}_{po}(\varrho),~\varrho>0$ (see Definition \ref{classW}), which are composed of functions that are at most polynomial growth. For the sake of clarity, let us redefine this class.
\begin{defi}\label{defi:Wp}
For every $\varrho \ge 0$, we define $\mathcal{W}_{po}(\varrho)$ to be the class of all functions $w:\xR^d\to (0, \infty)$ satisfying the following conditions:
\begin{enumerate}
\item[(i)]  $r_1:=\frac{\nabla w^{-1}}{w^{-1}}$ and $r'_1:=\frac{\nabla w}{w}$ belong to $C^\infty_b(\xR^d)$, where $w^{-1}(x)=1/w(x)$,\\
\item [(ii)] for any $C_1>0$, there exists $C_2>0$ such that for any $x_0\in \xR^d$, there hold 
\[
w(x)\le C_2w(x_0)\quad\text{and}\quad w(x)^{-1}\le C_2w(x_0)^{-1}\quad\forall x\in \xR^d,~|x-x_0|\le C_1,
\]
\item[(iii)] there exists $C>0$ such that for any $x,~y\in \xR^d$ we have $w(x)w^{-1}(y)\le C\langle x-y\rangle^\varrho$.\\
\end{enumerate}
\end{defi}
\begin{rema}\label{rema:Wpo}
1. For all $\ld\in \xR$ and $C>1$, the functions $\langle x\rangle^\ld,~\ln(C+|x|^2)$ belong to $\mathcal{W}_{po}(|\ld|)$ and $\mathcal{W}_{po}(1)$ respectively. For every $t\in \xR\setminus\{0\}$, the function $e^{t\langle x\rangle}$ does not belong to any class $\mathcal{W}_{pol}(\varrho)$.\\
2. If $w_j\in \mathcal{W}_{po}(\varrho_j),~j=1,2$ then $w_1w_2\in \mathcal{W}_{po}(\varrho_1+\varrho_2)$.\\
3. If $w\in \mathcal{W}_{po}(\varrho)$ and $w>0$ then $w^\ld\in \mathcal{W}_{po}(\varrho|\ld|)$ for any $\ld\in \xR$.\\
4. Condition $(iii)$ in Definition \ref{defi:Wp} is equivalent to \\
$(iii')$ there exist $M_1,~M_2>0$ and $\ld_1,~\ld_2\in \xR$ such that
\[M_1\langle x\rangle^{\ld_1}\le w(x)\le M_2\langle x\rangle^{\ld_2},\quad\forall x\in \xR^d.
\]
Although $(iii')$ seems to be more natural than $(iii)$, in the following proofs, condition $(iii)$ is more convenient. 
\end{rema}
 We denote by $S^m_{1,0}$ the standard H\"ormander's class  of symbols $p\in C^\infty(\xR^d \times \xR^d)$ satisfying 
$$  \vert D_\xi^\alpha D_x^\beta p(x,\xi)\vert \leq C_{\alpha,\beta}(1+\vert \xi \vert)^{m-\vert \alpha \vert}\quad \forall \alpha, \beta \in \xN^d, \forall (x,\xi) \in \xR^d \times \xR^d .$$
\begin{prop}\label{pseudo}
Let $P$ be a pseudo-differential operator whose symbol $p$ belongs to $S^{m}_{1,0}$ and let $w$ be a weight in $\mathcal{W}_{po}(\varrho),~\varrho\ge 0$. Then for any $s\in \xR$, there exists $C>0$ such that 
\[\lA wPu\rA_{H^s_{ul}}\le C\lA wu\rA_{H^{s+m}_{ul}},
\]
provided that the right hand side is finite.
\label{pseudo1}
\end{prop}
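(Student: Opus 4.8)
The plan is to reduce the statement to a uniform family of estimates on localized pieces, following the strategy of the pseudodifferential continuity proof in \cite{ABZ2} but inserting the weight. Write $\chi_q = \chi_q \widetilde\chi_q$ as in Notation \ref{widetildechi}, and for fixed $k \in \xZ^d$ decompose
\[
w\chi_k P u = \sum_{q\in\xZ^d} w\chi_k P (\chi_q \widetilde\chi_q w^{-1})(w u).
\]
So it suffices to bound, uniformly in $k$, the operator norm of $A_{k,q} := w\chi_k P \chi_q w^{-1}$ from $H^{s+m}(\xR^d)$ to $H^s(\xR^d)$ and to show the resulting sum over $q$ is summable with a bound independent of $k$. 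Split the sum into the ``near'' region $|k-q|\le M$ and the ``far'' region $|k-q|>M$ for a fixed large $M$.

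For the near region, there are only $O(1)$ terms; on the support of $\chi_k(x)\chi_q(y)$ one has $|x-y|\le C$, so property (ii) of the weight gives $w(x) \sim w(q)^{-1}\cdot(w(q)w(x)) \le C w(q)$ and similarly $w(y)^{-1}\le C w(q)$, hence $w(x)w(y)^{-1} \le C$ uniformly. Thus $w\chi_k$ and $\chi_q w^{-1}$ act as multiplication by bounded (with bounded derivatives, thanks to property (i)) symbols, and the standard $H^{s+m}\to H^s$ continuity of $P\in S^m_{1,0}$ together with the symbolic calculus gives a bound uniform in $k,q$. For the far region, by choosing $M$ large enough we have $|x-y|\ge \delta|k-q|$ on the support of $\chi_k(x)\chi_q(y)$; then the kernel $K(x,y)=(2\pi)^{-d}\int e^{i(x-y)\xi}p(x,\xi)\,d\xi$ of $P$, after integrating by parts against powers of $(x-y)$ and using the symbol estimates $|\partial_\xi^\alpha p|\le C_\alpha\langle\xi\rangle^{m-|\alpha|}$, obeys $|\partial_{x,y}^\gamma K(x,y)|\le C_{N,\gamma}\langle x-y\rangle^{-N}$ for every $N$, on the set $|x-y|\ge \delta$. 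Hence the kernel $H_{k,q}$ of $A_{k,q}$ restricted to the far region, together with its $x,y$-derivatives up to the order needed for $H^s$ boundedness, is bounded pointwise by $C_N w(x)w(y)^{-1}\langle k-q\rangle^{-N}\chi_k(x)\widetilde\chi_q(y)$; invoking property (iii), $w(x)w(y)^{-1}\le C\langle x-y\rangle^\varrho \le C\langle k-q\rangle^\varrho$ on that support (using also (ii) to pass from $x,y$ to $k,q$). A Schur-type / Young's inequality argument then yields $\|A_{k,q}\|_{H^{s+m}\to H^s}\le C_N\langle k-q\rangle^{\varrho-N}$, and choosing $N>\varrho+d$ makes $\sum_{|k-q|>M}\langle k-q\rangle^{\varrho-N}$ finite and independent of $k$.

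Combining the two regions, $\|w\chi_k Pu\|_{H^s} \le C\sup_q \|\widetilde\chi_q w u\|_{H^{s+m}}\le C\|wu\|_{H^{s+m}_{ul}}$ uniformly in $k$ (here one uses again property (ii) to replace $\|\widetilde\chi_q wu\|$ by the $\mathcal{H}$-type norm, and that $w\in C^\infty$ locally with controlled seminorms so the cutoff $\widetilde\chi_q$ costs only a constant). Taking the supremum over $k$ gives the claimed bound. The main obstacle — and the only place where the restriction to $\mathcal{W}_{po}(\varrho)$ is essential — is the far-region estimate: we need the polynomial growth bound $w(x)w(y)^{-1}\le C\langle x-y\rangle^\varrho$ to beat it against the (merely) polynomial off-diagonal decay $\langle x-y\rangle^{-N}$ of the kernel; an exponential weight would not be controllable this way since the kernel of a general operator in $S^m_{1,0}$ decays only polynomially off the diagonal. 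A secondary technical point is bookkeeping the number of $x,y$-derivatives of the kernel required to conclude $H^{s+m}\to H^s$ boundedness for a given real $s$ (one passes through integer orders and interpolates, or reduces to $s=0$ after composing with $\langle D_x\rangle^{\pm}$, noting $\lDx{\sigma}$ commutes with $P$ up to lower order and preserves the weight classes by Remark \ref{rema:Wpo}), but this is routine once the kernel decay is in hand.
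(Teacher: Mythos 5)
Your argument is correct and mirrors the paper's proof: both decompose $w\chi_k Pu$ into a near‐diagonal sum (handled by the classical $H^{s+m}\to H^s$ continuity of $P$ plus properties (i)–(iii) of $w$) and a far‐off‐diagonal sum (handled by integrating by parts in the kernel and using the polynomial bound $w(k)w(q)^{-1}\le C\langle k-q\rangle^\varrho$ to beat the $\langle k-q\rangle^{-N}$ decay). The paper is a bit more explicit than your "Schur-type / interpolation" remark about turning kernel decay into the Sobolev‐to‐Sobolev bound: it picks integers $n_0\ge s$ and $n_1\ge -(s+m)$, controls $D_x^\alpha D_y^\beta K$ for $|\alpha|\le n_0$, $|\beta|\le n_1$, and then bounds $\|D_x^\alpha B_{k,q}\|_{L^2}$ (using the compact support of $\chi_k$ and the duality pairing of $D_x^\alpha K(x,\cdot)\in H^{-(s+m)}$ against $\chi_q u\in H^{s+m}$), which gives the $H^s$ bound on $B_{k,q}$ directly rather than an operator norm estimate — but this is exactly the bookkeeping you flagged as routine, and your overall structure matches the paper's.
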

\begin{proof}
We write\bq
w\chi_kPu=\sum_{|k-q|\le 2}w\chi_kP\chi_qu+\sum_{|k-q|>3}w\chi_kP\chi_qu:=A+\sum_{|k-q|>3}B_{k,q}.
\eq
Since $\chi_qu=(\chi_qw(x)u)(\widetilde\chi_qw(x)^{-1})\in H^{s+m}(\xR^d)$, we have by properties $(i),~(ii),~(iii)$ of the weight $w$, the product rule \eqref{product:rule} and the classical pseudo-differential calculus that
\bqa
\|w\chi_kP\chi_qu\|_{H^{s}}&\le & Cw(k)\|P\chi_qu\|_{H^{s}}\\
&\le &  Cw(k)\|\chi_qu\|_{H^{s+m}}\\
&\le & Cw(k)w(q)^{-1}\|w\chi_qu\|_{H^{s+m}}\\
&\le &C \langle k-q \rangle^{\varrho}\|w\chi_qu\|_{H^{s+m}}\\
&\le & C \|wu\|_{H^{s+m}_{ul}},
\eqa
 provided $|k-q|\le 2$. Thus, 
\bq\label{pseudo:near}
A\le C\|wu\|_{H^{s+m}_{ul}}.
\eq
To bound  the second part, we fix $n_0\in \xN$, $n_0\ge s$. We shall prove 
\bq\label{DxB}
\|D^{\alpha}_xB_{k,q}\|_{L^2(\xR^d)}\le \frac{C}{\langle k-q\rangle^{d+1}}\|w u\|_{H^{s+m}_{ul}}, \quad |\alpha|\le n_0
\eq
which implies the desired estimate for $\sum_{|k-q|>3}B_{k,q}$.\\
By the presence of $\chi_k$, $\|D^{\alpha}_xB_{k,q}\|_{L^2(\xR^d)}\le C\|D^{\alpha}_xB_{k,q}\|_{L^{\infty}(\xR^d)}$. We have 
\[
D^{\alpha}_xB_{k,q}(x)=\langle D^{\alpha}_xK(x,\cdot), \chi_qu\rangle
\]
with 
\bq
K(x, y)=(2\pi)^{-d}\int_{\xR^d}e^{i(x-y)\xi}p(x, \xi)d\xi\chi_k(x)w(x)\widetilde{\chi}_q(y).
\label{K}
\eq
Fix $n_1\in \xN, n_1\ge -(s+m)$ and $\beta\in \xN^d, |\beta|\le n_1$. Let $\gamma\in \xN^d$ be such that $|\gamma|=N$ with 
\bq\label{choose:N:pseudo}
N\ge \max (m+n_0+n_1+ d+1, \varrho+d+1).
\eq
 Multiplying $D_x^{\alpha}D_y^{\beta}K(x,y)$  by $(x-y)^{\gamma}$ and integrating by parts with a remark that $|x-y|\ge \delta |k-q|$ (for some $\delta>0$) on the support of $\chi_k(x)w(x)\widetilde{\chi}_q(y)$, we obtain
\[
\la D_x^{\alpha}D_y^{\beta}K(x,y)\ra\le \frac{C_{\beta, d, \ld}}{\langle k-q \rangle^{N}}w(k)\sum_{|\beta_1|\le |\beta|}\la \partial^{\beta_1}\widetilde{\chi}_q(y)\ra.
\]
It follows that 
\bq
\begin{aligned}
|D^{\alpha}_xB_{k,q}(x)|&\le  \|D^{\alpha}_xK(x,\cdot)\|_{H^{-(s+m)}}\|\chi_qu\|_{H^{s+m}}\\
&\le  \frac{C}{\langle k-q\rangle^{N}}w(k)\|\chi_qu\|_{H^{s+m}}\\
&\le  \frac{C}{\langle k-q \rangle^{N}}w(k)w(q)^{-1}\|\chi_qwu\|_{H^{s+m}}\\
&\le  \frac{C}{\langle k-q \rangle^{N}}\langle k-q \rangle^{\varrho}\|w u\|_{H^{s+m}_{ul}}\\
&\le  \frac{C}{\langle k-q \rangle^{d+1}}\|w u\|_{H^{s+m}_{ul}}
\end{aligned}
\eq
 which proves \eqref{DxB}.
\end{proof}
In a particular case the proof above gives the  following more precise result.
\begin{prop}\label{pseudoh}
Let $m \in \xR$, and $w\in \mathcal{W}_{po}(\varrho),~\varrho\ge0$. Let $h(\xi)= \widetilde{h}\big (\frac{\xi}{\vert \xi \vert}\big)\vert \xi \vert^m \psi(\xi)$ with $\widetilde{h} \in C^\infty(\xS^{d-1})$, and 
\bq\label{cutoff:psi}
 \psi \in C ^\infty(\xR^d),\quad \psi(\xi) = 1~\text{if}~\vert \xi \vert \geq 1,\quad \psi(\xi) = 0~\text{if}~\vert \xi \vert \leq \mez. 
\eq
Then for every $s \in \xR$ and 
\bq\label{choose:r}
r>\lceil m\rceil+\lceil s\rceil+\lceil -(m+s)\rceil+\lceil \varrho\rceil+\frac{3d}{2}+1,
\eq
there exists a constant $C$ such that
$$\Vert  wh(D_x) u \Vert_{H^s_{ul}(\xR^d)} \leq C \Vert \widetilde{h} \Vert_{H^{r}(\xS^{d-1})} \Vert  wu \Vert_{H^{s+m}_{ul}(\xR^d)},$$
provided that the right-hand side is finite.
\end{prop}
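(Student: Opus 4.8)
The plan is to retrace the proof of Proposition \ref{pseudo}, keeping explicit track of how its constant depends on the symbol, and then to convert the angular regularity of $\widetilde h$ into the symbol bounds this requires. First I would record pointwise estimates on the $\xi$-derivatives of $h$: since $\widetilde h(\xi/|\xi|)$ is positively homogeneous of degree $0$ away from the origin and $\psi(\xi)|\xi|^m\in S^m_{1,0}$ with universal seminorms, the chain rule together with homogeneity gives, for every multi-index $\gamma$,
\[
\la D_\xi^\gamma h(\xi)\ra\le C_\gamma\,\lA\widetilde h\rA_{C^{|\gamma|}(\xS^{d-1})}\,(1+|\xi|)^{m-|\gamma|},
\]
because each differentiation either falls on $|\xi|^m\psi(\xi)$ (harmless, and confined to the annulus $\mez\le|\xi|\le1$ when it hits $\psi$) or produces a factor $O(|\xi|^{-1})$ multiplying an angular derivative of $\widetilde h$. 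By the Sobolev embedding $H^r(\xS^{d-1})\hookrightarrow C^k(\xS^{d-1})$, which holds once $r>k+\frac{d-1}{2}$, the hypothesis \eqref{choose:r} guarantees $\lA\widetilde h\rA_{C^k(\xS^{d-1})}\le C\lA\widetilde h\rA_{H^r(\xS^{d-1})}$ for every order $k$ that occurs below. Note also that $h$ is independent of $x$, so the kernel $K_0=\cF^{-1}h$ is translation invariant.

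Next I would split, as in the proof of Proposition \ref{pseudo},
\[
w\chi_k h(D_x)u=\sum_{|k-q|\le2}w\chi_k h(D_x)\chi_q u+\sum_{|k-q|>3}w\chi_k h(D_x)\chi_q u=:A+\sum_{|k-q|>3}B_{k,q}.
\]
For the near-diagonal part $A$, the boundedness $h(D_x):H^{s+m}\to H^s$ holds with operator norm $\lA(1+|\xi|)^{-m}h(\xi)\rA_{L^\infty}\le C\lA\widetilde h\rA_{L^\infty(\xS^{d-1})}\le C\lA\widetilde h\rA_{H^r(\xS^{d-1})}$; combining this with properties $(i),(ii),(iii)$ of $w\in\mathcal{W}_{po}(\varrho)$ and the product rule \eqref{product:rule} exactly as in \eqref{pseudo:near} yields $\|A\|_{H^s}\le C\lA\widetilde h\rA_{H^r(\xS^{d-1})}\|wu\|_{H^{s+m}_{ul}}$. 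For the far-diagonal terms I would fix $n_0=\lceil s\rceil$, $n_1=\lceil-(m+s)\rceil$, and choose $N=|\gamma|$ as in \eqref{choose:N:pseudo}, that is $N\ge\max(m+n_0+n_1+d+1,\ \varrho+d+1)$. The kernel of $B_{k,q}$ is $K_0(x-y)\chi_k(x)w(x)\widetilde\chi_q(y)$, and on the support of $\chi_k(x)\widetilde\chi_q(y)$ with $|k-q|>3$ one has $|x-y|\ge\delta|k-q|$. Applying $D_x^\alpha D_y^\beta$ with $|\alpha|\le n_0$, $|\beta|\le n_1$, multiplying by $(x-y)^\gamma$, and integrating by parts in $\xi$ reduces (after the Leibniz rule) to bounding $\int_{\xR^d}\la D_\xi^\gamma(\xi^{\alpha+\beta}h(\xi))\ra\,d\xi$, which by the symbol estimate above is finite because $N>m+|\alpha|+|\beta|+d$, and is bounded by $C\lA\widetilde h\rA_{H^r(\xS^{d-1})}$. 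Proceeding verbatim as in the proof of Proposition \ref{pseudo} — estimating $|D_x^\alpha B_{k,q}(x)|$ through $\|D_x^\alpha K(x,\cdot)\|_{H^{-(s+m)}}\|\chi_q u\|_{H^{s+m}}$ and then using $w(k)w(q)^{-1}\le C\langle k-q\rangle^\varrho$ — one obtains
\[
\|D_x^\alpha B_{k,q}\|_{L^2(\xR^d)}\le\frac{C}{\langle k-q\rangle^{d+1}}\,\lA\widetilde h\rA_{H^r(\xS^{d-1})}\,\|wu\|_{H^{s+m}_{ul}},\qquad|\alpha|\le n_0,
\]
and, since $n_0\ge s$, summing over $q$ controls the $H^s$ norm of $\sum_{|k-q|>3}B_{k,q}$ and finishes the argument.

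The main obstacle here is bookkeeping rather than conceptual: one must carefully count how many $\xi$-derivatives of $h$ are actually needed by the two sums, express those through angular derivatives of $\widetilde h$ via the chain rule and homogeneity, and verify that the order $N$ forced by \eqref{choose:N:pseudo} together with the sphere embedding $H^r(\xS^{d-1})\hookrightarrow C^N(\xS^{d-1})$ is exactly what \eqref{choose:r} provides — the $\frac{3d}{2}$ there being $d$ from integrability of the kernel symbol near infinity plus $\frac{d-1}{2}$ from the embedding on $\xS^{d-1}$, with the ceilings absorbing the non-integer parts and $\lceil\varrho\rceil$ the loss coming from the weight.
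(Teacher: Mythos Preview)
Your proposal is correct and follows precisely the approach indicated in the paper, which merely notes that the result comes from retracing the proof of Proposition~\ref{pseudo} while tracking the symbol seminorms, with the threshold \eqref{choose:r} arising from the choice of $N$ in \eqref{choose:N:pseudo} together with the Sobolev embedding $H^r(\xS^{d-1})\hookrightarrow C^N(\xS^{d-1})$. Your bookkeeping of the near/far splitting, the kernel estimate via integration by parts in $\xi$, and the counting of angular derivatives needed for $\widetilde h$ is exactly what the paper has in mind.
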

Remark that the condition on $r$ above comes from the choice of $N$ in \eqref{choose:N:pseudo}, plus ${\frac{d}{2}}+\eps$ derivatives from Sobolev embeddings. Next,  tracking the proof of Lemma 7.10 in \cite{ABZ2} and Proposition \ref{pseudo} above, we easily obtain the following proposition. 
 \begin{prop}\label{L2smooth}
Let $r>0$,  and  $m\in\xR$ and $w\in \mathcal{W}_{po}(\varrho),~\varrho\ge0$ . Let $p\in S^r_{1,0}(\xR^d), a\in S^m_{1,0}(\xR^d)$ be two symbols with constant coefficients. We assume that there exists $c_0>0$ such that for all $\xi \in \xR^d$ we have $p(\xi)  \geq c_0 \vert \xi \vert^r$. Then for all $s\in \xR$ and $I = [0,T],$ one can find a positive constant $C$ such that
\begin{equation}\label{smoothing}
 \Vert we^{-tp(D)}a(D)u\Vert_{L^\infty(I, H^s)_{ul}} + \Vert w e^{-tp(D)}a(D)u\Vert_{L^2(I, H^{s+\frac{r}{2}})_{ul}} \leq C \Vert wu \Vert_{H^{s+m}_{ul}},
 \end{equation}
provided that the right-hand side is finite.
\end{prop}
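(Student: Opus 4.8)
The plan is to combine the unweighted classical-Sobolev version of the statement --- which is Lemma~7.10 in \cite{ABZ2}, expressing the $\frac{r}{2}$-smoothing effect of the parabolic semigroup $e^{-tp(D)}$ --- with the uniformly-local partition-of-unity and kernel-decay argument already carried out in the proof of Proposition~\ref{pseudo}. The first thing I would do is observe that the frozen-time symbol $q_t(\xi):=e^{-tp(\xi)}a(\xi)$ belongs to $S^m_{1,0}$ \emph{uniformly in $t\ge 0$}: indeed $\partial_\xi^\gamma e^{-tp(\xi)}$ is $e^{-tp(\xi)}$ times a polynomial in the quantities $t\,\partial_\xi^\alpha p(\xi)$, each monomial of which is bounded by $C(t\langle\xi\rangle^r)^{j}\langle\xi\rangle^{-|\gamma|}$, and $(t\langle\xi\rangle^r)^j e^{-tc_0|\xi|^r}$ is bounded on $t\ge 0$ for $|\xi|\ge 1$ thanks to $p(\xi)\ge c_0|\xi|^r$, while $e^{-tp(\xi)}\le 1$ for $|\xi|\le 1$; combined with $a\in S^m_{1,0}$ this gives $|\partial_\xi^\gamma q_t(\xi)|\le C_\gamma\langle\xi\rangle^{m-|\gamma|}$ with $C_\gamma$ independent of $t\in[0,T]$. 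Thus $P_t:=e^{-tp(D)}a(D)$ is, for every fixed $t\in I$, a pseudo-differential operator of order $m$ whose symbol seminorms are uniformly bounded.

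\textbf{Freezing $t$ and reproducing Proposition~\ref{pseudo}.} Fix $t\in I$ and split $w\chi_k P_t u=\sum_{|k-q|\le 2}w\chi_k P_t\chi_q u+\sum_{|k-q|>3}w\chi_k P_t\chi_q u$ exactly as in the proof of Proposition~\ref{pseudo}. For the near-diagonal terms, $w\sim w(k)$ on $\supp\chi_k$, so $\|w\chi_k P_t\chi_q u\|_{H^\sigma}\le Cw(k)\|P_t\chi_q u\|_{H^\sigma}$; here I apply the \emph{unweighted} estimate of Lemma~7.10 in \cite{ABZ2}, which bounds $\sup_{t\in I}\|P_t\chi_q u\|_{H^s}+\|P_t\chi_q u\|_{L^2(I,H^{s+r/2})}$ by $C\|\chi_q u\|_{H^{s+m}}\le Cw(q)^{-1}\|w\chi_q u\|_{H^{s+m}}$, the last step by the product rule since $\widetilde\chi_q w^{-1}\in C^\infty_b(\xR^d)$ with seminorms uniform in $q$. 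As $|k-q|\le 2$ we have $w(k)w(q)^{-1}\le C\langle k-q\rangle^\varrho\le C$, so the near-diagonal part is $\le C\|wu\|_{H^{s+m}_{ul}}$ in both the $L^\infty_t H^s$ and $L^2_t H^{s+r/2}$ norms. For the far-diagonal terms I reproduce the kernel estimate of Proposition~\ref{pseudo} with $B^t_{k,q}(x)=\langle K_t(x,\cdot),\chi_q u\rangle$ and $K_t(x,y)=(2\pi)^{-d}\big(\int_{\xR^d} e^{i(x-y)\xi}q_t(\xi)\,d\xi\big)\chi_k(x)w(x)\widetilde\chi_q(y)$: on $\supp\chi_k(x)\widetilde\chi_q(y)$ one has $|x-y|\ge\delta|k-q|$, and the uniform-in-$t$ symbol bounds of the first step license $N$ integrations by parts, so for any prescribed number $n_0$ of $x$-derivatives $\|D_x^\alpha B^t_{k,q}\|_{L^2}\le C\langle k-q\rangle^{\varrho-N}\|wu\|_{H^{s+m}_{ul}}$ uniformly in $t\in I$. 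Choosing $N$ large (depending on $\varrho,d,s,m,r$) and $n_0\ge s+\frac{r}{2}$ makes the far-diagonal part $\le C\|wu\|_{H^{s+m}_{ul}}$ in $H^{n_0}$, hence in $L^\infty(I,H^{s+r/2})$ and, $|I|$ being finite, in $L^2(I,H^{s+r/2})$ as well. Taking $\sup_{t\in I}$ and the $L^2_t$-norm, then $\sup_{k\in\xZ^d}$, yields the asserted inequality.

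\textbf{Where the work is.} The only genuinely delicate point is the first step --- that the Gaussian-type decay of $e^{-tp(\xi)}$ dominates all the polynomial growth produced by differentiating it, so that $q_t\in S^m_{1,0}$ uniformly in $t\ge 0$, together with the fact that $\int_0^T\langle\xi\rangle^{r}e^{-2tp(\xi)}\,dt\lesssim 1$ uniformly in $\xi$, which is what produces the gain of $\frac{r}{2}$ derivatives in the $L^2_t$-norm. This is exactly the content of Lemma~7.10 in \cite{ABZ2} at the level of classical Sobolev spaces; once it is granted, the passage to weighted uniformly-local spaces is the routine bookkeeping of Proposition~\ref{pseudo}, with no new idea.
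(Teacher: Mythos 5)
Your proposal is correct and is essentially the proof the paper intends, which it gives only as a pointer: ``tracking the proof of Lemma~7.10 in \cite{ABZ2} and Proposition~\ref{pseudo} above.'' Your two supporting observations---that $e^{-tp(\xi)}a(\xi)$ lies in $S^m_{1,0}$ with seminorms bounded uniformly in $t\in[0,T]$, and that the far-off-diagonal contribution gains arbitrarily many derivatives from the kernel decay so that only the near-diagonal part requires the genuine $\frac{r}{2}$-smoothing furnished by Lemma~7.10---are exactly what is needed to run the localization template of Proposition~\ref{pseudo}.
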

\subsection{Para-differential calculus with weights}

Assuming the theory of para-differential calculus for classical Sobolev spaces  (see \cite{MePise}) and for uniformly local Sobolev spaces (see \cite{ABZ2}), we present in this section such a theory with the presence of weights.\\
Given $m\in \xR,~\rho \ge 0$ we denote  by $\Gamma_\rho^m(\xR^d)$ the class of symbols of order $m$ and by $T_a$ the associated para-differential operator as in Definition $7.15$, \cite{ABZ2}. In particular, $\dot{\Gamma}^m_\rho(\xR^d)$ denotes the subspace of $\Gamma^m_\rho(\xR^d)$ which consists of symbols $a(x,\xi)$  homogeneous of degree $m$ with respect to $\xi$.\\
To deal with the weights in the class $\mathcal{W}_{po}(\varrho)$, for any symbol $a\in \Gamma^m_{\rho}$ and any real number $s$, let us define the semi-norm
\begin{equation}\label{seminorm}
M^m_\rho(a, s)_{\varrho}= \sup_{\vert \alpha \vert \leq I(d, m, s)_\varrho}\sup_{\vert \xi \vert \geq \mez} \Vert (1+ \vert \xi \vert)^{ \vert \alpha \vert -m}\partial_\xi^\alpha a (\cdot,\xi)\Vert_{W^{\rho,\infty}(\xR^d)},
\end{equation}
where $I(d, m, s)_{\varrho}$ is the smallest even integer strictly greater than
\bq
\lceil m\rceil+\lceil s\rceil+\lceil -(m+s)\rceil+\lceil \varrho\rceil+\frac{5d}{2}+2.
\label{I}
\eq
If $a$ is a symbol independent of $\xi$,  the associated operator $T_a$ is called a paraproduct and we have the formal decomposition of Bony 
$$au=T_au+T_ua+R(a,u).$$
\subsubsection{Symbolic calculus}
The following technical lemmas will be used in proving results on symbolic calculus.
 \begin{lemm}\label{ulN}
Let $\mu \in \xR$, $w\in \mathcal{W}_{po}(\varrho),~\varrho\ge 0$ and $N \geq \varrho+d+1$. Then there exists $C>0$ such that
\begin{equation}\label{N>d+1}
\sup_{x\in \xR^d} \Vert w(x)\langle x-\cdot \rangle^{-N} u \Vert_{H^\mu(\xR^d)} \leq  C \Vert wu \Vert_{H_{ul}^\mu(\xR^d)}
\end{equation}
provided that the right hand side is finite.
\end{lemm}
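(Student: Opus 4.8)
The statement to prove is Lemma \ref{ulN}: for $w\in \mathcal{W}_{po}(\varrho)$ and $N\ge \varrho+d+1$, one has $\sup_{x}\Vert w(x)\langle x-\cdot\rangle^{-N}u\Vert_{H^\mu}\le C\Vert wu\Vert_{H^\mu_{ul}}$. The plan is to fix $x\in \xR^d$, decompose $u=\sum_{q\in \xZ^d}\chi_q u$ using the partition of unity from Definition \ref{Kato}, and estimate each piece separately, summing over $q$. Concretely, I would write
\[
w(x)\langle x-\cdot\rangle^{-N}u=\sum_{q\in \xZ^d} w(x)\langle x-\cdot\rangle^{-N}\chi_q u
\]
and bound the $H^\mu$ norm by $\sum_q \Vert w(x)\langle x-\cdot\rangle^{-N}\chi_q u\Vert_{H^\mu}$.

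First I would localize the weight $\langle x-\cdot\rangle^{-N}$ on the support of $\chi_q$: since $\supp\chi_q\subset q+[-1,1]^d$, for $y$ in this support we have $\langle x-y\rangle\sim \langle x-q\rangle$ uniformly, so $\langle x-y\rangle^{-N}\le C\langle x-q\rangle^{-N}$ there. More precisely I would use that $\langle x-\cdot\rangle^{-N}\widetilde\chi_q$ is bounded in $C^\infty_b$ by $C\langle x-q\rangle^{-N}$ (all derivatives), hence it is a Fourier multiplier-type/product factor whose $H^{|\mu|}$-type norm is controlled, and multiplying by it on $H^\mu$ costs a factor $C\langle x-q\rangle^{-N}$. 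Writing $\chi_q=\widetilde\chi_q\chi_q$ and using the product rule in Sobolev spaces (for instance \eqref{product:rule}, with one factor in a high fixed $H^s$, $s>d/2$, the other in $H^\mu$) gives
\[
\Vert w(x)\langle x-\cdot\rangle^{-N}\chi_q u\Vert_{H^\mu}\le C\, w(x)\langle x-q\rangle^{-N}\Vert \chi_q u\Vert_{H^\mu}.
\]
Next I would insert the weight $w$ into the last norm: $\chi_q u=(\chi_q w u)(\widetilde\chi_q w^{-1})$, and by property (i) of $\mathcal{W}_{po}$ the factor $\widetilde\chi_q w^{-1}$ is controlled in $C^\infty_b$ by $C\,w(q)^{-1}$ (using also property (ii) to pass from $w^{-1}(y)$ to $w^{-1}(q)$ on the support), so $\Vert \chi_q u\Vert_{H^\mu}\le C\,w(q)^{-1}\Vert \chi_q w u\Vert_{H^\mu}\le C\,w(q)^{-1}\Vert wu\Vert_{H^\mu_{ul}}$.

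Combining, each term is bounded by $C\,w(x)w(q)^{-1}\langle x-q\rangle^{-N}\Vert wu\Vert_{H^\mu_{ul}}$, and property (iii) of $\mathcal{W}_{po}(\varrho)$ gives $w(x)w(q)^{-1}\le C\langle x-q\rangle^{\varrho}$, so the $q$-th term is $\le C\langle x-q\rangle^{\varrho-N}\Vert wu\Vert_{H^\mu_{ul}}$. Finally I would sum: since $N\ge \varrho+d+1$, we have $\sum_{q\in \xZ^d}\langle x-q\rangle^{\varrho-N}\le \sum_{q\in \xZ^d}\langle x-q\rangle^{-d-1}\le C$ uniformly in $x$, which yields the claim. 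The only mild technical point — not really an obstacle — is justifying the Sobolev-space product estimates uniformly in $x$ and $q$: one must check that the $C^\infty_b$ semi-norms of $\langle x-\cdot\rangle^{-N}\widetilde\chi_q$, after the rescaling by $\langle x-q\rangle^{-N}$, and those of $\widetilde\chi_q w^{-1}$ after rescaling by $w(q)^{-1}$, are bounded independently of $x$ and $q$; this follows from the defining properties (i)–(ii) of $\mathcal{W}_{po}$ (the functions $r_1,r_1'$ lying in $C^\infty_b$ makes the logarithmic derivatives of $w$ bounded, so $w$ varies by a bounded factor over unit cubes together with all its relative derivatives). For $\mu<0$ one uses the corresponding product rule \eqref{product:rule} with negative target index, which is why a sufficiently high fixed regularity of the smooth factor is needed; the choice of $N$ only enters through the summability of the series.
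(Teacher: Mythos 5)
Your proof is correct and takes essentially the same route as the paper: decompose $u=\sum_q\chi_q u$, insert $w(y)^{-1}w(y)$ and pull out $\langle x-q\rangle^{-N}$, observe that the remaining localized factors $\frac{\langle x-q\rangle^{N}}{\langle x-\cdot\rangle^{N}}\widetilde\chi_q$ and (implicitly) $w(x)w(\cdot)^{-1}\widetilde\chi_q/(w(x)w(q)^{-1})$ are uniformly bounded in $C^\infty_b$ so that multiplication by them is bounded on $H^\mu$, then sum the geometric-weighted series using $w(x)w(q)^{-1}\lesssim\langle x-q\rangle^{\varrho}$ and $N\ge\varrho+d+1$. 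The only cosmetic difference is that you invoke the product rule \eqref{product:rule} explicitly where the paper simply records the $W^{\infty,\infty}$ uniform bound; the content is identical.
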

\begin{proof}
We write
  $$
w(x)\langle x-y\rangle^{-N} \chi_q (y)u (y)  = w(x)w(y)^{-1} \frac{1}{\langle x-q \rangle^N} \frac{\langle x-q \rangle^N}{ \langle x-y\rangle^{ N}}\widetilde{\chi}_q(y)w(y) \chi_q (y)u (y).$$
  Since the function $y \mapsto \frac{\langle x-q \rangle^N}{ \langle x-y\rangle^{ N}}\widetilde{\chi}_q(y)$ belongs to $W^{\infty,\infty}(\xR^d)$ with semi-norms uniformly bounded (independently of $x$ and $q$), we deduce that 
 \begin{align*}
  \Vert w(x)\langle x-\cdot \rangle^{-N} u \Vert_{H^\mu(\xR^d)}&\le \sum_{q \in \xZ^d}w(x)\Vert \langle x-\cdot \rangle^{-N} \chi_qu \Vert_{H^\mu} \leq C_N \sum_{q\in \xZ^d} \frac{w(x)w(q)^{-1}}{\langle x-q \rangle^N} \Vert   w u \Vert_{H_{ul}^\mu} \\
& \leq C_N \sum_{q\in \xZ^d} \frac{\langle  x-q\rangle ^{\rho}}{\langle x-q \rangle^N} \Vert    wu \Vert_{H_{ul}^\mu} \leq C'_N \Vert  wu\Vert_{H_{ul}^\mu}.
 \end{align*}
  \end{proof}
Combining this lemma and the proof of Lemma 7.13, \cite{ABZ2}, we obtain
 \begin{lemm}\label{techpara}
Let $w\in \mathcal{W}_{po}(\varrho),~\varrho\ge 0$. Let $\chi \in C_0^\infty(\xR^d)$ and $\widetilde{\chi} \in C_0^\infty(\xR^d)$ be equal to one on the support of $\chi$. Let $\psi, \theta \in \mathcal{S}(\xR^d)$. For every $m, \sigma  \in \xR$ there exists a constant $C>0$ such that
\begin{equation}\label{est:tech1}
 \sum_{j\geq -1} \Vert w\chi_k \psi(2^{-j}D)((1-\widetilde{\chi}_k)u) \theta(2^{-j}D)v \Vert_{H^m(\xR^d)} \leq C \Vert wu \Vert_{H^\sigma_{ul}(\xR^d)}  \Vert v\Vert_{L^\infty(\xR^d)}.
 \end{equation}
For every $m, \sigma, t \in \xR$ one can find a constant $C>0$ such that
  \begin{equation}\label{est:tech2}
 \sum_{j\geq -1} \Vert w\chi_k \psi(2^{-j}D)((1-\widetilde{\chi}_k)u) \theta(2^{-j}D)v \Vert_{H^m(\xR^d)} \leq C \Vert wu \Vert_{H^\sigma_{ul}(\xR^d)}  \Vert v\Vert_{H^t_{ul}(\xR^d)}
 \end{equation}
and 
  \begin{equation}\label{est:tech3}
 \sum_{j\geq -1} \Vert w\chi_k \psi(2^{-j}D)((1-\widetilde{\chi}_k)u) \theta(2^{-j}D)\widetilde\chi_kv \Vert_{H^m(\xR^d)} \leq C \Vert wu \Vert_{H^\sigma_{ul}(\xR^d)}  \Vert \widetilde \chi_kv\Vert_{H^t(\xR^d)}.
\end{equation}
 \end{lemm}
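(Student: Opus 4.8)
\textbf{Proof plan for Lemma \ref{techpara}.}

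The plan is to follow the structure of the proof of Lemma 7.13 in \cite{ABZ2}, inserting the weight $w$ and controlling it by means of Lemma \ref{ulN}. First I would reduce \eqref{est:tech1}--\eqref{est:tech3} to a single mechanism. The key geometric fact, exactly as in \cite{ABZ2}, is that on the support of $\chi_k(x)$ the factor $(1-\widetilde\chi_k)(y)$ forces $|x-y|\gtrsim 1$, so that the operator $\psi(2^{-j}D)$ applied to $(1-\widetilde\chi_k)u$ — whose kernel is $2^{jd}\check\psi(2^j(x-y))$, a Schwartz function concentrated near the diagonal — produces, at the point $x\in\supp\chi_k$, a quantity rapidly decaying in $2^j$ and with tails in $u$ that are ``far'' from $x$. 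Concretely, writing $\check\psi$ for the inverse Fourier transform, one bounds pointwise
\[
\la \chi_k(x)\,\psi(2^{-j}D)((1-\widetilde\chi_k)u)(x)\ra \le C_N\,2^{-jN}\chi_k(x)\int_{\xR^d}\frac{|u(y)|}{\langle x-y\rangle^{N}}\,dy
\]
for every $N$, after absorbing one power of $2^j$ into the rescaled kernel and using $|x-y|\gtrsim 1$ there. The crucial point is that the weight can be carried inside: $\chi_k(x)w(x)\le C w(x)$ on the support, and then I estimate $w(x)\int \langle x-y\rangle^{-N}|u(y)|\,dy$ by duality/Cauchy-Schwarz against $\langle x-\cdot\rangle^{-N'}$ and invoke Lemma \ref{ulN} with $\mu=\sigma$ to replace it by $C\|wu\|_{H^\sigma_{ul}}$, provided $N$ is chosen larger than $\varrho+d+1$ plus whatever Sobolev loss is needed.

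Next I would handle the second factor $\theta(2^{-j}D)v$. For \eqref{est:tech1} this is immediate: $\|\theta(2^{-j}D)v\|_{L^\infty}\le C\|v\|_{L^\infty}$ uniformly in $j$, and one only needs $H^m$ control of the product, which follows by putting $\psi(2^{-j}D)((1-\widetilde\chi_k)u)$ in $H^m$ (with the factor $2^{-j(N-m-\text{const})}$ giving a convergent geometric series over $j\ge-1$) and $\theta(2^{-j}D)v$ in $L^\infty$, using that $H^m$ is a module over $L^\infty\cap$ (high Sobolev). For \eqref{est:tech2} and \eqref{est:tech3} one replaces $\|v\|_{L^\infty}$ by $\|v\|_{H^t_{ul}}$, respectively $\|\widetilde\chi_kv\|_{H^t}$: since the whole expression is localized by $\chi_k(x)$, only the values of $v$ near $k$ matter, so $\|\theta(2^{-j}D)v\|$ on that region is controlled by $\|\widetilde\chi_k v\|_{H^t}$ for \eqref{est:tech3} directly, and by $\|v\|_{H^t_{ul}}$ for \eqref{est:tech2} after the same localization argument (here one also uses that $\theta(2^{-j}D)$ applied away from $\supp\widetilde\chi_k$ contributes only rapidly decaying tails, handled by the same kernel estimate as for $u$). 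In all three cases the sum over $j\ge-1$ converges because the large-$N$ gain $2^{-jN}$ dominates any fixed polynomial loss $2^{j\cdot\text{const}}$ coming from the Sobolev indices $m,\sigma,t$.

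The main obstacle, and the only place requiring real care, is bookkeeping the number of derivatives $N$ one must extract by integration by parts: $N$ has to beat simultaneously the weight growth $\varrho$ (via Lemma \ref{ulN}, needing $N\ge\varrho+d+1$), the Sobolev regularity $m$ of the target, the negative regularity $-\sigma$ (or $-t$) picked up when pairing the kernel against $u$ (resp.\ $v$) in $H^\sigma$ (resp.\ $H^t$), and an extra $d/2+\eps$ from Sobolev embedding to pass from $L^2$ to $L^\infty$ in the pointwise bound — all while keeping the resulting $j$-series summable. This is exactly the same ceiling-function arithmetic that appears in \eqref{choose:N:pseudo} and \eqref{I}, so one fixes $N$ to be any integer exceeding $\lceil m\rceil+\lceil|\sigma|\rceil+\lceil|t|\rceil+\lceil\varrho\rceil+\tfrac{3d}{2}+1$ and the estimates close. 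Everything else is a verbatim transcription of the unweighted argument in Lemma 7.13 of \cite{ABZ2}, with $w$ commuted past the localizations using conditions (i)--(iii) of Definition \ref{defi:Wp}.
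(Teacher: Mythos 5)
Your proposal matches the paper's approach exactly: the paper proves the lemma by a one-line reference — combine Lemma \ref{ulN} with the kernel/integration-by-parts argument of Lemma 7.13 in \cite{ABZ2} — and you have simply unfolded that reference. You correctly identify the key mechanism (the support separation forces $|x-y|\gtrsim 1$, the Schwartz kernel of $\psi(2^{-j}D)$ then yields arbitrary negative powers of $2^j$, and the weight is absorbed via Lemma \ref{ulN} after paying the polynomial cost $\langle x-q\rangle^{\varrho}$), and the bookkeeping on $N$ you describe is the same ceiling-function arithmetic the paper uses in \eqref{choose:N:pseudo} and \eqref{I}.
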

\begin{rema}
It follows easily from the proof of the above lemma that the same estimates as  in \eqref{est:tech1}, \eqref{est:tech2} and \eqref{est:tech3} hold if on the left-hand sides $2^{-j}$ is replaced by $2^{-j-j_0}$ where $j_0\in \xZ$ is fixed. We shall use this remark to deal with paraproduct estimates.
\end{rema}
It turns out that the symbolic calculus with weights possesses the same features as in the usual setting.
 \begin{theo}\label{calc:symb}
Let $m,m' \in \xR,~ \rho \ge 0$ and $w\in \mathcal{W}_{po}(\varrho),~\varrho\ge 0$.\\
$(i)$  If $a \in \Gamma^m_0(\xR^d),$ then for all $\mu \in \xR$, there exist a constant $C>0$ such that 
\[
\Vert wT_au\Vert _{H_{ul}^{\mu}(\xR^d)}\le CM_0^m(a, \mu)_{\varrho}\Vert wu\Vert _{H_{ul}^{\mu+m}(\xR^d)}.
\]
$(ii)$ If $a \in \Gamma^m_\rho(\xR^d)$, $b\in \Gamma^{m'}_\rho(\xR^d)$ then, for all $\mu \in \xR$,  there exist a constant $C>0$ such that 
\begin{align*}
&\Vert w(T_a T_b - T_{a\sharp b})u\Vert _{H_{ul}^{\mu}(\xR^d)}&\\
&\le C\left(M^m_\rho(a, \mu)_{\varrho}M^{m'}_0(b, \mu)_{\varrho} +M^m_0(a, \mu)_{\varrho}M^{m'}_\rho(b, \mu)_{\varrho}\right)\Vert wu\Vert _{H_{ul}^{\mu+m+m'-\rho}(\xR^d)}
\end{align*}
with 
\[
a\sharp b:=\sum_{|\alpha|<\rho}\frac{(-i)^{|\alpha|}}{\alpha !}\partial_{\xi}^{\alpha}a(x, \xi)\partial_x^{\alpha}b(x, \xi).
\] 
$(iii)$ Let $a \in \Gamma^m_\rho(\xR^d)$  and denote by $T_a^*$ the adjoint operator of $T_a$ and by $a^*$ the complex conjugate of $a$ (in case $a$ is a matrix, $a^*$ is its conjugate transpose). Then for all $\mu \in \xR$  there exists a constant $C>0$ such that
\[
\Vert w(T_a^* -T_b)u\Vert _{H_{ul}^{\mu}(\xR^d)}\le CM_\rho^m(a, \mu)_{\varrho}\Vert wu\Vert _{H_{ul}^{\mu+m-\rho}(\xR^d)}.
\]
with
\[
b=\sum_{|\alpha|<\rho}\frac{(-i)^{|\alpha|}}{\alpha !}\partial_{\xi}^{\alpha}\partial_x^{\alpha}a^*(x, \xi).
\]
\end{theo}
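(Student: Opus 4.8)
The plan is to reduce each of the three assertions to its known counterpart in classical Sobolev spaces (Bony's para\-differential calculus, see \cite{MePise}) and in unweighted Kato spaces (Theorem~$7.16$, \cite{ABZ2}), by localizing with the partition of unity $\chi_k$ and exploiting two features of a weight $w\in\mathcal{W}_{po}(\varrho)$: on any ball of bounded radius $w$ is comparable to its value at the center (property (ii) of Definition~\ref{defi:Wp}, together with $r_1,r_1'\in C^\infty_b$, which forces $x\mapsto w(x)/w(k)$ to have $W^{\infty,\infty}$ semi-norms bounded uniformly in $k$ on $\supp\widetilde\chi_k$), and the ratio $w(x)w^{-1}(y)$ grows at most polynomially (property (iii)). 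Throughout I fix $k\in\xZ^d$ and estimate $\|\chi_k\,wu\|_{H^\mu}$ uniformly in $k$; by definition of $H^\mu_{ul}$ this suffices.

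For $(i)$, split $u=\widetilde\chi_k u+(1-\widetilde\chi_k)u$, so that $\chi_k w T_au=\chi_k w T_a(\widetilde\chi_k u)+\chi_k w T_a((1-\widetilde\chi_k)u)$. For the near-diagonal term, using that $w\chi_k/w(k)$ is a multiplier bounded on $H^\mu$ uniformly in $k$ and the classical boundedness of $T_a$ on $H^\mu$,
\[
\|\chi_k w T_a(\widetilde\chi_k u)\|_{H^\mu}\le C\,w(k)\|T_a(\widetilde\chi_k u)\|_{H^\mu}\le C\,M^m_0(a,\mu)_\varrho\, w(k)\|\widetilde\chi_k u\|_{H^{\mu+m}}\le C\,M^m_0(a,\mu)_\varrho\,\|wu\|_{H^{\mu+m}_{ul}},
\]
the last step again by $w(k)\sim w$ on $\supp\widetilde\chi_k$. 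For the tail term, $\supp\chi_k$ and $\supp(1-\widetilde\chi_k)$ are separated; unwinding the Littlewood--Paley definition of $T_a$ and applying Lemma~\ref{techpara} (with the pieces of $u$ in the role of $v$, in the spirit of the proof of Lemma~$7.10$, \cite{ABZ2}), one obtains a bound $\le C\,M^m_0(a,\mu)_\varrho\|wu\|_{H^{\mu+m}_{ul}}$. Here the number of $\xi$-derivatives encoded in $I(d,m,\mu)_\varrho$ is chosen — exactly as in Proposition~\ref{pseudo}, condition \eqref{choose:N:pseudo}, and reflected in \eqref{I} — so that the off-diagonal decay of the para-product kernel beats the factor $w(k)w(q)^{-1}\lesssim\langle k-q\rangle^\varrho$ produced when passing from $\|\chi_q u\|$ to $\|w\chi_q u\|$; this is precisely where membership in $\mathcal{W}_{po}(\varrho)$, rather than in a general weight class, is used.

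Parts $(ii)$ and $(iii)$ are handled by the same mechanism: localize $T_aT_b-T_{a\sharp b}$, resp. $T_a^*-T_b$, with $\chi_k$, insert $\widetilde\chi_k$ (and, in the composition, a further cutoff between the two factors), and split into a near-diagonal piece to which the classical symbolic-calculus remainder estimates of \cite{MePise} apply after replacing $a,b$ by their $\widetilde\chi_k$-localizations, and a far piece controlled by Lemma~\ref{techpara}. The order gain $-\rho$ comes entirely from the classical remainder estimate, while the weight contributes only the comparability factors and the polynomial tail, absorbed as above; the products $M^m_\rho(a,\mu)_\varrho M^{m'}_0(b,\mu)_\varrho+M^m_0(a,\mu)_\varrho M^{m'}_\rho(b,\mu)_\varrho$ and $M^m_\rho(a,\mu)_\varrho$ arise because in the expansion of $a\sharp b$ (resp. of $a^*$) one factor is differentiated $<\rho$ times in $x$ and the other is not. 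The main obstacle, as in \cite{ABZ2}, is the careful treatment of the far-field $(1-\widetilde\chi_k)$ terms: one must check that after extracting the weight the remaining kernel still decays fast enough in $|k-q|$, which is what forces the enlarged derivative count $I(d,m,s)_\varrho$ and is exactly the reason the theory is confined to polynomially growing weights. Once Lemmas~\ref{ulN} and \ref{techpara} are available this step is bookkeeping, but it is the delicate one; the near-diagonal estimates then follow immediately from the classical theory combined with the local comparability of $w$.
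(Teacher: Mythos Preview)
Your near/far decomposition $\chi_k w T_a u = \chi_k w T_a(\widetilde\chi_k u) + \chi_k w T_a((1-\widetilde\chi_k)u)$ is exactly the paper's Step~1, and your treatment of the near piece matches. The gap is in the far piece for a \emph{general} symbol $a(x,\xi)$. Lemma~\ref{techpara} is stated for expressions of the form $\chi_k\psi(2^{-j}D)((1-\widetilde\chi_k)u)\,\theta(2^{-j}D)v$, i.e.\ for paraproducts where the ``symbol'' is a function of $x$ only; it does not directly apply to $T_a((1-\widetilde\chi_k)u)$ when $a$ genuinely depends on $\xi$. Your phrase ``unwinding the Littlewood--Paley definition of $T_a$ and applying Lemma~\ref{techpara}'' conflates the paraproduct structure with the general paradifferential one.

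The paper closes this gap by a three-step reduction that you skip entirely. Step~1 treats only $a=a(x)\in L^\infty$, where $T_a$ \emph{is} a paraproduct and Lemma~\ref{techpara} (estimate~\eqref{est:tech1}) applies verbatim. Step~2 handles symbols $a(x,\xi)=b(x)h(\xi)$ with $h$ homogeneous, using the factorization $T_a=T_b\,(\psi h)(D_x)$ together with Proposition~\ref{pseudoh} for the multiplier. Step~3 reaches the general homogeneous symbol by expanding in spherical harmonics $a(x,\xi)=\sum_\nu b_\nu(x)h_\nu(\xi)$ and summing the Step~2 bounds; convergence of the series is what forces the precise derivative count $I(d,m,\mu)_\varrho$ in~\eqref{I} (one needs $\lambda_\nu^{-I}\lambda_\nu^{r}$ summable with $r$ large enough to feed Proposition~\ref{pseudoh}). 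Without this decomposition your far-field estimate is unjustified: you would need, in effect, a version of Proposition~\ref{pseudo} for $S^m_{1,1}$ symbols, which is neither stated nor proved here. Add the spherical-harmonic reduction and your argument becomes the paper's; as written it is incomplete. The same omission propagates to $(ii)$ and $(iii)$, which the paper says are ``proved along the same lines'' --- meaning via the same three steps, not via direct localization of the remainder operator.
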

\begin{proof}
 We give the proof for the first assertion only since these three points are proved along the same lines. For simplicity we shall consider symbols in $\dot{\Gamma}^m_\rho(\xR^d)$.\\ {\it Step 1.} Consider first the case where $a$ is a bounded function and write
$$\chi_kw T_a u = \chi_k wT_a (\widetilde{\chi}_k u) + \chi_kwT_a ((1- \widetilde{\chi}_k)u).$$
The classical theory gives
\[
\begin{aligned}
 \Vert \chi_k wT_a (\widetilde{\chi}_k u) \Vert_{H^\mu } &\leq C w(k)\Vert a \Vert_{L^\infty } \Vert  \widetilde{\chi}_k u  \Vert_{H^\mu } \leq C  w(k)\Vert a \Vert_{L^\infty } w(k)^{-1}\Vert  \widetilde{\chi}_kwu \Vert_{H^\mu}\\
&\le C\Vert a \Vert_{L^\infty }\Vert  wu \Vert_{H^\mu_{ul}}.
\end{aligned}
\]
The estimate for the second term follows immediately from (\ref{est:tech1}).\\
{\it Step 2.} Next we consider the case $a(x,\xi) = b(x) h(\xi)$ where $h(\xi) = \vert \xi \vert^m\widetilde{h}\big (\frac{\xi}{\vert \xi \vert}\big)$ with $\widetilde{h} \in C^\infty(\xS^{d-1})$. Then directly from the definition we have $T_a = T_b (\psi h)(D_x) $, where the cut-off $\psi$ is given by \eqref{cutoff:psi}. The desired estimate in $(i)$ follows from Step 1 and Proposition \ref{pseudoh}.\\
{\it Step 3.}  Finally, for the general case we introduce $(\widetilde{h}_\nu)_{\nu \in \xN^*}$ an orthonormal basis of $L^2(\xS^{d-1})$ consisting of eigenfunctions of the (self adjoint) Laplace Beltrami operator $\Delta_\omega = \Delta_{\xS^{d-1}} $ on  $L^2(\xS^{d-1})$ i.e. $\Delta_\omega \widetilde{h}_\nu = \lambda^2_\nu \widetilde{h}_\nu$. Setting $h_\nu = \vert \xi \vert^m \widetilde{h} (\omega )$, 
$\omega = \frac{\xi}{\vert \xi \vert}$ when $ \xi \neq 0,$ we can write
$$ a(x,\xi) = \sum_{\nu \in \xN^*}b_\nu(x) h_\nu(\xi) \quad \text{where} \quad b_\nu(x) = \int_{\xS^{d-1}} a(x,\omega) \overline{\widetilde{h}_\nu(\omega)} d\omega.$$
With $I=I(d, m, \mu)_{\varrho}$ we have
$$\lambda_\nu^Ib_\nu(x) = \int_{\xS^{d-1}} \Delta_\omega^{\frac{I}{2}}a(x,\omega) \overline{\widetilde{h}_\nu(\omega)} d\omega, $$
 which gives
\begin{equation} 
\Vert b_\nu \Vert_{L^\infty(\xR^d)} \leq C \lambda_\nu^{-I} M^m_0(a, \mu)_{\varrho}. 
\end{equation}
By definition of $I$, we can find an integer $r$ such that
\[
\lceil m\rceil+\lceil\mu\rceil+\lceil-(m+\mu)\rceil+\lceil\varrho\rceil+\frac{3d}{2}+1<r<I-d.
\]
 By the Weyl formula we know that $\lambda_\nu \sim c\nu^{\frac{1}{d}}$. In addition, there exists a positive constant $K$ such that for all $\nu \geq 1$
\begin{equation}\label{hnu}
\Vert \widetilde{h}_\nu \Vert_{H^{r}(\xS^{d-1})} \leq K \lambda_\nu^{r}.
\end{equation}
 Now using the  steps above and Proposition~\ref{pseudoh} we obtain ($\psi$ is given by \eqref{cutoff:psi})
 \begin{align*}
  \Vert wT_a u \Vert_{H^\mu_{ul} } &\leq \sum_{\nu \geq 1} \Vert T_{b_\nu}(\psi h_\nu)(D_x) u \Vert_{H^\mu_{ul} }\\
  &\leq C \sum_{\nu \geq 1}\Vert b_\nu \Vert_{L^\infty(\xR^d)}\Vert \widetilde{h}_\nu\Vert_{H^{r}(\xS^{d-1})} \Vert wu \Vert_{H^{\mu + m}_{ul} }\\
&\les  M^m_0(a, \mu)_{\varrho} \Vert wu \Vert_{H^{\mu + m}_{ul} } \sum_{ \nu \geq 1}\nu^{\frac{-I+r}{d}} \\
&\les M^m_0(a, \mu)_{\varrho} \Vert wu \Vert_{H^{\mu + m}_{ul}}.
\end{align*}
\end{proof}
\subsubsection{Paraproducts}
\begin{prop}\label{paraproduct}
Let $w\in \mathcal{W}_{po}(\varrho),~\varrho\ge 0$. Let $s_0, s_1, s_2\in \xR$ satisfying $s_0\le s_2$ and $s_0<s_1+s_2-\frac{d}{2}$. Then there exists $C>0$ such that
\[
\|wT_au\|_{H^{s_0}_{ul}}\le C\min\left\{\|a\|_{H^{s_1}_{ul}}\|wu\|_{H^{s_2}_{ul}}, \|wa\|_{H^{s_1}_{ul}}\|u\|_{H^{s_2}_{ul}}\right\}.
\]
\end{prop}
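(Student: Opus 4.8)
The plan is to reduce the weighted bound to the classical (unweighted) paraproduct estimate on $\xR^d$ by a localization argument, exploiting the three defining properties of $\mathcal{W}_{po}(\varrho)$. Recall that in Bony's decomposition $T_au$ is a sum over dyadic blocks of pieces of the form $\psi(2^{-j}D)a\cdot\theta(2^{-j}D)u$, so that $T_au$ near a point $x$ depends only on the low frequencies of $a$ near $x$ and on $u$ near $x$, up to rapidly decaying tails. Accordingly, I would fix $k\in\xZ^d$ and bound $\Vert\chi_kwT_au\Vert_{H^{s_0}}$ uniformly in $k$. Introducing nested cutoffs $\chi$, $\widetilde\chi$, $\overline\chi\in C_0^\infty(\xR^d)$, each equal to one on the support of the previous, I would split
\[
\chi_kwT_au=\chi_kwT_{\overline\chi_ka}(\widetilde\chi_ku)+\chi_kwT_{(1-\overline\chi_k)a}(\widetilde\chi_ku)+\chi_kwT_a\big((1-\widetilde\chi_k)u\big)
\]
into one near-diagonal term and two remote terms.

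For the near-diagonal term, property (ii) gives $w\simeq w(k)$ on $\supp\chi_k$, so this term is $\lesssim w(k)\Vert T_{\overline\chi_ka}(\widetilde\chi_ku)\Vert_{H^{s_0}}$, and the classical paraproduct estimate on $\xR^d$ --- valid precisely under $s_0\le s_2$ and $s_0<s_1+s_2-\frac d2$ --- bounds this by $\lesssim w(k)\Vert\overline\chi_ka\Vert_{H^{s_1}}\Vert\widetilde\chi_ku\Vert_{H^{s_2}}$. Then I would distribute the factor $w(k)$ in either of two ways, using property (i) so that $\widetilde\chi_kw^{\pm1}$ is an admissible multiplier with seminorms comparable (by (ii)) to $w(k)^{\pm1}$: either $w(k)\Vert\widetilde\chi_ku\Vert_{H^{s_2}}\lesssim\Vert wu\Vert_{H^{s_2}_{ul}}$ together with $\Vert\overline\chi_ka\Vert_{H^{s_1}}\le\Vert a\Vert_{H^{s_1}_{ul}}$, or $w(k)\Vert\overline\chi_ka\Vert_{H^{s_1}}\lesssim\Vert wa\Vert_{H^{s_1}_{ul}}$ together with $\Vert\widetilde\chi_ku\Vert_{H^{s_2}}\le\Vert u\Vert_{H^{s_2}_{ul}}$. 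This yields the two terms inside the minimum.

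The two remote terms are, after expanding Bony's sum, exactly of the shape governed by Lemma \ref{techpara}: in each, $w\chi_k$ multiplies a Littlewood--Paley piece of a function carrying a factor $(1-\widetilde\chi_k)$ (or $(1-\overline\chi_k)$) times a Littlewood--Paley piece of the companion function; the blocks have kernels decaying rapidly off the diagonal. I would therefore invoke \eqref{est:tech1}--\eqref{est:tech3} directly (with Lemma \ref{ulN} behind them), which bound these terms by $C\Vert wu\Vert_{H^{\sigma}_{ul}}\Vert a\Vert_{H^{t}_{ul}}$ for suitable $\sigma\le s_2$ and $t$ with room to spare, hence by the right-hand side; the symmetric choice --- moving the weight onto $a$ instead --- gives the other half of the minimum.

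The hard part will be the weighted bookkeeping, i.e. making precise that the weight can be shifted freely onto whichever of the factors $a$, $u$ one prefers. This rests on three facts already isolated in the earlier lemmas: the local comparability $w\simeq w(k)$ on $\supp\chi_k$ (property (ii)); the boundedness of $\nabla w/w$ and $\nabla w^{-1}/w^{-1}$ (property (i)), which makes $\widetilde\chi_kw^{\pm1}/w(k)^{\pm1}$ a multiplier with $k$-uniform seminorms; and the polynomial bound $w(x)w^{-1}(y)\le C\langle x-y\rangle^\varrho$ (property (iii)), which is what makes the remote sums converge after enough integrations by parts --- and which is the reason the statement is confined to $\mathcal{W}_{po}$ rather than $\mathcal{W}_{ex}$. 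Since all three are packaged into Lemmas \ref{ulN} and \ref{techpara}, the remaining work is essentially organizing Bony's decomposition around $\chi_k,\widetilde\chi_k,\overline\chi_k$ and invoking those lemmas together with the classical estimate.
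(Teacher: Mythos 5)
Your proposal is correct and follows essentially the same route as the paper: the same three-term split of $\chi_k w T_a u$ into a near-diagonal piece handled by the classical paraproduct estimate plus the local comparability $w\simeq w(k)$, and two remote pieces handled by Lemma~\ref{techpara} (via Lemma~\ref{ulN}), with the second branch of the minimum obtained by swapping which of $a$, $u$ is split first. The only cosmetic difference is that you introduce a third nested cutoff $\overline\chi_k$, while the paper reuses $\widetilde\chi_k$ on both factors.
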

\begin{proof}
We write
\bq\label{writeTau}
\chi_kwT_au=\chi_kwT_a(1-\widetilde\chi_k)u+\chi_kwT_{\widetilde\chi_ka}\widetilde\chi_ku+\chi_kwT_{(1-\widetilde\chi_k)a}\widetilde\chi_ku.
\eq
By the classical result, we have
\bq\label{wchi}
\Vert \chi_kwT_{\widetilde\chi_ka}\widetilde\chi_ku\Vert _{H^{s_0}}\lesssim w(k)\Vert T_{\widetilde\chi_ka}\widetilde\chi_ku\Vert _{H^{s_0}}
\lesssim  w(k)\|\widetilde\chi_ka\|_{H^{s_1}}\|\widetilde\chi_ku\|_{H^{s_2}}
\lesssim \|a\|_{H^{s_1}_{ul}}\|wu\|_{H^{s_2}_{ul}}.
\eq
On the other hand, applying \eqref{est:tech2} gives
$$
\Vert \chi_kwT_a(1-\widetilde\chi_k)u\Vert _{H^{s_0}}\lesssim\|a\|_{H^{s_1}_{ul}}\|wu\|_{H^{s_2}_{ul}}
$$
and it follows form \eqref{est:tech3} (applied with $w\equiv 1$) that
$$
\begin{aligned}
\Vert  \chi_kwT_{(1-\widetilde\chi_k)a}\widetilde\chi_ku\Vert _{H^{s_0}}&\lesssim w(k)\Vert  \chi_kT_{(1-\widetilde\chi_k)a}\widetilde\chi_ku\Vert _{H^{s_0}}\\
&\lesssim w(k)\|a\|_{H^{s_1}_{ul}}\|\widetilde\chi_ku\|_{H^{s_2}}
\lesssim \|a\|_{H^{s_1}_{ul}}\|wu\|_{H^{s_2}_{ul}}.
\end{aligned}
$$
Consequently, we obtain 
\[
\|wT_au\|_{H^{s_0}_{ul}}\le C\|a\|_{H^{s_1}_{ul}}\|wu\|_{H^{s_2}_{ul}}.
\]
Now if instead of (\ref{writeTau}), we decompose
\[\label{writeTau}
\chi_kwT_au=\chi_kwT_{(1-\widetilde\chi_k)a}u+\chi_kwT_{\widetilde\chi_ka}\widetilde\chi_ku+\chi_kwT_{\widetilde\chi_ka}(1-\widetilde\chi_k)u
\]
then we get
\[
\|wT_au\|_{H^{s_0}_{ul}}\le C\|wa\|_{H^{s_1}_{ul}}\|u\|_{H^{s_2}_{ul}}.
\]
The proof is complete.
\end{proof}
 \begin{prop}\label{paralin}
Let $w\in \mathcal{W}_{po}(\varrho),~\varrho\ge 0$ and two functions $a\in H ^{s_1}_{ul}(\xR^d), u\in H^{s_2}_{ul}(\xR^d)$. \\
$(i)$ If  $s_1+s_2>0$ then 
 \begin{equation}\label{R(a,u)}
  \Vert wR(a,u) \Vert_{H_{ul}^{s_1+s_2 - \frac{d}{2}}(\xR^d)} \leq C\Vert a\Vert_{ H^{s_1}_{ul}(\xR^d)} \Vert wu\Vert_{ H^{s_2}_{ul}(\xR^d)}.
  \end{equation}
$(ii)$ If $s_1+s_2>0,~s_0\le s_1$ and $s_0<s_1+s_2 - \frac{d}{2}$ then there exists a constant $C>0$ such that
 \begin{align}\label{est:a-Ta}
 \Vert w(a-T_a)u \Vert_{H^{s_0}_{ul}(\xR^d)} \leq C \min\left\{\Vert a \Vert_{H^{s_1}_{ul}(\xR^d)}\Vert  wu \Vert_{H^{s_2}_{ul}(\xR^d)}, \Vert wa \Vert_{H^{s_1}_{ul}(\xR^d)}\Vert  u \Vert_{H^{s_2}_{ul}(\xR^d)}\right\}.
 \end{align}
$(iii)$ If $s_1+s_2>0,~s_0\le s_1,~s_2$ and $s_0<s_1+s_2 - \frac{d}{2}$ then there exists a constant $C>0$ such that
 \begin{align}\label{product:w}
 \Vert wau \Vert_{H^{s_0}_{ul}(\xR^d)} \leq C \min\left\{\Vert a \Vert_{H^{s_1}_{ul}(\xR^d)}\Vert  wu \Vert_{H^{s_2}_{ul}(\xR^d)}, \Vert wa \Vert_{H^{s_1}_{ul}(\xR^d)}\Vert  u \Vert_{H^{s_2}_{ul}(\xR^d)}\right\}.
 \end{align}
\end{prop}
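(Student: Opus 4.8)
The plan is to reduce everything to Bony's decomposition $au=T_au+T_ua+R(a,u)$, the weighted paraproduct bound of Proposition~\ref{paraproduct}, and the localisation estimates \eqref{est:tech1}--\eqref{est:tech3} of Lemma~\ref{techpara}; the genuine work is the remainder bound $(i)$, from which $(ii)$ and $(iii)$ follow by bookkeeping. So I would establish $(i)$ first.

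For $(i)$, fix $k\in\xZ^d$ and estimate $\Vert\chi_k wR(a,u)\Vert_{H^{s_1+s_2-\frac{d}{2}}}$ uniformly in $k$, where $R(a,u)=\sum_{j\ge-1}\Delta_ja\,\widetilde\Delta_ju$ and $\widetilde\Delta_j$ denotes the usual diagonal Littlewood--Paley block. Splitting $a=\widetilde\chi_ka+(1-\widetilde\chi_k)a$ and $u=\widetilde\chi_ku+(1-\widetilde\chi_k)u$ produces four sums. The purely local one is $\chi_kwR(\widetilde\chi_ka,\widetilde\chi_ku)$: here I would use the classical remainder estimate in $H^\sigma(\xR^d)$ from \cite{MePise} (this is the only place the hypothesis $s_1+s_2>0$ enters), then replace $w$ by $w(k)$ on $\supp\chi_k$ thanks to Definition~\ref{classW}(ii), use $\Vert\widetilde\chi_ka\Vert_{H^{s_1}}\le C\Vert a\Vert_{H^{s_1}_{ul}}$, and absorb $w(k)$ into the $u$-factor via $w(k)\Vert\widetilde\chi_ku\Vert_{H^{s_2}}\le C\Vert wu\Vert_{H^{s_2}_{ul}}$ --- the latter because $w(k)\widetilde\chi_kw^{-1}$ has $C^\infty_b$-seminorms bounded uniformly in $k$ (a consequence of $r_1,r'_1\in C^\infty_b$ and Definition~\ref{classW}(ii)). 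Each of the three remaining sums carries at least one factor $1-\widetilde\chi_k$ and is handled by \eqref{est:tech1}--\eqref{est:tech3} (and the remark following Lemma~\ref{techpara}, which allows replacing $2^{-j}$ by $2^{-j-j_0}$), so that the $j$-summation converges with no condition on $s_1,s_2$; the one point requiring care is to arrange the weight so that it ends on $u$: when $u$ is the far factor one applies these estimates directly, and when $u$ is the localised factor one first passes to $w\equiv1$ together with $w\le w(k)$ on $\supp\chi_k$ and absorbs $w(k)$ into $\widetilde\chi_ku$ as above, so that $a$ needs no weight at all. Summing the four contributions gives \eqref{R(a,u)}.

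Parts $(ii)$ and $(iii)$ are then immediate. Since $(a-T_a)u=T_ua+R(a,u)$, I would bound $wT_ua$ by Proposition~\ref{paraproduct} applied with the roles of the symbol and the argument exchanged --- admissible precisely because the hypotheses $s_0\le s_1$ and $s_0<s_1+s_2-\frac{d}{2}$ are exactly what that proposition asks of $T_ua$ --- which yields the full minimum in \eqref{est:a-Ta}; and $wR(a,u)$ by $(i)$, using the embedding $H^{s_1+s_2-\frac{d}{2}}_{ul}\hookrightarrow H^{s_0}_{ul}$ and the symmetry $R(a,u)=R(u,a)$ for the second term of the minimum. For $(iii)$ one writes $au=T_au+T_ua+R(a,u)$ and bounds $T_au$ and $T_ua$ by Proposition~\ref{paraproduct} (using $s_0\le s_2$ and $s_0\le s_1$ respectively, together with $s_0<s_1+s_2-\frac{d}{2}$) and $R(a,u)$ by $(i)$ once more.

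The main obstacle is entirely contained in $(i)$: threading the weight through the combined Littlewood--Paley/$\widetilde\chi_k$ expansion so that it always lands on the intended factor. This is controlled by the two elementary but repeatedly used facts that, for a weight in $\mathcal{W}_{po}(\varrho)$, $w$ is comparable to the constant $w(k)$ on each unit cell and that $w(k)^{\pm1}\widetilde\chi_kw^{\mp1}$ lie in $C^\infty_b$ uniformly in $k$; once these are in place the far-interaction terms cost nothing and only the single near--near term invokes the classical theory, which is why $s_1+s_2>0$ is needed only there.
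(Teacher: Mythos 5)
Your proof is correct and is essentially the paper's argument: the same $\widetilde\chi_k/(1-\widetilde\chi_k)$ splitting of $R(a,u)$ into a near--near block (classical remainder bound in $H^\sigma$ combined with the $w\approx w(k)$-on-$\supp\chi_k$ absorption onto $\widetilde\chi_k u$) plus far-interaction terms handled by Lemma~\ref{techpara}, with $(ii)$ and $(iii)$ then deduced from Bony's decomposition, part $(i)$, and Proposition~\ref{paraproduct}. The only superficial difference is the reference you use for the classical remainder estimate (\cite{MePise} rather than Theorem~2.9~$(i)$ of \cite{ABZ1}).
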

\begin{proof}
$(i)$ By definition, we have (for some cut-off function $\varphi$)
$$R(a,u) = \sum_{j\geq -1} \sum_{\vert k-j\vert \leq 1} \varphi(2^{-j}D)a \cdot\varphi(2^{-k}D)u.$$
We write $a= \widetilde{\chi}_k a + (1-\widetilde{\chi}_k)a, u=  \widetilde{\chi}_k u + (1-\widetilde{\chi}_k)u$ so that
$$\chi_kwR(a,u) = \chi_kwR(\widetilde{\chi}_ka,\widetilde{\chi}_ku) + \chi_kwS_k(a,u).$$
The first term is estimated by the same method as (\ref{wchi}) with the use of Theorem 2.9 $(i)$ in \cite{ABZ1}. The remainder $w\chi_kS_k(a,u)$ is estimated by using \eqref{est:tech2} and \eqref{est:tech3}.\\
$(ii)$  and $(iii)$ are direct consequences of $(i)$ and Proposition \ref{paraproduct}.
\end{proof}
\begin{rema}
We remark that with the methods in the proofs above, the commutator estimate in Lemma $7.20$, \cite{ABZ2} still holds for uniformly local Sobolev spaces with the weight $w\in \mathcal{W}_{po}(\varrho),~\varrho\ge 0$.
\end{rema}
\subsection{Transport equations}
For the sake of completeness we present a weighted version of Lemma $7.19$, \cite{ABZ2} on transport equations in uniformly local Sobolev spaces. The result holds for weights in a large class than $\mathcal{W}$ while its proof is a direct consequence of Lemma $7.19$, \cite{ABZ2}.\\
We denote by $\mathcal{W}_0$ the class of all functions $w:\xR^d\to (0, \infty)$ such that $\frac{\nabla w}{w}\in C^\infty_b(\xR^d)$.
\begin{lemm}
Let $I=[0, T],~s_0>1+{\frac{d}{2}}$, $\mu>0$ and $w\in \mathcal{W}_0$. Then there exists $\cF:\xR^+\to \xR^+$ non decreasing such that for $V_j\in L^\infty(I, H^{s_0}(\xR^d))_{ul}\cap L^\infty(I, H^\mu(\xR^d))_{ul}$ $j=\overline{1, d}$, $wf\in L^1(I, H^\mu(\xR^d))_{ul}$, $wu_0\in H^\mu_{ul}(\xR^d)$ and any solution $u$ with $wu\in L^\infty(I, H^{s_0}(\xR^d))_{ul}$ to the problem
\bq\label{eq:transport}
(\partial_t +V\cdot \nabla)u=f,\quad u\arrowvert_{t=0}=u_0
\eq
we have 
\begin{multline}\label{transport}
\Vert wu\Vert_{L^\infty(I, H^{\mu})_{ul}}\le \cF\big(T\Vert V\Vert_{L^\infty(I, H^{s_0})_{ul}}\big)\Big\{\Vert wu_0\Vert_{H^\mu_{ul}}+\Vert wf\Vert_{L^1(I, H^{\mu})_{ul}}\Big. \\ \Big.+\sup_{k\in \xZ^d}\int_0^T\|wu(\sigma)\|_{H^{s_0}_{ul}}\|\widetilde\chi_kV(\sigma)\|_{H^\mu}d\sigma\Big\}.
\end{multline}
\end{lemm}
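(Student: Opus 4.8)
The statement is a weighted version of the transport estimate (Lemma 7.19 in \cite{ABZ2}) for the uniformly local spaces, so the natural strategy is to reduce it to the unweighted statement by conjugating the equation with $w$. First I would set $v=wu$ and compute the equation satisfied by $v$. Since $\partial_t$ commutes with multiplication by $w$ (which is time-independent) and
\[
V\cdot\nabla v = V\cdot\nabla(wu)=w\,V\cdot\nabla u + u\,V\cdot\nabla w = w\,V\cdot\nabla u + \Big(V\cdot\frac{\nabla w}{w}\Big)v,
\]
the function $v$ solves
\[
(\partial_t+V\cdot\nabla)v = wf + \Big(V\cdot\frac{\nabla w}{w}\Big)v,\qquad v\arrowvert_{t=0}=wu_0 .
\]
The point of working in $\mathcal{W}_0$ is precisely that $r'_1:=\nabla w/w\in C^\infty_b(\xR^d)$, so the extra term $(V\cdot r'_1)v$ is a genuine lower-order perturbation with a bounded coefficient.

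\medskip
\textbf{Applying the unweighted result.} Now I would invoke Lemma $7.19$ of \cite{ABZ2} directly to $v$, with the same vector field $V$ (whose norms are unchanged, being weight-free), datum $wu_0\in H^\mu_{ul}$, and source term $g:=wf + (V\cdot r'_1)v$. This yields
\[
\|v\|_{L^\infty(I,H^\mu)_{ul}}\le \cF\big(T\|V\|_{L^\infty(I,H^{s_0})_{ul}}\big)\Big\{\|wu_0\|_{H^\mu_{ul}}+\|g\|_{L^1(I,H^\mu)_{ul}}+\sup_{k}\int_0^T\|v(\sigma)\|_{H^{s_0}_{ul}}\|\widetilde\chi_kV(\sigma)\|_{H^\mu}\,d\sigma\Big\}.
\]
It remains to absorb the piece of $g$ coming from $(V\cdot r'_1)v$. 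For its $L^1(I,H^\mu)_{ul}$ norm one uses the product rule in Kato's spaces (Proposition $7.3$ of \cite{ABZ2}, valid since $s_0>1+\tfrac d2$) together with $r'_1\in C^\infty_b$, giving
\[
\|(V\cdot r'_1)v\|_{H^\mu_{ul}}\lesssim \|V\|_{H^{s_0}_{ul}}\,\|r'_1\|_{W^{\lceil|\mu|\rceil,\infty}}\,\|v\|_{H^\mu_{ul}}\le C\,\|V\|_{H^{s_0}_{ul}}\,\|v\|_{H^\mu_{ul}},
\]
so that $\int_0^T\|(V\cdot r'_1)v(\sigma)\|_{H^\mu_{ul}}\,d\sigma\lesssim T\|V\|_{L^\infty(I,H^{s_0})_{ul}}\sup_{\sigma\le T}\|v(\sigma)\|_{H^\mu_{ul}}$. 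A standard Gr\"onwall/continuation argument in $T$ (as in \cite{ABZ2}) absorbs this term into the left-hand side, at the cost of enlarging $\cF$, and one is left with exactly the claimed bound after rewriting $v=wu$ and $\|v\|_{H^{s_0}_{ul}}=\|wu\|_{H^{s_0}_{ul}}$ inside the last integral.

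\medskip
\textbf{Main obstacle.} The calculus part is routine; the only genuinely delicate point is the bookkeeping in the absorption step, namely making sure the perturbation term $(V\cdot r'_1)v$ is controlled \emph{in the $H^\mu_{ul}$ norm on the left} rather than only in the $H^{s_0}_{ul}$ norm appearing in the last integral — otherwise the Gr\"onwall loop does not close. This is handled by keeping $\mu$ fixed throughout and using the product estimate above (which requires $s_0>1+\tfrac d2$ to put the rough factor $v$ in $H^\mu$ and the smooth factor $V$ in $H^{s_0}$); since the extra term's contribution is multiplied by a power of $T$, choosing $T$ small — or iterating on subintervals whose length depends only on $\|V\|_{L^\infty(I,H^{s_0})_{ul}}$ — yields the estimate on all of $[0,T]$ with a nondecreasing $\cF$. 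No restriction finer than $w\in\mathcal{W}_0$ is needed, in contrast to the paradifferential results, because here the weight enters only through the single bounded coefficient $r'_1$.
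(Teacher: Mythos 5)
Your conjugation step --- setting $v=wu$, observing that $v$ solves a transport equation with the same velocity field and the extra lower-order source $(V\cdot r'_1)\,v$, and then invoking Lemma~7.19 of \cite{ABZ2} --- is exactly the paper's first move. However, the way you handle the extra term diverges from the paper, and the difference is not cosmetic. You apply the product rule with $V\cdot r'_1$ in the regular slot $H^{s_0}$ and $v$ in $H^\mu$, obtaining $\|(V\cdot r'_1)v\|_{H^\mu_{ul}}\lesssim\|V\|_{H^{s_0}_{ul}}\|v\|_{H^\mu_{ul}}$, integrate in time to produce $T\|V\|_{L^\infty(I,H^{s_0})_{ul}}\|v\|_{L^\infty(I,H^\mu)_{ul}}$, and then invoke a Gr\"onwall/continuation step to absorb this into the left-hand side. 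The paper makes the opposite choice inside the product rule: it places $wu$ in $H^{s_0}$ and $V$ in $H^\mu$, which yields $\|\chi_k (wu)(V\cdot r)\|_{H^\mu}\lesssim\|wu\|_{H^{s_0}_{ul}}\|\widetilde\chi_k V\|_{H^\mu}$, and hence bounds the extra term's $L^1(I,H^\mu)_{ul}$-norm directly by $\sup_k\int_0^T\|wu(\sigma)\|_{H^{s_0}_{ul}}\|\widetilde\chi_k V(\sigma)\|_{H^\mu}\,d\sigma$ --- which is \emph{precisely} the third term already present in \eqref{transport}. No Gr\"onwall and no subinterval iteration are needed; the estimate follows in one stroke. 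The paper's route is also more robust: your absorption argument implicitly uses the a priori finiteness of $\|wu\|_{L^\infty(I,H^\mu)_{ul}}$, which is not among the lemma's hypotheses (only $wu\in L^\infty(I,H^{s_0})_{ul}$ is assumed). When $\mu\le s_0$ this is automatic, but otherwise you would have to add a regularization/approximation step; the paper's bound sidesteps the issue because its right-hand side involves only $\|wu\|_{H^{s_0}_{ul}}$ and $\|\widetilde\chi_k V\|_{H^\mu}$, both finite under the stated assumptions.
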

\begin{proof}
If $u$ is a solution to the transport problem \eqref{eq:transport} then $wu$ is a solution to 
\[
(\partial_t +V\cdot \nabla)(wu)=g,\quad wu\arrowvert_{t=0}=u_0
\]
with $g:=f+uV\cdot\nabla w=f+uwV\cdot r$, $r=\frac{\nabla w}{w}\in C^\infty_b(\xR^d)$. We are in position to apply Lemma $7.19$, \cite{ABZ2} to have
\begin{multline*}
\Vert wu\Vert_{L^\infty(I, H^{\mu})_{ul}}\le \cF\big(T\Vert V\Vert_{L^\infty(I, H^{s_0})_{ul}}\big)\Big\{\Vert wu_0\Vert_{H^\mu_{ul}}+\Vert wg\Vert_{L^1(I, H^{\mu})_{ul}}\Big. \\ \Big.+\sup_{k\in \xZ^d}\int_0^T\|wu(\sigma)\|_{H^{s_0}_{ul}}\|\widetilde\chi_kV(\sigma)\|_{H^\mu}d\sigma\Big\}.
\end{multline*}
On the other hand, for $k\in \xZ^d$ using the product rule \eqref{product:rule} one has with $n$ large enough
\[
\Vert \chi_k wu(\sigma)V(\sigma)\cdot r\Vert_{H^\mu}\le C\Vert \widetilde\chi_k wu(\sigma)V(\sigma)\Vert_{H^\mu}\Vert \chi_k r\Vert_{H^n}
\]
Since $r\in C^\infty_b(\xR^d)$, $\Vert \chi_k r\Vert_{H^n}$ can be bounded by a constant independent of $k$. 
 Finally, applying once again the product rule \eqref{product:rule} (remark that $s_0>1+{\frac{d}{2}}$) gives
\begin{align*}
\Vert \widetilde\chi_k wu(\sigma)V(\sigma)\Vert_{H^\mu}&\le \sum_{|q-k|\le M}\Vert\chi_qwu(\sigma)\Vert_{H^{s_0}}\Vert \widetilde\chi_k V(\sigma)\Vert_{H^\mu}\\
&\le C\Vert wu(\sigma)\Vert_{H^{s_0}_{ul}}\Vert \widetilde\chi_k V(\sigma)\Vert_{H^\mu}.
\end{align*}
Consequently,
\begin{align*}
\Vert wuV\cdot r\Vert_{L^1(I, H^\mu)_{ul}}&=\sup_{k\in \xZ^d}\int_0^T\Vert \chi_k wu(\sigma)V(\sigma)\cdot r\Vert_{H^\mu}d\sigma\\
&\le C\sup_{k\in \xZ^d}\int_0^T\Vert wu(\sigma)\Vert_{H^{s_0}_{ul}}\Vert \widetilde\chi_k V(\sigma)\Vert_{H^\mu}d\sigma,
\end{align*}
from which the estimate \eqref{transport} follows.
\end{proof}
\addcontentsline{toc}{section}{Bibliography}

\end{document}